\newtheorem{theorem}{Theorem}[section]
\newtheorem{lemma}[theorem]{Lemma}
\theoremstyle{definition}
\newtheorem{definition}[theorem]{Definition}
\theoremstyle{remark}
\newtheorem{remark}[theorem]{Remark}
\theoremstyle{corollary}
\newtheorem{corollary}[theorem]{Corollary}
\theoremstyle{proposition}
\newtheorem{proposition}[theorem]{Proposition}
\newcommand{\N}{\mathbb{N}}
\newcommand{\M}{\mathbb{M}}
\newcommand{\D}{\mathbb{D}}
\newcommand{\R}{\mathbb{R}}
\newcommand{\E}{\mathbb{E}}
\newcommand{\F}{\mathcal{F}}
\newcommand{\DEF}{\mathrel{\mathrel{\mathop:}=}}
\begin{document}

\title[Functional limit theorem  for integrals driven by some L\'{e}vy processes]{A functional limit theorem for stochastic integrals driven by a time-changed symmetric $\alpha$-stable L\'{e}vy process \thanks{Supported
    by the Institute of Mathematical Statistics (IMS) and the Bernoulli
    Society.}} 
    
\date{\today}

\author[E. Scalas]{Enrico~Scalas}
\address{DISiT - Universit\`a del Piemonte Orientale, viale Michel 11, I15121, Italy and BCAM - Basque Center for Applied Mathematics, Mazarredo 14, E48009 
Bilbao, Basque Country - Spain.}
\email  {enrico.scalas@unipmn.it}

\author[N. Viles]{No\`{e}lia~Viles}
\address{BCAM - Basque Center for Applied Mathematics, Mazarredo 14, E48009 
Bilbao, Basque Country - Spain.} 
\email{nviles@bcamath.org}

\begin{abstract}
Under proper scaling and distributional assumptions, we prove the convergence in the Skorokhod space endowed with the $M_1$-topology of 
a sequence of stochastic integrals of a deterministic function driven by a time-changed symmetric $\alpha$-stable L\'{e}vy process. The time
change is given by the inverse $\beta$-stable subordinator.
\end{abstract}

\keywords{Skorokhod space; $J_1$-topology; $M_1$-topology; Fractional Poisson process; 
Stable subordinator; Inverse stable subordinator; Renewal process; Mittag-Leffler waiting time; 
Continuous time random walk; Functional Limit Theorem} 

\subjclass{Primary 60G50, 60F17; 33E12; 26A33.} 

\maketitle                                      


\section{Introduction}\label{introduction}

\noindent The main motivation of the paper comes from the physical model studied in \cite{S11}. That paper concerns a damped harmonic oscillator subject to a random force. The random force is usually modeled by Gaussian white noise assuming a large number of independent interactions bounded in time and in their strength. However, in many situations of practical interest, the force is a L\'evy process with heavy-tails in its distribution (for instance, an $\alpha$-stable L\'{e}vy process).

In this basic model, the equation of motion is informally given by
\begin{equation}\label{stringeq}
\ddot{x}(t)+\gamma\dot{x}(t)+kx(t)=\xi(t),
\end{equation}
where $x(t)$ is the position of the oscillating particle with unit mass at time $t$, $\gamma>0$ is the damping  coefficient, $k>0$ is the spring constant and $\xi(t)$ represents white L\'evy noise.
This noise term is a formal derivative of a symmetric $\alpha$-stable L\'evy process $L_{\alpha}(t)$. 

The formal solution of Equation (\ref{stringeq}) is
\begin{equation}\label{stringeqsolx}
x(t)=F(t)+\int_{-\infty}^{t}G(t-t')\xi(t')dt',
\end{equation}
where $G(t)$ is the Green function for the corresponding homogeneous equation. It is given by
\begin{equation}\label{Gt}
G(t)=
\left\{
\begin{array}{cc}
  \frac{\exp(-\gamma t/2)}{\sqrt{\omega^2-\gamma^2/4}}\sin(\sqrt{\omega^2-\gamma^2/4}t)  & \omega=\sqrt{k}>\gamma/2,\\
   t\exp(-\gamma t/2)  &  \omega=\gamma/2 ,\\
    \frac{\exp(-\gamma t/2)}{\sqrt{\gamma^2/4-\omega^2}}\sinh(\sqrt{\gamma^2/4-\omega^2}t)  &   \omega<\gamma/2,
\end{array}
\right.
\end{equation} 
and $F(t)$ is a decaying function (a solution of the homogeneous equation under given initial conditions). The solution for the velocity component can be written as 
\begin{equation}\label{stringeqsolv}
v(t)=F_v(t)+\int_{-\infty}^{t}G_v(t-t')\xi(t')dt',
\end{equation}
where $F_v(t)=\frac{d}{dt}F(t)$ and $G_v(t)=\frac{d}{dt}G(t)$ is the Green function for the equation satisfied by the velocity $v(t)$.

In stationary situations the functions $F(t)$ and $F_v(t)$ vanish. Notice that the Green function for Equation (\ref{stringeq}) is bounded for all $t\geqslant 0$. It can be obtained using Laplace transform methods.

Our idea is to replace the white noise in Equation \eqref{stringeq} first with a sequence of instantaneous shots of random amplitude at
random times and then with an appropriate functional limit of this process. We can express the sequence of random instantaneous shots
at random times in terms of the formal derivative of a compound renewal process,  a random walk subordinated
to a counting process called continuous-time random walk (CTRW) by physicists.
As an example, consider the sequence $\{Y_i\}_{i\in\N}$ of i.i.d. symmetric $\alpha$-stable random variables that represent the force jumps, with $\alpha\in (0,2]$.  Let  $\{J_i\}_{i\in \N}$ denote the waiting times between two jumps.  Assume that $\{J_i\}_{i\in \N}$ are i.i.d. and positive such that $J_1$ belongs to the strict domain of attraction of some stable random variables with index $\beta\in (0,1]$. The sum $T_n=\sum_{i=1}^n J_i$ represents the time or {\em epoch} of the $n$-th jump and 
\begin{equation}
\label{counting}
N_{\beta}(t)=\max\{n:T_n\leqslant t\}
\end{equation}
is the number of jumps up to time $t>0$. Then, the compound renewal process is given by
\begin{equation}
\label{ctrw}
X(t) = \sum_{i=1}^{N_\beta (t)} Y_i = \sum_{i \geqslant 1} Y_i \textbf{1}_{\{T_i \leqslant t\}},
\end{equation}
whose paths we take right continuous and with left limits; the corresponding white noise can be formally written as
\begin{equation}
\label{ctrwwn}
\Xi (t) = d X(t)/dt =  \sum_{i=1}^{N_\beta (t)} Y_i \delta(t - T_i) = \sum_{i \geqslant 1} Y_i \delta(t-T_i) \textbf{1}_{\{T_i \leqslant t\}}.
\end{equation}

A natural way to study the convergence of stochastic processes whose paths are right continuous with left limits (c\`{a}dl\`{a}g in french) is by using the Skorokhod topology. The Skorokhod space provides a suitable formalism for the description of stochastic processes with jumps, such as Poisson processes, L\'{e}vy processes, martingales and semimartingales. Since the processes which we will study are semimartingales, we will consider the Skorokhod space endowed with the convenient topology. This topology allows us to move the jumps of the approximants to the times of the jumps of the limit process by using changes of time. 

Our goal is to study the convergence of the integral of a deterministic continuous and bounded function with respect to a properly rescaled 
continuous time random walk. In particular, we aim to prove that under a proper scaling and distributional assumptions, it follows that 
\begin{equation}\label{motivationlimit}
\left\{\sum_{i=1}^{N_{\beta}(nt)}G_v\left(t-\frac{T_i}{n}\right)\frac{Y_i}{n^{\beta/\alpha}}\right\}_{t\geqslant 0}\overset{M_1-top}{\Rightarrow}\left\{\int_0^{t}G_v(t-s)dL_{\alpha}(D^{-1}_{\beta}(s))\right\}_{t\geqslant 0},
\end{equation}
and
\begin{equation}\label{motivationlimitbis}
\left\{\sum_{i=1}^{N_{\beta}(nt)}G \left(t-\frac{T_i}{n}\right)\frac{Y_i}{n^{\beta/\alpha}}\right\}_{t\geqslant 0}\overset{M_1-top}{\Rightarrow}\left\{\int_0^{t}G (t-s)dL_{\alpha}(D^{-1}_{\beta}(s))\right\}_{t\geqslant 0},
\end{equation}
when $n\to+\infty$, in the Skorokhod space $D([0,+\infty),\R)$ endowed with the $M_1$-topology, where $L_{\alpha}(D^{-1}_{\beta}(t))$ is a time-changed L\'{e}vy process $L_{\alpha}$ with respect to the functional inverse $D_{\beta}^{-1}(t)$ of a $\beta$-stable subordinator denoted by $D_{\beta}(t)$ and that we will define in the next sections.

The paper is structured as follows. In Section 2, we define the continuous time random walk and also the compound fractional Poisson process. Since we are interested in the convergence of stochastic processes in Skorokhod spaces we need to introduce the definition of these spaces and the notion of convergence in the appropriate topologies. This is done in Section 3, where we introduce the Skorokhod space with the two most useful topologies on it, called $J_1$ and $M_1$ topologies. In Section 4 we state and prove a functional limit theorem for the case in which the jumps are i.i.d. strictly stable random variables of index $\alpha\in(1,2]$ and waiting times are in the domain of attraction of stable subordinator of index $\beta\in(0,1)$.  We also extend the functional limit theorem to general strictly stable processes of index $\alpha\in(0,2]$. Finally, in Section 5, we discuss some possible extensions of our result. 

\section{Continuous time random walks}
The continuous time random walk (CTRW) was introduced by Montroll and Weiss \cite{MW} in 1965. As its name suggests, the CTRW generalizes the discrete random walk  imposing a random waiting time between particle jumps.  The random walk is a stochastic process given by a sum of independent and identically distributed (i.i.d.) random variables.  A CTRW is a pure jump process given by a sum of i.i.d. random jumps  $(Y_i)_{i\in \N}$ separated by i.i.d.  random waiting times (positive random variables) $(J_i)_{i\in \N}$. Throughout this paper, we shall further assume that for a given $i$, $Y_i$ and $J_i$ are independent random variables.
CTRWs became a widely used tool for describing random process that appear in a large variety of physical models and in finance 
(\cite{M00, M03, Masol01, Masol3, MW, R02, Sab02, S00}). In physics, they are often used to model anomalous diffusion and their one-point distribution 
may exhibit heavy tails. 

As mentioned in the Introduction, CTRWs can be formally defined as random walks subordinated to a counting renewal process. To do this, it is necessary to introduce the random walk  and the counting process. Note that, in defining these processes, it is not necessary to refer to their
meaning in Equations \eqref{stringeq} and \eqref{ctrwwn}, but we can use the interpretation in terms of a diffusing particle on the real line,
in order to help intuition.

Let $X_n=\sum_{i=1}^n Y_i$ denote the position of a diffusing particle after $n$ jumps and $T_n=\sum_{i=1}^nJ_i$ be the epoch of the $n$-th jump. The corresponding counting process $N(t)$ is defined by 
\begin{equation}
\label{generalcounting}
N(t)\overset{\textrm{def}}{=}\max\{n: \; T_n \leqslant t\}.
\end{equation}
Then the position of a particle at time $t>0$ can be expressed as the sum of the jumps up to time $t$
\begin{equation}
\label{generalctrw}
X(t)=X_{N(t)}\overset{\textrm{def}}{=}\sum_{i=1}^{N(t)} Y_i.
\end{equation}
From this expression, we can see that the limiting behavior of the CTRW depends on the distribution of its jumps and its waiting times. 

\subsection{Compound Fractional Poisson Process}
In the Introduction, we mentioned a white-noise force formally defined as the first derivative of special CTRWs. An important
instance of such CTRWs is the compound fractional Poisson process.
Consider a CTRW whose i.i.d. jumps $\{Y_i\}_{i\in \N}$ have symmetric $\alpha$-stable distribution with $\alpha \in (0,2]$, and whose i.i.d waiting times $\{J_i\}_{i\in \N}$ satisfy
\begin{equation}\label{pbeta0t}
\mathbb{P}(J_i>t)=E_{\beta}(-t^{\beta}),
\end{equation} 
for $\beta\in (0,1]$, where 
\[E_{\beta}(z)=\sum_{j=0}^{+\infty}\frac{z^j}{\Gamma(1+\beta j)},\]
denotes the Mittag-Leffler function. If $\beta=1$, the waiting times are exponentially distributed with parameter $\mu =1$ and the counting
process \eqref{generalcounting} is the Poisson process. 
The counting process associated to the renewal process defined by (\ref{pbeta0t}) is called the fractional Poisson process which is a generalization of the Poisson process
\[N_{\beta}(t)=\max\{n : \; T_n\leqslant t\}.\]
It was introduced and studied by Repin and Saichev \cite{RS00}, Jumarie \cite{Jumarie01}, Laskin \cite{Laskin03}, Mainardi et al. \cite{MGS04,MGV07}, Uchaikin et al. \cite{UCS08} and Beghin and Orsingher \cite{BO09, BO10}. 
For $\beta\in (0,1)$, the fractional Poisson process is semi-Markov. However, the process $N_{\beta}(t)$ is not Markov as the waiting times $J_i$ are not distributed exponentially. 
It is not L\'{e}vy because its distribution is not infinitely
divisible\[ \mathbb{P}(N_{\beta}(t)=n)=\frac{t^{\beta n}}{n!}E_{\beta}^{(n)}(-t^{\beta}),\]
where $E_{\beta}^{(n)}(-t^{\beta})$ denotes the $n$-th derivative of $E_{\beta}(z)$ evaluated at the point $z=-t^{\beta}$.

If we subordinate a random walk to the fractional Poisson process, we obtain the compound fractional Poisson process, which is not Markov
\begin{equation}
\label{cfpp}
X_{N_{\beta}(t)}=\sum_{i=1}^{N_{\beta}(t)} Y_i.
\end{equation}
As already mentioned, we are interested in the limiting behaviour of stochastic integrals driven by this process \cite{GPSS09}.

\section{Convergence in the Skorokhod space}\label{Skortops}

The Skorokhod space, denoted by $\D = D([0, T] , \R)$  (with $T>0$), is the space of real functions $x : [0, T] \to \R$
that are right-continuous and have left limits:
\begin{enumerate}
\item For $t \in [0, T)$, $x(t+)=\lim_{s\downarrow t}x(s)$ exists and $x(t+)=x(t)$. 
\item For $t \in (0, T]$, $x(t-)=\lim_{s\uparrow t}x(s)$ exists. 
\end{enumerate}
Functions satisfying these properties are called c\`adl\`ag (a French acronym for \textsl{continu \`{a} droite, limite \`{a} gauche}) functions. 

There is a metrizable topology on $\D$, called the Skorokhod topology, for which this space is Polish and for which convergence of sequences can be characterized as follows.  
Actually, Skorokhod \cite{Skorokhod} proposed four metric separable topologies on $\D$, denoted by $J_1$, $J_2$, $M_1$ and $M_2$. The definition of these topologies is not very intuitive at a first sight. The compact sets of the Skorokhod space are characterized by applying Arzel\`{a}-Ascoli theorem. 

In the space of continuous functions, two functions $x$ and $y$ are close to each other in the uniform topology if the graph of $x(t)$ can be carried onto the graph of $y(t)$ by uniformly small perturbation of the ordinates, with the abscissae kept fixed. In $\D$, we shall allow also a uniformly small deformation of the time scale. The topologies devised by Skorokhod embody this idea.

Let $\Lambda$ denote the class of strictly increasing, continuous mappings of $[0,T]$ onto itself. If 
$\lambda\in \Lambda$, then $\lambda(0)=0$ and $\lambda(T)=T$. For $x,y\in \D$, let us define the distance $d_{J_1}(x,y)$ to be the infimum of those positive $\varepsilon$ for which there exists in $\Lambda$ a $\lambda$ such that
\begin{equation}\label{dj1eq1}
\sup_{t\in[0,T]}|\lambda(t)-t|\leqslant \varepsilon,
\end{equation}
 and
 \begin{equation}\label{dj1eq2}
\sup_{t\in[0,T]}|x(t)-y(\lambda(t))|\leqslant \varepsilon.
\end{equation}
That is, 
\begin{equation}\label{dj1}
d_{J_1}(x,y):=\inf_{\lambda\in \Lambda}\{\sup_{t\in[0,T]}|\lambda(t)-t|,\sup_{t\in[0,T]}|x(t)-y(\lambda(t))|\}
\end{equation} 
This metric is defined in the book by Billingsley (p.111, \cite{billingsley68}). The induced topology is called Skorokhod's $J_1$-topology.
It is now possible to define convergence in this topology.

\begin{definition}[$J_1$-topology] 
The sequence $x_n(t)\in \D$ converges to $x_0(t)\in \D$ in the $J_1-$topology if there exists a sequence of increasing homeomorphisms $\lambda_n:[0,T]\to[0,T]$ such that  
\begin{equation}\label{convJ1}
\sup_{t\in[0,T]}|\lambda_n(t)-t|\to 0, \sup_{t\in[0,T]}|x_n(\lambda_n(t))-x_0(t)|\to 0, 
\end{equation}
as $n\to \infty$.
\end{definition}
Among Skorokhod's topologies, the $J_1$-topology is the finest and the closest one to the uniform topology. If $x_0$ is continuous, the convergence in $J_1$-topology is equivalent to the uniform convergence. The space $\D$ endowed  with $J_1$-topology is complete,  as Kolmogorov showed in  \cite{Kolmogorov56}. 

The $M_1$-topology on $\D$ is generated by the metric $d_{M_1}$ defined by means of completed graphs. For $x\in \D$, the completed graph of $x$ is the set
\[\Gamma_x^{(a)}=\{(t,z)\in[0,T]\times\R:z=a x(t-)+(1-a)x(t)\; \textrm{for}\; \textrm{some} \; a\in[0,1]\},\]
where $x(t-)$ is the left limit of $x$ at $t$ and $x(0-):=x(0)$. A parametric representation of the completed graph $\Gamma_x^{(a)}$ is a continuous nondecreasing function $(r,u)$ mapping $[0,1]$ onto $\Gamma_x^{(a)}$, with $r$ being the time component and $u$ being the spatial component. Let $\Pi(x)$ denote the set of parametric representations of the graph $\Gamma_x^{(a)}$. For $x_1,\; x_2\in\D$ we define the $M_1$-metric on $\D$ as
\begin{equation}\label{dm1}
d_{M_1}(x_1,x_2):=\inf_{\substack{(r_i,u_i)\in \Pi(x_i)\\
i=1,2}}\{ \|r_1-r_2\|_{[0,T]}\vee \|u_1-u_2\|_{[0,T]} \},
\end{equation} 
where $w_1\vee w_2:=\max\{w_1,w_2\}$ for $w_1,\,w_2\in \R$, and $\|u\|_{[0,T]}:=\sup_{t\in[0,T]}\{|u(t)|\}$.

Notice that $\|r_1-r_2\|_{[0,T]}\vee ||u_1-u_2||_{[0,T]} $ can also be written as $\|(r_1,u_1)-(r_2,u_2)\|_{[0,T]}$ where
\begin{align}\label{normm1}
\|(r_1,u_1)-(r_2,u_2)\|_{[0,T]}&:=\sup_{t\in [0,T]}\{|(r_1(t),u_1(t))-(r_2(t),u_2(t))|\}
\nonumber\\&=\sup_{t\in [0,T]}\{|r_1(t)-r_2(t)|\vee |u_1(t)-u_2(t))|\}.
\end{align}
Then we have the following equivalent expression for the $M_1$-metric
\begin{equation}\label{dm11}
d_{M_1}(x_1,x_2)=\inf_{\substack{(r_i,u_i)\in \Pi(x_i)\\
i=1,2}}\left\{ \sup_{t\in [0,T]}\{|r_1(t)-r_2(t)|\vee |u_1(t)-u_2(t))|\} \right\}.
\end{equation}
This metric induces a topology, called Skorokhod's $M_1$-topology which is weaker than the $J_1$-topology.
One of the advantages of the $M_1$-topology is that it allows for a jump in the limit function $x\in\D$ to be approached by multiple jumps in the converging functions $x_n\in\D$. The convergence in this topology is defined as follows.

\begin{definition}[$M_1$-topology] The sequence $x_n(t)\in \D$ converges to  $x_0(t)\in\D$ in the $M_1$-topology if
\begin{equation}\label{convM11}
\lim_{n\to+\infty}d_{M_1}(x_n(t),x_0(t))=0.
\end{equation} 
In other words, we have the convergence in $M_1$-topology if there exist parametric representations $(y(s), t(s))$ of the graph $\Gamma_{x_0(t)}$ and $(y_n(s), t_n(s))$ of the graph $\Gamma_{x_n(t)}$ such that
\begin{equation}\label{convM1}
\lim_{n\to\infty}\|(y_n,t_n)-(y,t)\|_{[0,T]}=0.
\end{equation}
\end{definition}

In our problem, it will be convenient to consider the function space $D([0,+\infty),\R)$ with domain $[0,+\infty)$ instead of the compact domain $[0,T]$. In that setting, we can equivalently define convergence of $x_n\in D([0,+\infty),\R)$ to $x_0\in D([0,+\infty),\R)$  as $n\to\infty$ with some topology ($J_1$-topology or $M_1$-topology) to be convergence $x_n\in D([0,T],\R)$ to $x_0\in D([0,T],\R)$  as $n\to\infty$ with that topology for the restrictions of $x_n$ and $x_0$ to $[0,T]$ for $T=t_k$ for each $t_k$ in some sequence $\{t_k\}_{k\geqslant 1}$ with $t_k\to +\infty$ as $k\to+\infty$, where $\{t_k\}$ can depend on $x_0$. For the $J_1$-topology (see \cite{JS,Stone, whitt02}), it suffices to let $t_k$ be the continuity points of the limit function $x_0$. Now we will discuss the case of $M_1$-topology,  but the extension is valid to the other non-uniform topologies as well.

Let $\rho_t:D([0,+\infty),\R)\rightarrow D([0,+\infty),\R)$ be the restriction map with $\rho_t(x)(s)=x(s)$, $0\leqslant s\leqslant t$. Suppose that $g:D([0,+\infty),\R)\rightarrow D([0,+\infty),\R)$ and $g_t:D([0,t],\R)\rightarrow D([0,t],\R)$ for $t>0$ are functions with
\begin{equation}\label{eqfrest}
g_t(\rho_t(x))=\rho_t(g(x)),
\end{equation}
for all $x\in D([0,+\infty),\R)$ and all $t>0$. We then call the functions $g_t$ restrictions of the function $g$.
We have the following result about the continuity of the function $g$ in that topology:
\begin{theorem}[Theorem 12.9.1. \cite{whitt02}] Suppose that $g:D([0,+\infty),\R)\rightarrow D([0,+\infty),\R)$ has continuous restrictions $g_t$ with some topology for all $t>0$. Then $g$ itself is continuous in that topology.
\end{theorem}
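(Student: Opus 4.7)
The plan is to reduce continuity of $g$ on $D([0,+\infty),\R)$ to continuity of the restrictions $g_t$ on $D([0,t],\R)$ via the characterization of convergence on the non-compact domain in terms of convergence of restrictions. Concretely, I would take a sequence $x_n\to x_0$ in $D([0,+\infty),\R)$ in the given topology and attempt to prove that $g(x_n)\to g(x_0)$ by showing $\rho_{t_k}(g(x_n))\to \rho_{t_k}(g(x_0))$ in $D([0,t_k],\R)$ for a suitable sequence $t_k\uparrow +\infty$. The intertwining identity $g_t\circ \rho_t = \rho_t\circ g$ is the key algebraic bridge: it converts convergence of restricted inputs under $g_t$ into convergence of restrictions of $g(x_n)$.

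The first step is to pick the sequence $\{t_k\}_{k\geqslant 1}$. As explained in the excerpt, for the $M_1$-topology (and likewise for $J_1$), convergence in $D([0,+\infty),\R)$ is equivalent to convergence of the restrictions to $[0,t_k]$ for some sequence $t_k\to +\infty$ that may depend on the limit function. Because every c\`adl\`ag function has at most countably many discontinuities, I can choose $t_k\to+\infty$ that are simultaneously good points (continuity, or more precisely, points where no jumps cluster from either side) for both $x_0$ and $g(x_0)$. This double choice is what allows me to both pull the convergence $x_n\to x_0$ down to $\rho_{t_k}(x_n)\to \rho_{t_k}(x_0)$ and to lift the convergence $\rho_{t_k}(g(x_n))\to \rho_{t_k}(g(x_0))$ back up to $g(x_n)\to g(x_0)$.

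The second step is the chain of implications. Fix $k$. Since $t_k$ is a good point for $x_0$, $\rho_{t_k}(x_n)\to \rho_{t_k}(x_0)$ in $D([0,t_k],\R)$. By the hypothesized continuity of $g_{t_k}$, this gives
\begin{equation*}
g_{t_k}(\rho_{t_k}(x_n))\longrightarrow g_{t_k}(\rho_{t_k}(x_0))\quad\text{in }D([0,t_k],\R).
\end{equation*}
Applying the restriction identity \eqref{eqfrest} on both sides rewrites this as $\rho_{t_k}(g(x_n))\to \rho_{t_k}(g(x_0))$. Since this holds for every $k$, and since the $t_k$ were chosen to be a valid lifting sequence for the limit $g(x_0)$, the characterization of convergence on $[0,+\infty)$ yields $g(x_n)\to g(x_0)$ in $D([0,+\infty),\R)$, which is what we needed.

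The main obstacle, and the only subtle point, is the joint choice of $\{t_k\}$: one needs a sequence that is simultaneously a valid lifting sequence for $x_0$ (so that the hypothesis $x_n\to x_0$ translates into restricted convergence) and for $g(x_0)$ (so that restricted convergence of $g(x_n)$ lifts back to the full interval). The rest is a formal manipulation with the intertwining identity. If one prefers to avoid explicit selection of continuity points, one can equivalently argue by contradiction: if $g(x_n)\not\to g(x_0)$, then restricted convergence must fail on some $[0,t]$ for $t$ in an unbounded set, contradicting the continuity of $g_t$ for that $t$ chosen to be a good point of $g(x_0)$.
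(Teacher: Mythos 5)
Your argument is correct, and in fact it is essentially the proof given in Whitt's book for this result; the paper itself states the theorem as a quoted result (Theorem 12.9.1 of \cite{whitt02}) without reproducing a proof, so there is nothing in the paper to compare against beyond the citation. You correctly isolate the one genuine subtlety --- choosing $\{t_k\}$ outside $\mathrm{Disc}(x_0)\cup\mathrm{Disc}(g(x_0))$, which is possible since both discontinuity sets are countable --- and the rest follows from the intertwining identity \eqref{eqfrest} exactly as you describe.
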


We consider the extension of Lipschitz properties to subsets of $D([0,+\infty),\R)$. For this purpose, suppose that $d_{M_1,t}$ is the $M_1$-metric on  $D([0,t],\R)$ for $t>0$. An associated metric $d_{M_1,\infty}$ on $D([0,+\infty),\R)$ can be defined by 
\begin{equation}\label{intmu}
d_{M_1,\infty}(x_1,x_2)=\int_0^{+\infty} e^{-t}(d_{M_1,t}(\rho_t(x_1),\rho_t(x_2))\wedge 1)dt.
\end{equation} 
The integral of (\ref{intmu}) is well defined and can be used for the next result:
\begin{theorem}[Theorem 12.9.2. \cite{whitt02}] Let $d_{M_1,t}$ be the $M_1$-metric on $D([0,t],\R)$. For all $x_1,x_2\in D([0,+\infty),\R)$, $d_{M_1,t}(x_1,x_2)$ as a function of $t$ is right-continuous with left limits in $(0,+\infty)$ and has a right limit at $0$. Moreover, $d_{M_1,t}(x_1,x_2)$ is continuous at $t>0$ whenever $x_1$ and $x_2$ are both continuous at $t$.
\end{theorem}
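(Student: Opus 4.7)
The plan is to work directly from the definition of $d_{M_1,t}$ via parametric representations of completed graphs. Throughout, write $\Gamma_{x,t}$ for $\Gamma_{\rho_t(x)}^{(a)}$. The guiding observation is the nesting $\Gamma_{x,s_1}\subset\Gamma_{x,s_2}$ for $s_1<s_2$: the added piece $\Gamma_{x,s_2}\setminus\Gamma_{x,s_1}$ lies in the time-slab $[s_1,s_2]\times\R$, with spatial diameter controlled by the oscillation of $x$ on $[s_1,s_2]$ together with the linear segments that complete any jumps of $x$ at the endpoints. Because $x$ is c\`{a}dl\`{a}g, this diameter tends to $0$ as $|s_2-s_1|\to 0$ whenever one approaches a point of continuity of $x$, and in general is governed by the left and right limits of $x$ there.

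For right-continuity at $t\geqslant 0$, fix $\varepsilon>0$ and take near-optimal parametric representations $(r_i^t,u_i^t)\in\Pi(\rho_t(x_i))$ realising $d_{M_1,t}(x_1,x_2)+\varepsilon$. For $s>t$ I extend each to a parametric representation $(r_i^s,u_i^s)\in\Pi(\rho_s(x_i))$ by reparametrising $(r_i^t,u_i^t)$ onto $[0,1/2]$ and using $[1/2,1]$ to sweep out the extra piece $\Gamma_{x_i,s}\setminus\Gamma_{x_i,t}$. On $[0,1/2]$ the uniform norm of the pairwise difference is unchanged; on $[1/2,1]$ the $r$-components differ by at most $s-t$, and the $u$-components by at most $2\max_i\sup_{u\in[t,s]}|x_i(u)-x_i(t)|$, which tends to $0$ as $s\downarrow t$ by right-continuity. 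Hence $\limsup_{s\downarrow t}d_{M_1,s}(x_1,x_2)\leqslant d_{M_1,t}(x_1,x_2)$. The reverse inequality is obtained by the truncation procedure described in the last paragraph, which yields $d_{M_1,t}(x_1,x_2)\leqslant d_{M_1,s}(x_1,x_2)+o(1)$ as $s\downarrow t$. The existence of a right limit at $t=0$ is the same argument applied at $t=0$, noting that $\Gamma_{x_i,0}=\{(0,x_i(0))\}$.

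For left limits at $t>0$ I run a Cauchy argument on $s\uparrow t$. For $s_1<s_2<t$, the same extension and truncation give
\[|d_{M_1,s_2}(x_1,x_2)-d_{M_1,s_1}(x_1,x_2)|\leqslant (s_2-s_1)+2\max_i\sup_{u\in[s_1,s_2]}|x_i(u)-x_i(t-)|,\]
which vanishes as $s_1,s_2\uparrow t$ by the existence of the left limits $x_i(t-)$. This furnishes the left limit at $t$. When both $x_1$ and $x_2$ are continuous at $t$, so that $x_i(t-)=x_i(t)$, the analogous estimate with $s_2=t$ gives $|d_{M_1,s}(x_1,x_2)-d_{M_1,t}(x_1,x_2)|\to 0$ as $s\uparrow t$, and combined with right-continuity this is the asserted continuity of $s\mapsto d_{M_1,s}(x_1,x_2)$ at $t$.

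The principal technical obstacle is the truncation step needed for the lower bound. Given near-optimal $(r_i^s,u_i^s)\in\Pi(\rho_s(x_i))$ for $d_{M_1,s}(x_1,x_2)$, the two representations in general attain any given intermediate time level $t<s$ at distinct parameter values $\lambda_1,\lambda_2\in[0,1]$, so a naive cut either fails to cover $\Gamma_{x_i,t}$ or destroys the pairwise uniform bound. The remedy is a \emph{pause} construction: in whichever representation reaches the level $r=t$ earlier, insert a time-constant continuation whose $u$-coordinate traverses the segment from its current value to $x_i(t)$ while the $r$-coordinate is held at $t$. After rescaling to $[0,1]$ one obtains genuine parametric representations of $\Gamma_{x_i,t}$, and the inserted portion contributes at most $O(\mathrm{diam}(\Gamma_{x_i,s}\setminus\Gamma_{x_i,t}))$ to the uniform error, which vanishes in the relevant limit. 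Jump contributions are absorbed automatically because the completed graph already contains the chord from $x_i(u-)$ to $x_i(u)$ at each jump time $u$.
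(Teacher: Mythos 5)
The paper does not prove this statement at all: it is quoted verbatim as background from Whitt's book (Theorem 12.9.2 of \cite{whitt02}), so there is no internal proof to compare against. Judged on its own, your argument is essentially correct and follows the standard route (and, in substance, Whitt's own): exploit the nesting $\Gamma_{\rho_{s_1}(x)}^{(a)}=\Gamma_{\rho_{s_2}(x)}^{(a)}\cap([0,s_1]\times\R)$, extend near-optimal parametric representations forward to get $\limsup_{s\downarrow t}d_{M_1,s}\leqslant d_{M_1,t}$, and truncate via the pause construction to get the reverse inequality; the same two estimates give the Cauchy bound for left limits and the continuity statement at continuity points. Your identification of the truncation as the real technical issue, and the resolution by holding the earlier-arriving representation at time level $t$ while the other catches up, is exactly right: the held $r$-coordinate stays within $d_{M_1,s}+\veps$ of the other because the original pair was uniformly close, and the $u$-discrepancy is controlled by the uniform closeness plus the diameter of $\Gamma_{\rho_s(x_i)}^{(a)}\setminus\Gamma_{\rho_t(x_i)}^{(a)}$, which vanishes by right-continuity. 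One small imprecision: on the swept-out piece $[1/2,1]$ of the extension, the $u$-components do not differ merely by $2\max_i\sup_{u\in[t,s]}|x_i(u)-x_i(t)|$; you must also carry the term $|x_1(t)-x_2(t)|$, since the two extensions start from different spatial points. This is harmless because the endpoint values $(t,x_i(t))$ belong to the original representations, so $|x_1(t)-x_2(t)|\leqslant d_{M_1,t}(x_1,x_2)+\veps$, and the resulting bound $\max\bigl(d_{M_1,t}+\veps,\,(s-t)\vee(d_{M_1,t}+\veps+2\max_i\sup_{u\in[t,s]}|x_i(u)-x_i(t)|)\bigr)$ still collapses to $d_{M_1,t}+\veps$ in the limit $s\downarrow t$; the analogous correction applies in the left-limit estimate. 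With that repair the proof is complete.
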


Finally, we conclude this discussion with the following characterization of $M_1$-convergence in the domain $[0,+\infty)$.

\begin{theorem}[Theorem 12.9.3. \cite{whitt02}] 
Suppose that $d_{M_1,\infty}$ and $d_{M_1,t}$, $t>0$ are the $M_1$-metrics on  $D([0,+\infty),\R)$ and $D([0,t],\R)$. Then, the following are equivalent for $x$ and $x_n$, $n\geqslant 1$, in $D([0,+\infty),\R)$:
\begin{enumerate}[(i)]
\item $d_{M_1,\infty}(x_n,x)\rightarrow 0$ as $n\to +\infty$;
\item $d_{M_1,t}(\rho_t(x_n),\rho_t(x))\rightarrow 0$ as $n\to +\infty$ for all $t\notin \mathrm{Disc}(x)$, with 
\[\mathrm{Disc}(x):=\{t\in[0,T]: \; x(t-)\neq x(t)\}\] denoting the set of discontinuities of $x$;
\item there exist parametric representations $(r,u)$ and $(r_n,u_n)$ of $x$ and $x_n$ mapping $[0,+\infty)$ into the graphs such that 
\begin{equation}
\|r_n-r\|_{[0,t]}\vee \|u_n-u\|_{[0,t]}\to 0 \qquad as \; n\to +\infty,
\end{equation}
for each $t>0$.
\end{enumerate} 
\end{theorem}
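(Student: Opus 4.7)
The plan is to establish the cyclic chain $(iii) \Rightarrow (ii) \Rightarrow (i) \Rightarrow (iii)$, leaning on the finite-interval $M_1$ theory and on Theorem 12.9.2 recorded above.

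For $(iii) \Rightarrow (ii)$, suppose we have parametric representations $(r,u)$ of $x$ and $(r_n,u_n)$ of $x_n$, mapping $[0,+\infty)$ into the respective graphs, with $\|r_n-r\|_{[0,t]}\vee \|u_n-u\|_{[0,t]}\to 0$ on every compact $[0,t]$. Fix $t\notin \mathrm{Disc}(x)$. Because $t$ is a continuity point, the graph $\Gamma_x^{(a)}$ contains no vertical segment above $\{t\}$, so restricting the parameter to the pre-image of $[0,t]$ and renormalizing affinely onto $[0,1]$ gives a valid parametric representation of $\rho_t(x)$; the analogous restriction for $(r_n,u_n)$, combined with the fact that $r_n\to r$ uniformly on $[0,t]$, furnishes parametric representations of $\rho_t(x_n)$ whose pre-images of $[0,t]$ converge to that of $(r,u)$. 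Uniform convergence on $[0,t]$ then yields $d_{M_1,t}(\rho_t(x_n),\rho_t(x))\to 0$.

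For $(ii) \Rightarrow (i)$, note that $\mathrm{Disc}(x)$ is at most countable, hence Lebesgue-null, so under $(ii)$ the integrand $f_n(t):=d_{M_1,t}(\rho_t(x_n),\rho_t(x))\wedge 1$ in the definition of $d_{M_1,\infty}$ converges to $0$ for almost every $t>0$. Since $|f_n(t)|\leqslant 1$ and $\int_0^{+\infty}e^{-t}\,dt<+\infty$, the dominated convergence theorem gives $d_{M_1,\infty}(x_n,x)\to 0$. For the reverse preparatory step of $(i)\Rightarrow (ii)$ (used inside the final implication), the same argument in reverse together with Theorem 12.9.2, which tells us that $t\mapsto d_{M_1,t}(\rho_t(x_n),\rho_t(x))$ is right-continuous and is continuous at every point where both $x_n$ and $x$ are continuous, upgrades almost-everywhere pointwise convergence (available from $(i)$ along a subsequence) to convergence at every $t\notin \mathrm{Disc}(x)$.

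The main obstacle is $(i) \Rightarrow (iii)$, where one must produce \emph{globally} consistent parametric representations from what is essentially mean-integrated information. I would first extract, using the preceding paragraph, an increasing sequence $t_k\uparrow+\infty$ of continuity points of $x$ at which $d_{M_1,t_k}(\rho_{t_k}(x_n),\rho_{t_k}(x))\to 0$. On each $[0,t_k]$, choose, by the definition of the $M_1$-metric, parametric representations $(r_n^{(k)},u_n^{(k)})$ of $\rho_{t_k}(x_n)$ and $(r^{(k)},u^{(k)})$ of $\rho_{t_k}(x)$ on $[0,1]$ with $\|r_n^{(k)}-r^{(k)}\|_{[0,1]}\vee \|u_n^{(k)}-u^{(k)}\|_{[0,1]}\to 0$. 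The decisive step is to reparametrize these families so that the representation on $[0,t_k]$ is a restriction (after affine time rescaling) of the representation on $[0,t_{k+1}]$, which can be done because $t_k$ is a continuity point of $x$ and, via Skorokhod representation, of $x_n$ eventually. A diagonal extraction on $n$ and a careful gluing on the annular pieces $[t_{k-1},t_k]$ produces parametric representations $(r_n,u_n),(r,u)$ of $x_n,x$ on $[0,+\infty)$ satisfying $(iii)$.

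The hard part is precisely this gluing: one must simultaneously preserve monotonicity of the time components across the junctions $t_k$, ensure that the spatial components still trace out the full completed graphs, and obtain the uniform convergence on every compact $[0,t]$ rather than only on the selected $[0,t_k]$. The continuity of $x$ at each $t_k$ is what prevents the creation of spurious vertical segments at the join points, while the diagonalization handles the dependence of the rate of convergence on $k$.
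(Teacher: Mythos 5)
This statement is quoted in the paper verbatim as Theorem 12.9.3 of Whitt (2002), as background on the $M_1$-topology; the paper contains no proof of it, so the only meaningful comparison is with the proof in Whitt's book, whose overall architecture your cycle $(iii)\Rightarrow(ii)\Rightarrow(i)\Rightarrow(iii)$ does reproduce. The implication $(ii)\Rightarrow(i)$ by countability of $\mathrm{Disc}(x)$ and dominated convergence is correct and complete, and the outline of $(iii)\Rightarrow(ii)$ is in the right spirit (though note the restricted representations of $\rho_t(x_n)$ and $\rho_t(x)$ live on parameter intervals $[0,s_n^*]$ and $[0,s^*]$ that need not converge to one another when $r$ has a flat stretch at level $t$, so the renormalization step needs an argument).

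The genuine gaps are in the two remaining directions. For $(i)\Rightarrow(ii)$ you obtain only subsequential a.e.\ convergence of $f_n(t):=d_{M_1,t}(\rho_t(x_n),\rho_t(x))$ from the $L^1$ convergence of the integrand, and then appeal to Theorem 12.9.2 to upgrade this to convergence of the full sequence at every $t_0\notin\mathrm{Disc}(x)$. That theorem gives, for each \emph{fixed} pair of paths, continuity of $t\mapsto d_{M_1,t}$ at points where \emph{both} paths are continuous; it provides nothing uniform in $n$, and $x_n$ need not be continuous at or near $t_0$, so the cited continuity need not even apply to $f_n$ there. What is actually required is a quantitative comparison of $d_{M_1,t_0}$ with $d_{M_1,t}$ for $t>t_0$ together with an oscillation bound for $x$ near the continuity point $t_0$ (the content of the lemmas preceding 12.9.3 in Whitt); without such a bound the implication does not follow from what you wrote. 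For $(i)\Rightarrow(iii)$, the ``careful gluing'' across the junctions $t_k$ is not an afterthought but the entire substance of the implication, and you do not carry it out; moreover your one concrete tool there, ``via Skorokhod representation,'' is a non sequitur --- the Skorokhod representation theorem concerns almost-sure realizations of weakly convergent random elements and has no bearing on this deterministic statement --- and $x_n$ need not ``eventually'' be continuous at $t_k$, so the cut points for the $x_n$-representations must instead be extracted from $d_{M_1,t_k}(\rho_{t_k}(x_n),\rho_{t_k}(x))\to 0$ together with continuity of $x$ alone. As it stands the proposal is a correct road map with the two hard implications left unproved.
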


The following theorem provides a useful characterization for the $M_1$-convergence in the case of stochastic processes with jumps:
\begin{theorem}[Theorem 1.6.12 \cite{silvestrov}]\label{thm: characM1}
Let  $\{X_n(t)\}_{t\geqslant 0}$ and $\{X(t)\}_{ t\geqslant 0}$ be stochastic processes. Then,   $\{X_n(t)\}_{t\geqslant 0}$ converges to $\{X(t)\}_{ t\geqslant 0}$ in the $M_1$-topology if the following two conditions are fulfilled:
\begin{enumerate}[(i)]
\item The finite-dimensional distributions of $\{X_n(t)\}_{ t\in A}$ converge in law to the corresponding ones of $\{X(t)\}_{ t\in A}$
, as $n\to+\infty$, where $A$ is a subset of $[0,+\infty)$ that is dense in this interval and contains the point $0$, and
\item the condition on $M_1$-compactness is satisfied
\begin{equation}\label{wslim}
\lim_{\delta\to 0}\limsup_{n\to +\infty} w(X_n,\delta)=0,
\end{equation} 
where the modulus of $M_1$-compactness is defined as
\begin{equation}\label{wssup1}
w(X_n,\delta):=\sup_{t\in A}w(X_n,t,\delta) ,
\end{equation}
with
\begin{equation}\label{wssup2}
w(X_n,t,\delta):=\sup_{0\vee (t-\delta)\leqslant t_1<t_2<t_3\leqslant (t+\delta) \wedge T}\{\|X_n(t_2)-[X_n(t_1),X_n(t_3)]\|\},
\end{equation}
 $[X_n(t_1),X_n(t_3)]:=\{aX_n(t_1)+(1-a)X_n(t_3):\; 0\leqslant a\leqslant 1\}$ denotes the standard segment and $\|\cdot\|$ is the norm defined in (\ref{normm1}).  Notice that the modulus of $M_1$-compactness  plays the same role for c\`{a}dl\`{a}g functions as the modulus of continuity for continuous functions 
\end{enumerate}
\end{theorem}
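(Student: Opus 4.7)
The plan is the standard weak-convergence recipe in the Skorokhod space: first establish tightness of the family $\{X_n\}$ with respect to the $M_1$-topology, and then identify any subsequential weak limit as $X$ via the finite-dimensional convergence along $A$. Prokhorov's theorem together with a ``every subsequence has a further subsequence'' argument will then upgrade the extraction principle to full weak convergence.

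For tightness on $D([0,T],\R)$ I would invoke the $M_1$-analogue of Arzel\`a--Ascoli: a set $K\subset D([0,T],\R)$ has compact closure in the $M_1$-topology iff it is uniformly bounded in the supremum norm and $\sup_{x\in K} w(x,\delta)\to 0$ as $\delta\to 0$, with $w$ given by (\ref{wssup1})--(\ref{wssup2}). Condition (ii) is tailored to the second ingredient: for every $\eta>0$ and every $k\geqslant 1$ pick $\delta_k>0$ and $N_k$ with $\mathbb{P}(w(X_n,\delta_k)>1/k)\leqslant \eta/2^{k+1}$ for all $n\geqslant N_k$, then enlarge $\delta_k$ to absorb the finitely many earlier indices. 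Uniform stochastic boundedness of $\sup_{t\in[0,T]}|X_n(t)|$ follows from condition (i) applied at a finite grid in $A$ (giving tightness of the marginals) combined with the modulus control from (ii) to bound the oscillations between grid points. Intersecting the corresponding events produces a relatively $M_1$-compact subset of $D([0,T],\R)$ carrying mass at least $1-\eta$ uniformly in $n$.

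To pass from $[0,T]$ to $[0,+\infty)$ I would appeal to Theorem 12.9.3 above, which reduces $M_1$-convergence on $[0,+\infty)$ to $M_1$-convergence of restrictions to $[0,t_k]$ for a sequence $t_k\uparrow+\infty$ avoiding the (at most countable) set of fixed discontinuities of the candidate limit; a diagonal argument converts the tightness on each $[0,t_k]$ into tightness on $D([0,+\infty),\R)$. Prokhorov then ensures that every subsequence of $\{X_n\}$ admits a further subsequence $\{X_{n_k}\}$ with $X_{n_k}\Rightarrow Y$ in the $M_1$-topology for some $D$-valued process $Y$. To identify $Y$ with $X$ in law, by the continuous mapping theorem $M_1$-weak convergence implies convergence in distribution of $(X_{n_k}(t_1),\dots,X_{n_k}(t_m))$ at any $t_1,\dots,t_m$ with $\mathbb{P}(Y(t_j-)=Y(t_j))=1$. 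Since the set of such $t$ is cocountable, its intersection $A^{\ast}$ with $A$ remains dense in $[0,+\infty)$ and contains $0$; along $A^{\ast}$ the finite-dimensional distributions of $Y$ agree with those of $X$ by (i). Laws on $D$ are determined by finite-dimensional distributions on any dense set avoiding their fixed jump times, hence $Y\stackrel{d}{=}X$, and the whole sequence converges.

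The main obstacle I expect is the tightness step, specifically bridging between the modulus-of-$M_1$-compactness bound supplied by (ii) and the stochastic boundedness of the global supremum of $X_n$: condition (ii) controls only local oscillations, so the uniform bound on $\|X_n\|_{[0,T]}$ must be extracted from (i) at countably many sample points and then propagated through (ii). A secondary subtlety, genuinely specific to the $M_1$-setting, is that weak $M_1$-convergence does not automatically yield marginal convergence at every fixed time, only at continuity points of the limit law; one must therefore verify that $A$ contains enough such continuity points of $Y$ to pin down its distribution.
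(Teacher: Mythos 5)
First, note that the paper itself offers no proof of this statement: it is imported verbatim as Theorem 1.6.12 of Silvestrov's monograph and used as a black box, so there is no internal argument to compare yours against. Your architecture (tightness in the $M_1$-topology plus identification of subsequential limits through the finite-dimensional distributions on $A$, then Prokhorov and a subsequence-of-subsequences argument, with the passage from $[0,T]$ to $[0,+\infty)$ via the restriction theorem) is the standard and correct skeleton for results of this type, and your handling of the identification step --- intersecting $A$ with the co-countable set of times at which the subsequential limit has no fixed discontinuity --- is exactly right.

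There is, however, one genuine gap, and it sits precisely at the step you identify as the main obstacle, though not for the reason you give. The compactness criterion you invoke --- that $K\subset D([0,T],\R)$ is relatively $M_1$-compact if and only if it is uniformly bounded and $\sup_{x\in K}w(x,\delta)\to 0$ --- is false as stated. Take $x_n=\mathbf{1}_{[1/n,\,T]}$: each $x_n$ is nondecreasing, so $x_n(t_2)$ always lies on the segment $[x_n(t_1),x_n(t_3)]$ and $w(x_n,\delta)\equiv 0$, and the family is uniformly bounded by $1$; yet no subsequence converges in $M_1$, because the completed graphs converge in the Hausdorff sense to $\{0\}\times[0,1]\cup(0,T]\times\{1\}$, which is not the completed graph of any c\`adl\`ag function (the convention $x(0-):=x(0)$ forces the completed graph over $t=0$ to be a single point). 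The correct Arzel\`a--Ascoli statement for $M_1$ (Theorem 12.12.2 of Whitt's book) therefore carries an additional term controlling the \emph{ordinary} oscillation $\sup\{|x(t_1)-x(t_2)|: 0\leqslant t_1\leqslant t_2\leqslant\delta\}$ near the left endpoint (and symmetrically near $T$), which the segment modulus $w$ cannot see. This is not a fatal defect of the theorem --- the missing control is supplied by the hypothesis that $A$ contains the point $0$, so that condition (i) pins down $X_n(0)$ and, combined with $w(X_n,\delta)$ small and finite-dimensional tightness at points of $A$ just to the right of $0$, yields the required endpoint oscillation bound; the right endpoint is handled by restricting to $[0,t_k]$ with $t_k$ a continuity point of the limit. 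But your proof never uses the hypothesis $0\in A$, which is the tell-tale sign that this step has been elided: as written, the tightness argument would ``prove'' $M_1$-convergence of $\mathbf{1}_{[1/n,T]}$ to $\mathbf{1}_{[0,T]}$, which fails. Supplying the endpoint oscillation estimate from conditions (i) and (ii) jointly is the one piece of work still to be done.
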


\section{Functional limit theorem for stochastic integrals}\label{SectionFLTSI}
This section contains the main result of this paper. That is, a functional limit theorem for the integral of 
a deterministic continuous and bounded function $f\in C_{b}(\R)$ with respect to a properly rescaled continuous time 
random walk in the space $\D$ equipped with the Skorokhod $M_1$-topology. The limit is the corresponding integral but with respect 
to a time-changed $\alpha$-stable L\'{e}vy process where the time-change is given by the functional inverse of a $\beta$-stable subordinator. 

This section is organized as follows. In Subsection \ref{introdFCLT}, we focus on providing the ingredients and preliminary results necessary to prove the functional limit theorem.
The main result of the paper comes in Subsection \ref{MTFCLT}.

\subsection{Preliminary results}\label{introdFCLT}
\noindent Let $h$ and $r$ be two positive scaling factors such that 
\begin{equation}\label{scalingf}
\lim_{h,r\to 0}\frac{h^{\alpha}}{r^{\beta}}=1,
\end{equation}
with $\alpha\in (0,2]$ and $\beta\in(0,1]$. We rescale the duration $J$ by a positive scaling factor $r$.  
The rescaled duration is denoted by $J_r:=rJ$. Similarly, we rescale the jump $Y$ by a positive real parameter $h$ getting $Y_h:=hY$.
For $\beta=1$ and $\alpha=2$, this corresponds to the typical scaling for 
Brownian motion (or the Wiener process). 

We now define the rescaled CTRW (or rescaled compound fractional Poisson process) denoted by $X_{r, h}:=\{X_{r, h}(t)\}_{t\geqslant 0}$ as follows:
\[X_{r, h}(t)=\sum_{i=1}^{N_{\beta}(t/r)}hY_i,\]
where $N_{\beta}=\{N_{\beta}(t)\}_{t\geqslant 0}$ is the fractional Poisson process.
In Subsection \ref{MTFCLT}, we will take $r=1/n$ and $h= 1/n^{\beta/\alpha}$ as scaling factors 
and the limit for $n\to+\infty$, instead of $r,h\to 0$.

For the case of weak convergence for random variables, we can mention the following result by Scalas et al. 
\cite{Sc11}. Under some suitable assumptions, for fixed $t$, the weak convergence of the random variables  
can be proved \[X_{r,h}(t)\overset{\mathcal{L}}{\Rightarrow} U_{\alpha,\beta}(t),\] when 
$h,\; r\to 0$ and where $U_{\alpha,\beta}(t)$ denotes the random variable whose distribution is characterized by 
the probability density function $u_{\alpha, \beta}(x,t)$ given in the following theorem.
\begin{theorem}[Theorem 4.2, \cite{Sc11}]
Let $X_{r, h}$ be a compound fractional Poisson process and let $h$ and $r$ be two scaling factors such that
\[{X}^{(h)}_n = h \sum_{i=1}^n Y_i,\]
\[T^{(r)}_n = r \sum_{i=1}^nJ_i,\]
and
\[\lim_{h,r\to 0}\frac{h^{\alpha}}{r^{\beta}}=1,	\]
 with $0 <\alpha\leqslant 2$ and $0 < \beta\leqslant  1$. To clarify the role of the parameter $\alpha$, further assume that,
for $h\to 0$, one has	 \[\hat{f}_Y(h\kappa)\sim 1-h^{\alpha}|\kappa|^{\alpha}\] then, for $h,\; r\to 0$ with $h^{\alpha}/r^{\beta}\to 1$, 
$f_{h{X}_{\alpha,\beta}(rt)}(x,t)$ weakly converges to 
\begin{equation}
u_{\alpha, \beta}(x,t):=\frac{1}{t^{\beta
/\alpha}}W_{\alpha,\beta}\left(\frac{x}{t^{\beta/\alpha}}\right)
\end{equation} 
with 
\begin{equation}
W_{\alpha,\beta}=\frac{1}{2\pi}\int_{-\infty}^{+\infty} d\kappa e^{-i\kappa u }E_{\beta}(-|\kappa|^{\alpha}).
\end{equation}
\end{theorem}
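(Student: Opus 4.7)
The plan is to establish weak convergence by showing pointwise convergence of characteristic functions and then invoking L\'evy's continuity theorem. Let $\varphi_{r,h}(\kappa,t) \DEF \mathbb{E}[e^{i\kappa X_{r,h}(t)}]$. Conditioning on the value of the fractional Poisson process and using the independence of the jumps $\{Y_i\}$ from the waiting times $\{J_i\}$,
\begin{equation*}
\varphi_{r,h}(\kappa,t) = \mathbb{E}\bigl[\hat f_Y(h\kappa)^{N_\beta(t/r)}\bigr].
\end{equation*}
Combining the explicit probability $\mathbb{P}(N_\beta(s)=n)=s^{\beta n}E_\beta^{(n)}(-s^\beta)/n!$ recalled in Section~2 with the Taylor expansion
$E_\beta(-s^\beta+w)=\sum_{n\geqslant 0}\tfrac{w^n}{n!}E_\beta^{(n)}(-s^\beta)$, which is valid for all $w\in\mathbb{C}$ since $E_\beta$ is entire, one obtains the probability generating function identity
\begin{equation*}
\mathbb{E}\bigl[z^{N_\beta(s)}\bigr] = E_\beta\bigl(s^\beta(z-1)\bigr),\qquad |z|\leqslant 1.
\end{equation*}
Substituting $z=\hat f_Y(h\kappa)$ and $s=t/r$ gives the compact formula $\varphi_{r,h}(\kappa,t) = E_\beta\bigl((t/r)^\beta(\hat f_Y(h\kappa)-1)\bigr)$.

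Next, by the asymptotic hypothesis $\hat f_Y(h\kappa)-1=-h^\alpha|\kappa|^\alpha+o(h^\alpha)$ together with the scaling relation $h^\alpha/r^\beta\to 1$,
\begin{equation*}
\frac{t^\beta}{r^\beta}\bigl(\hat f_Y(h\kappa)-1\bigr) = -\frac{h^\alpha}{r^\beta}\,t^\beta|\kappa|^\alpha + o(1) \longrightarrow -t^\beta|\kappa|^\alpha,
\end{equation*}
uniformly on compact sets of $\kappa$. Continuity of $E_\beta$ then yields the pointwise limit $\varphi_{r,h}(\kappa,t)\to E_\beta(-t^\beta|\kappa|^\alpha)$. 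On the other hand, the very definition of $W_{\alpha,\beta}$ as the inverse Fourier transform of $E_\beta(-|\kappa|^\alpha)$, together with the standard dilation rule for the Fourier transform, shows that $u_{\alpha,\beta}(\cdot,t)=t^{-\beta/\alpha}W_{\alpha,\beta}(\cdot/t^{\beta/\alpha})$ has characteristic function exactly $E_\beta(-t^\beta|\kappa|^\alpha)$, matching the limit. Since this function is continuous at $\kappa=0$ with value $1$, L\'evy's continuity theorem delivers the claimed weak convergence to $U_{\alpha,\beta}(t)$.

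The main subtlety I expect is the justification of the PGF identity at a complex argument $z=\hat f_Y(h\kappa)$ inside the closed unit disk: the interchange of summation and Taylor expansion is legitimate by Fubini because $|z^{N_\beta(s)}|\leqslant 1$ and the Taylor series of the entire function $E_\beta$ converges absolutely on all of $\mathbb{C}$. A secondary but essential point is to verify that $W_{\alpha,\beta}$ is a bona fide probability density (nonnegative with unit mass); this follows from the identification of $E_\beta(-|\kappa|^\alpha)$ as the characteristic function of the subordinated $\alpha$-stable variable $L_\alpha(D_\beta^{-1}(1))$, obtained by conditioning on the inverse stable subordinator and using the Mittag-Leffler representation of its Laplace transform together with the characteristic function of the symmetric $\alpha$-stable law.
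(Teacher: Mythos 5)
Your proof is correct and takes essentially the same route as the source: the paper under review states this theorem without proof, importing it from \cite{Sc11}, where it is established by exactly this characteristic-function argument --- the Mittag--Leffler generating function identity $\mathbb{E}[z^{N_\beta(s)}]=E_\beta(s^\beta(z-1))$, the scaling limit $E_\beta(-t^\beta|\kappa|^\alpha)$ under $h^\alpha/r^\beta\to 1$, and L\'evy's continuity theorem together with Fourier inversion to identify $u_{\alpha,\beta}$.
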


This weak convergence of random variables can be extended for stochastic processes but one of the inconvenients is to establish a functional central limit theorem in an appropriate functional space (see discussion in Section \ref{Skortops}). 

In the literature, we find the work by Magdziarz and Weron. In \cite{MW06}, 
they conjectured that the process $U_{\alpha,\beta}:=\{L_{\alpha}(D^{-1}_{\beta}(t))\}_{t\geqslant 0}$ (i.e. 
the symmetric $\alpha$-stable L\'{e}vy process $\{L_{\alpha}(t)\}_{t\geqslant 0}$  subordinated to the functional 
inverse $\beta$-stable subordinator $\{D^{-1}_{\beta}(t)\}_{t\geqslant 0}$) is the functional limit of the compound 
fractional Poisson process ${X}_{r,h}$. This conjecture is proved in \cite{MS04,MS10} as discussed in the following results.

For $t\geqslant 0$, we define 
\begin{equation}
T_t:=\sum_{i=1}^{\lfloor t\rfloor}J_i,
\end{equation} 
where $\lfloor t\rfloor$ denotes the integer part of $t$.
Then, taking into account the above convergence and using Example 11.2.18 of \cite{MS01}, 
the convergence of the finite-dimensional distributions of $ \{c^{-1/\beta}T_{ct}\}_{t\geqslant 0}$ to
those of  $\{D_{\beta}(t)\}_{t\geqslant 0}$ follows:
\begin{equation}\label{eqstablsubqv}
\{c^{-1/\beta}T_{ct}\}_{t\geqslant 0}\overset{\mathcal{L}}{\Rightarrow}\{D_{\beta}(t)\}_{t\geqslant 0},\qquad 
\mathrm{as} \quad c\to+\infty.
\end{equation}
We denote by $\overset{\mathcal{L}}{\Rightarrow}$ the convergence of the finite-dimensional distributions of the corresponding process.

The limiting process $\{D_{\beta}(t)\}_{t\geqslant 0}$ is called the $\beta$-stable subordinator.
In the class of strictly $\beta$-stable diffusion processes it is the positive-oriented extreme one.
It is c\`{a}dl\`{a}g, nondecreasing and with stationary increments (see for instance, \cite{Sato99}).

The functional inverse of the process $D_{\beta}(t)$ can be defined as
\begin{equation}\label{invbetsub}
D^{-1}_{\beta}(t)=\inf\{x\geqslant 0:D_{\beta}(x)>t\}.
\end{equation}
It has almost surely continuous non-decreasing sample paths and without stationary and independent increments (see \cite{MNV}).

For any integer $n\geqslant 0$ and any $t\geqslant 0$, it can be
easily proved that the number of jumps by time $t$ is at least $n$
if and only if the $n$-th jump occurs at or before $t$, therefore
the following events coincide:
\begin{equation}\label{eqTnNt}
\{T_n\leqslant t\}=\{N_{\beta}(t)\leqslant n\}.
\end{equation}

From (\ref{eqstablsubqv}) and (\ref{eqTnNt}), a
limit theorem for the properly rescaled counting process $N_{\beta}$ follows.

\begin{theorem}[Theorem 3.6, \cite{MS01}]
\begin{equation}\label{limPoiss}
\{c^{-1/\beta}N_{\beta}(ct)\}_{t\geqslant 0}\overset{\mathcal{L}}{\Rightarrow}\{D^{-1}_{\beta}(t)\}_{t\geqslant 0},\quad 
\mathrm{as} \quad c\to+\infty.
\end{equation}
\end{theorem}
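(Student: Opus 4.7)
The natural strategy is to deduce (\ref{limPoiss}) from (\ref{eqstablsubqv}) by inversion: on each sample path, $N_\beta(t)$ is essentially the right-continuous generalized inverse of the piecewise-constant process $n\mapsto T_n$, a relationship encoded by the duality (\ref{eqTnNt}). Since the rescaled epoch process converges in the sense of finite-dimensional distributions to the $\beta$-stable subordinator $D_\beta$, its inverse should converge to $D^{-1}_\beta$, as defined in (\ref{invbetsub}).

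The first step is one-dimensional convergence at fixed $t,x>0$. Using (\ref{eqTnNt}) one has
\[
P\bigl(c^{-1/\beta}N_\beta(ct)< x\bigr)
= P\bigl(N_\beta(ct)<\lceil c^{1/\beta}x\rceil\bigr)
= P\bigl(T_{\lceil c^{1/\beta}x\rceil} > ct\bigr).
\]
Rescaling the right-hand side and combining (\ref{eqstablsubqv}) with the self-similarity $D_\beta(ax)\stackrel{d}{=}a^{1/\beta}D_\beta(x)$ of the $\beta$-stable subordinator, the limit as $c\to+\infty$ is $P(D_\beta(x)>t)$, which by (\ref{invbetsub}) equals $P(D^{-1}_\beta(t)\leqslant x)$. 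This yields convergence of the one-dimensional marginals.

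For joint convergence at finitely many times $0\leqslant t_1<\cdots<t_k$, the same duality translates the event $\{c^{-1/\beta}N_\beta(ct_i)<x_i,\; i=1,\dots,k\}$ into a joint event on the rescaled $T$-process evaluated at the indices $\lceil c^{1/\beta}x_i\rceil$, and the joint finite-dimensional convergence provided by (\ref{eqstablsubqv}) can then be pushed through. A more conceptual alternative is to invoke a continuous mapping theorem for the right-continuous generalized inverse, which is continuous at nondecreasing c\`adl\`ag paths that are strictly increasing and unbounded, both almost sure properties of $D_\beta$.

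The main obstacle is the simultaneous control of the inversion across several time arguments. The one-dimensional step works cleanly because self-similarity absorbs the mismatched $c^{1/\beta}$ scaling, whereas in the joint step one must either track several indices $\lceil c^{1/\beta}x_i\rceil$ at once or justify continuity of the inverse map at $D_\beta$ in a suitable topology. In the latter approach, the almost sure strict monotonicity of $D_\beta$ (which renders $D^{-1}_\beta$ almost surely continuous) is the essential regularity property underlying the inversion, and this is precisely what the appeal to Theorem~3.6 of \cite{MS01} ultimately leverages.
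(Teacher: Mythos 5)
Your approach coincides with the paper's: the paper offers no proof of this cited result beyond the remark that it "follows from (\ref{eqstablsubqv}) and (\ref{eqTnNt})", and your inversion argument (duality between $\{T_n\leqslant t\}$ and $\{N_\beta(t)\geqslant n\}$, then finite-dimensional convergence of the rescaled epoch process, then either index-tracking or continuity of the generalized inverse at the a.s.\ strictly increasing unbounded paths of $D_\beta$) is exactly how Theorem 3.6 of \cite{MS01} is actually proved. Note in passing that you implicitly use the correct duality $\{T_n\leqslant t\}=\{N_\beta(t)\geqslant n\}$ rather than the misprinted "$\leqslant n$" in (\ref{eqTnNt}), which is right.

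One concrete caveat: the normalization as printed does not match your computation, and your "rescaling plus self-similarity" step is where this is papered over. Since $T_n\sim n^{1/\beta}$, one has $N_\beta(ct)\sim (ct)^{\beta}$, so the correct statement (and the one in \cite{MS01}) normalizes by $c^{-\beta}$, not $c^{-1/\beta}$. With $n=\lceil c^{\beta}x\rceil$ one applies (\ref{eqstablsubqv}) with scaling constant $c^{\beta}$ to get $c^{-1}T_{\lceil c^{\beta}x\rceil}\Rightarrow D_\beta(x)$, hence $P\bigl(T_{\lceil c^{\beta}x\rceil}>ct\bigr)\to P(D_\beta(x)>t)=P(D_\beta^{-1}(t)\leqslant x)$ by (\ref{invbetsub}), with no self-similarity needed at all. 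With the exponent $1/\beta$ as printed and your index $\lceil c^{1/\beta}x\rceil$, the powers of $c$ do not cancel: one gets $P\bigl(c^{-1/\beta^{2}}T_{\lceil c^{1/\beta}x\rceil}>c^{1-1/\beta^{2}}t\bigr)\to P(D_\beta(x)>0)=1$ for every $x>0$, i.e.\ $c^{-1/\beta}N_\beta(ct)\to 0$ in probability, so the theorem as displayed is false for $\beta\in(0,1)$ and no invocation of $D_\beta(ax)\overset{d}{=}a^{1/\beta}D_\beta(x)$ can repair it. This is a typo inherited from the paper rather than an error in your strategy, but a correct write-up must fix the exponent before the one-dimensional step closes.
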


As a corollary, by using Stone's Theorem \cite{Stone}, the
convergence of the stochastic processes in the Skorokhod space $D([0,\infty),\R_+)$ follows. We endow
this space with the usual $J_1$-topology introduced in Section \ref{Skortops}.

\begin{corollary}[Corollary 3.7, \cite{MS01}]\label{cor37}
\begin{equation}\label{limPoiss2}
\{c^{-1/\beta}N_{\beta}(ct)\}_{t\geqslant 0}\overset{J_1-top}{\Rightarrow}\{D^{-1}_{\beta}(t)\}_{t\geqslant 0}, \quad 
\mathrm{as} \quad c\to+\infty.
\end{equation}
\end{corollary}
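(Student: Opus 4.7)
My plan is to derive the $J_1$-functional convergence from the already-established finite-dimensional convergence of the partial-sum process via two operations: first, upgrade the convergence of $\{c^{-1/\beta}T_{ct}\}_{t\geqslant 0}$ from finite-dimensional to $J_1$; then, pass to the inverse using Stone's continuous mapping theorem. The hint in the text points to this route explicitly.

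The first step is to strengthen (\ref{eqstablsubqv}) to
\begin{equation*}
\{c^{-1/\beta}T_{ct}\}_{t\geqslant 0}\overset{J_1-top}{\Rightarrow}\{D_{\beta}(t)\}_{t\geqslant 0}, \qquad c\to +\infty,
\end{equation*}
in $D([0,\infty),\R_+)$. Since the waiting times $J_i$ are i.i.d.\ positive random variables in the strict domain of attraction of a positively-skewed $\beta$-stable law, Skorokhod's classical invariance principle in the stable domain of attraction applies: the finite-dimensional convergence supplied by (\ref{eqstablsubqv}) together with the standard tightness estimates for nondecreasing partial-sum processes attracted to stable laws (see \cite{whitt02}, Chap.~4) gives $J_1$-convergence in $D([0,\infty),\R_+)$. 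The integer-part normalization in $T_{ct}:=\sum_{i=1}^{\lfloor ct\rfloor}J_i$ is harmless, since the extra contribution $c^{-1/\beta}J_{\lfloor ct\rfloor+1}$ tends to $0$ in probability.

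Second, I would use the duality (\ref{eqTnNt}) to identify $N_\beta$ as the right-continuous first-passage-time inverse of $n\mapsto T_n$, namely $N_\beta(t)=\inf\{n\geqslant 0 : T_{n+1}>t\}$. After the scalings $t\mapsto ct$ and $n\mapsto c^{-1/\beta}n$, this identifies $c^{-1/\beta}N_\beta(ct)$ with the first-passage-time inverse of $s\mapsto c^{-1/\beta}T_{cs}$, up to an $O(c^{-1/\beta})$ integer-part correction. Stone's theorem \cite{Stone} states that the inverse / first-passage map from nondecreasing unbounded c\`adl\`ag functions into $D$ is continuous in the $J_1$-topology at every strictly increasing unbounded limit. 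The paths of $D_\beta$ are almost surely strictly increasing (for $\beta\in(0,1)$ the L\'evy measure is infinite, so the subordinator has no intervals of constancy), which is equivalent to $D_\beta^{-1}$ being almost surely continuous. The continuous mapping theorem then transports the convergence of the first step to the desired conclusion.

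The main obstacle is the careful formulation and invocation of Stone's theorem together with the verification of its hypotheses --- strict increase of the paths of $D_\beta$, so that its inverse is single-valued and continuous --- and the bookkeeping of the discretization errors both in the argument ($\lfloor ct\rfloor$ vs.\ $ct$) and in the codomain ($c^{-1/\beta}\lfloor c\,D_\beta^{-1}(t)\rfloor$ vs.\ $D_\beta^{-1}(t)$). Both rounding errors are uniformly of order $c^{-1/\beta}$ in probability and hence negligible in the $J_1$-topology, but they must be handled explicitly before the continuous mapping step is applied.
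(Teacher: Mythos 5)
Your argument is correct, but it follows a different route from the one the paper (following Corollary 3.7 of \cite{MS01}) relies on. The paper takes the finite-dimensional convergence \eqref{limPoiss} of the rescaled counting process itself as the starting point and then invokes Stone's theorem \cite{Stone} in its role as a criterion for upgrading finite-dimensional convergence of processes with \emph{monotone} sample paths to $J_1$-convergence when the limit process has a.s.\ continuous paths --- here $D_\beta^{-1}$ is nondecreasing and continuous, so pointwise convergence of the monotone paths already forces local uniform convergence by a Dini/P\'olya-type argument, and no inverse map is ever applied at the functional level. You instead first strengthen \eqref{eqstablsubqv} to a full $J_1$ invariance principle for the partial-sum process $c^{-1/\beta}T_{ct}$ and then push it through the first-passage inverse via the duality \eqref{eqTnNt}; this is the route of Meerschaert--Scheffler's original proof of the f.d.d.\ statement and of Whitt's treatment, and it is perfectly valid, but it requires two extra inputs the paper does not need at this point: the functional (not merely finite-dimensional) CLT for $T$, and the continuity of the inverse map at strictly increasing unbounded functions. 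On the latter, note a misattribution: that continuity statement is not Stone's theorem but essentially Corollary 13.6.4 of \cite{whitt02} (or Vervaat's lemma); Stone's result concerns weak convergence on $D([0,+\infty),\R)$ itself. Your verification that $D_\beta$ is a.s.\ strictly increasing (infinite L\'evy measure, no intervals of constancy) and your accounting of the $O(c^{-1/\beta})$ rounding errors are exactly the hypotheses that route needs, so the proof goes through; it is simply heavier than the paper's one-line appeal to monotonicity plus continuity of the limit, while in exchange it yields the joint convergence of $(c^{-1/\beta}T_{c\cdot},c^{-1/\beta}N_\beta(c\cdot))$, which is the kind of statement needed later in Section \ref{SectionFLTSI}.
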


Assuming that the jumps $Y_i$ belong to the strict generalized
domain of attraction of some stable law with exponent $\alpha\in
(0,2)$, then there exist $a_n>0$ such that
\[a_n\sum_{i=1}^nY_i\overset{\mathcal{L}}{\Rightarrow} \widetilde{L}_{\alpha}, \quad \mathrm{as} \quad c\to+\infty,\]
where $\widetilde{L}_{\alpha}$ denotes a random variable with symmetric $\alpha$-stable distribution.

For sake of simplicity we consider symmetric $\alpha$-stable distribution, but the results are also true without the assumption of symmetry.
 
From Example 11.2.18 \cite{MS01}, we can extend this
convergence to the convergence of the corresponding
finite-dimensional distributions for the stochastic process
$\sum_{i=1}^{[t]}Y_i$.

\begin{theorem}[ \cite{MS01}]
\begin{equation}\label{eqstabjumps}
\left \{ c^{-1/\alpha}\sum_{i=1}^{[ct]}Y_i \right \}_{t\geqslant 0}\overset{\mathcal{L}}
{\Rightarrow}\{L_{\alpha}(t)\}_{t\geqslant 0}, \quad \mathrm{when} \quad c\to +\infty.
\end{equation}
\end{theorem}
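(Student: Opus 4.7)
The plan is to reduce the multivariate finite-dimensional convergence to the one-dimensional convergence supplied by the strict domain-of-attraction hypothesis, exploiting two features of the limit: the self-similarity of the symmetric $\alpha$-stable L\'{e}vy process $L_\alpha$ (namely $L_\alpha(t)\stackrel{d}{=}t^{1/\alpha}L_\alpha(1)$) and its stationary independent increments. This reduction is essentially the content of Example 11.2.18 of \cite{MS01}; I sketch the main steps below.

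\textbf{Step 1 (marginal at a single time).} I would fix $t>0$ and write
\[c^{-1/\alpha}\sum_{i=1}^{[ct]}Y_i = \left(\frac{[ct]}{c}\right)^{\!1/\alpha}\,[ct]^{-1/\alpha}\sum_{i=1}^{[ct]}Y_i.\]
As $c\to+\infty$ the first factor tends deterministically to $t^{1/\alpha}$, while the second factor converges in distribution to $\widetilde{L}_\alpha$ by the domain-of-attraction hypothesis (with $a_n=n^{-1/\alpha}$ as in the strictly stable case). Slutsky's theorem together with $t^{1/\alpha}\widetilde{L}_\alpha\stackrel{d}{=}L_\alpha(t)$ gives the one-dimensional marginal convergence.

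\textbf{Step 2 (joint convergence at $0\leqslant t_1<\cdots<t_k$).} Setting $t_0=0$ and decomposing telescopically,
\[c^{-1/\alpha}\sum_{i=1}^{[ct_j]}Y_i=\sum_{l=1}^{j}S_l^{(c)},\qquad S_l^{(c)}:=c^{-1/\alpha}\sum_{i=[ct_{l-1}]+1}^{[ct_l]}Y_i,\]
the blocks $S_1^{(c)},\ldots,S_k^{(c)}$ are independent because the $Y_i$ are i.i.d., and the block length satisfies $[ct_l]-[ct_{l-1}]=c(t_l-t_{l-1})+O(1)$. Applying the argument of Step~1 to each block yields $S_l^{(c)}\overset{\mathcal{L}}{\Rightarrow}L_\alpha(t_l-t_{l-1})$, with the limiting variables mutually independent. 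The continuous linear partial-sum map $\R^k\to\R^k$ then transports this into the desired joint convergence of $(c^{-1/\alpha}\sum_{i=1}^{[ct_j]}Y_i)_{j=1}^{k}$ to the finite-dimensional marginals of $\{L_\alpha(t)\}_{t\geqslant 0}$, whose increments are stationary and independent with the right distributions.

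The main bookkeeping subtlety lies in Step~2: one must check that the floor discrepancy $[ct_l]-[ct_{l-1}]-c(t_l-t_{l-1})$, though only bounded in $c$, does not spoil the block-wise normalisation. This is handled by the same Slutsky step used in Step~1 applied to the ratio $([ct_l]-[ct_{l-1}])/c\to t_l-t_{l-1}$, or equivalently by passing directly to characteristic functions and using
\[\widehat{f}_Y\bigl(c^{-1/\alpha}\kappa\bigr)^{[ct_l]-[ct_{l-1}]}\longrightarrow\exp\bigl(-(t_l-t_{l-1})|\kappa|^{\alpha}\bigr),\]
followed by L\'{e}vy's continuity theorem. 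No tightness argument is required since only finite-dimensional convergence is being claimed.
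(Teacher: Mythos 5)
Your argument is correct and is essentially the content of Example 11.2.18 of \cite{MS01}, which is exactly what the paper invokes (it gives no proof of its own, only this citation): one-dimensional convergence from the domain-of-attraction/self-similarity normalisation, then finite-dimensional convergence via the telescoping block decomposition, independence of disjoint blocks, and Slutsky or characteristic functions to absorb the $O(1)$ floor discrepancies. The only caveat worth noting is that the clean norming $c^{-1/\alpha}$ (with no slowly varying correction) is justified here because the paper takes the $Y_i$ to be i.i.d.\ symmetric strictly $\alpha$-stable, which is precisely the setting in which your characteristic-function computation closes the argument.
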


The limiting process is a symmetric $\alpha$-stable L\'{e}vy process
with sample path belonging to $D([0,+\infty),\R)$. As a corollary,  the authors of \cite{MS04} proved the convergence in the 
Skorokhod space $D([0,+\infty),\R)$ endowed with the $J_1$-topology:

\begin{corollary}[Theorem 4.1, \cite{MS04}] \label{cor:3.2}
\begin{equation}\label{eqstabjumps2}
\left \{c^{-1/\alpha}\sum_{i=1}^{[ct]}Y_i \right \}_{t\geqslant 0}\overset{J_1-top}{\Rightarrow}
\{L_{\alpha}(t)\}_{t\geqslant 0}, \quad \mathrm{when} \quad c\to +\infty.
\end{equation}
\end{corollary}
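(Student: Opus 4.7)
The strategy is the textbook one for upgrading convergence of finite-dimensional distributions (already in hand from the preceding theorem (\ref{eqstabjumps})) to functional convergence in $D([0,+\infty),\R)$ with the $J_1$-topology: I would establish tightness of the family $\{S_c\}_{c>0}$, where $S_c(t):=c^{-1/\alpha}\sum_{i=1}^{[ct]}Y_i$. Because the limit $L_\alpha$ is a Lévy process and hence stochastically continuous, the standard reduction (Stone's theorem, or the analogue of Theorem 16.7 in Billingsley adapted to the half-line) lets me work on each compact interval $[0,T]$ separately and then obtain convergence on $[0,+\infty)$ from convergence on $[0,T]$ for every $T>0$.

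To obtain $J_1$-tightness on $[0,T]$, I would view $\{S_c\}$ as the row-sum process of the triangular array $\xi^c_i:=c^{-1/\alpha}Y_i$, $1\leqslant i\leqslant [cT]$, and apply Skorokhod's classical theorem on functional convergence of triangular arrays to Lévy processes (see, e.g., Jacod--Shiryaev, Chapter VII, or Kallenberg, Theorem 15.17). That theorem delivers $J_1$-functional convergence to a Lévy process under three hypotheses: (i) the measures $[ct]\,\mathbb{P}(\xi^c_1\in\cdot)$ converge vaguely on $\R\setminus\{0\}$ to the Lévy measure $\nu_\alpha(dx)=c_\alpha|x|^{-1-\alpha}dx$ of $L_\alpha$; (ii) a truncated-second-moment vanishing condition for the small jumps; (iii) convergence of the centering drifts. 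Since $Y_1$ lies in the strict domain of attraction of a symmetric $\alpha$-stable law, (i) is essentially the statement that the tail $\mathbb{P}(|Y_1|>x)$ is regularly varying with index $-\alpha$, combined with the scaling $c^{-1/\alpha}$; (iii) is trivial by symmetry; and (ii) follows from Karamata's theorem applied to the regularly varying tail.

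The main obstacle is that all naïve tightness arguments based on moment estimates collapse: for $\alpha<2$ the $Y_i$ have infinite variance, and for $\alpha\leqslant 1$ even their mean may fail to exist, so Kolmogorov- or Chebyshev-type inequalities cannot be applied directly to $S_c(t)-S_c(s)$. The standard workaround has two ingredients that together substitute for a moment bound: Lévy's maximal inequality for sums of symmetric independent variables, which controls $\sup_{k\leqslant n}\bigl|\sum_{i=1}^k Y_i\bigr|$ in terms of the single partial sum $\bigl|\sum_{i=1}^n Y_i\bigr|$, together with truncation at level $\varepsilon c^{1/\alpha}$ to separate the contribution of the ``big jumps'' (which converge to the Poisson point process driving $L_\alpha$) from the ``small jumps'' (whose variance is controlled by Karamata). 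The triangular-array theorem invoked above packages exactly this heavy-tail bookkeeping; assembling it with the reduction to compact intervals completes the proof.
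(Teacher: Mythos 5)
The paper itself offers no proof of this statement: it is quoted verbatim as Theorem 4.1 of Meerschaert--Scheffler \cite{MS04}, with the finite-dimensional convergence (\ref{eqstabjumps}) cited from Example 11.2.18 of \cite{MS01} immediately beforehand. So there is nothing in the paper to compare your argument against line by line; what you have done is reconstruct the standard proof that lies behind the cited reference, and your reconstruction is sound. The route --- reduce to compact intervals using stochastic continuity of the L\'evy limit, view $S_c$ as the row-sum process of the i.i.d.\ triangular array $\xi^c_i=c^{-1/\alpha}Y_i$, and invoke the classical Skorokhod/Jacod--Shiryaev theorem on functional convergence of such arrays to L\'evy processes under vague convergence of $[ct]\,\mathbb{P}(\xi^c_1\in\cdot)$ to the stable L\'evy measure, vanishing truncated second moments, and drift convergence --- is exactly the argument in the literature, and your identification of L\'evy's maximal inequality plus truncation at level $\varepsilon c^{1/\alpha}$ as the substitute for moment-based tightness is the right diagnosis of where the difficulty sits. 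Two small remarks: your notation $c_\alpha$ for the constant in the L\'evy measure collides with the scaling parameter $c$, and you should note that for i.i.d.\ rows the triangular-array theorem actually upgrades one-dimensional convergence at a single time directly to $J_1$-functional convergence, so the finite-dimensional input (\ref{eqstabjumps}) is more than is needed; finally, the symmetry assumption the paper makes on the $Y_i$ is what justifies your dismissal of the centering condition, and without it a drift correction would have to be tracked for $\alpha\leqslant 1$.
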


Using the results given above, one can state the following Functional Central Limit Theorem (FCLT) proved by Meerschaert and Scheffler in 
\cite{MS04} which identifies the limit process as a time-changed $\alpha$-stable L\'{e}vy process with respect to the functional inverse 
of a $\beta$-stable subordinator $\{D_{\beta}^{-1}(t)\}_{t\geqslant 0}$, with $\alpha \in (0,2]$ and $\beta\in (0,1)$.

\begin{theorem}[Theorem 4.2, \cite{MS04}] Under the
distributional assumptions considered above for the waiting times
$J_i$ and the jumps $Y_i$, we have
\begin{equation}\label{limitcomp}
\left\{c^{-\beta/\alpha}\sum_{i=1}^{N_{\beta}(t)}Y_i\right\}_{t\geqslant 0}\overset{M_1-top}{\Rightarrow}
\{L_{\alpha}(D_{\beta}^{-1}(t))\}_{t\geqslant 0}, \quad \mathrm{when} \quad c\to +\infty,
\end{equation}
in the Skorokhod space $D([0,+\infty),\R)$ endowed with the $M_1$-topology.
\end{theorem}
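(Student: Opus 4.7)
My approach is to express the rescaled compound fractional Poisson process as a composition and then apply a continuous-mapping argument for composition in the Skorokhod space. Set
\begin{equation*}
S_c(s)\DEF c^{-\beta/\alpha}\sum_{i=1}^{\lfloor c^{\beta}s\rfloor}Y_i,\qquad N_c(t)\DEF c^{-\beta}N_{\beta}(ct),
\end{equation*}
so that the partial-sum process on the left of \eqref{limitcomp} can be written as $S_c\bigl(N_c(t)\bigr)$, using that $N_{\beta}$ is integer-valued. By Corollary \ref{cor:3.2} applied with scaling parameter $c^{\beta}$, the process $S_c$ converges to $L_{\alpha}$ in the $J_1$-topology on $D([0,+\infty),\R)$; by Corollary \ref{cor37}, $N_c$ converges to $D_{\beta}^{-1}$ in the $J_1$-topology on $D([0,+\infty),\R_+)$.

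Next I would upgrade these two marginal convergences to joint convergence $(S_c,N_c)\Rightarrow (L_{\alpha},D_{\beta}^{-1})$ in the product Skorokhod space endowed with the product $J_1$-topology. This step relies on the standing hypothesis that the jumps $(Y_i)_{i\in\N}$ and the waiting times $(J_i)_{i\in\N}$ are independent: $S_c$ depends only on the former and $N_c$ only on the latter, so the two are independent for every $c$, and the limits $L_{\alpha}$ and $D_{\beta}^{-1}$ are independent as well. Marginal convergence combined with independence then yields joint convergence in the product topology.

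The third step is the continuous-mapping theorem applied to the composition map $\Phi(x,y)=x\circ y$. The inverse stable subordinator $D_{\beta}^{-1}$ is almost surely continuous and nondecreasing; its flat intervals are in bijection with the (countable) set of jumps of $D_{\beta}$. Since $L_{\alpha}$ is independent of $D_{\beta}^{-1}$, almost surely no jump time of $L_{\alpha}$ coincides with a left endpoint of such a flat interval. Under these hypotheses, the composition continuity theorem for the $M_1$-topology (Theorem 13.3.1 of \cite{whitt02}) applies and gives $S_c\circ N_c\Rightarrow L_{\alpha}\circ D_{\beta}^{-1}$ in $\bigl(D([0,+\infty),\R),M_1\bigr)$, which is precisely \eqref{limitcomp}.

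The main obstacle is this composition step. The flat intervals of $D_{\beta}^{-1}$ prevent any direct $J_1$-composition argument and are the reason for the loss of topology strength to $M_1$: the approximating sums $S_c$ can accumulate several jumps inside a short spatial region that collapses to a single flat interval of the time change in the limit, and these accumulated jumps are glued into a single jump of $L_{\alpha}\circ D_{\beta}^{-1}$, which is exactly the phenomenon $M_1$-convergence accommodates. The technical heart of the proof is therefore twofold: verifying almost-sure disjointness of the jump times of $L_{\alpha}$ from the exceptional set of levels at which $D_{\beta}^{-1}$ starts a flat interval, and tracking parametric representations of the graphs through the composition in order to invoke the $M_1$ continuity statement.
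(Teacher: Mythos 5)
Your proposal is correct and follows essentially the same route the paper sketches for this cited result of Meerschaert and Scheffler: combine Corollary \ref{cor37} and Corollary \ref{cor:3.2}, pass to joint convergence, and apply the continuous mapping theorem for the composition map with Whitt's $M_1$-continuity result (the paper points to Theorem 13.2.4 of \cite{whitt02} rather than 13.3.1, a minor citation difference). You in fact make explicit two steps the paper's outline leaves implicit --- joint convergence via independence of the jumps and waiting times, and the almost-sure disjointness of the discontinuities of $L_{\alpha}$ from the levels of the flat stretches of $D_{\beta}^{-1}$ --- which is exactly where the loss from $J_1$ to $M_1$ occurs.
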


The functional limit of the compound Poisson process is an $\alpha$-stable L\'evy process, whereas
in the case of the compound fractional Poisson process, one gets an $\alpha$-stable L\'evy process
subordinated to the inverse $\beta$-stable subordinator.

The strategy to prove this result is the following. First, they use Corollary \ref{cor37} and Corollary \ref{cor:3.2}. 
In that case we can only ensure the convergence in $M_1$-topology instead of the convergence in $J_1$-topology because the functional 
inverse of a $\beta$-stable subordinator  $\{D_{\beta}^{-1}(t)\}_{t\geqslant 0}$ is not strictly increasing and Theorem 3.1. of \cite{whitt80} 
can not be applied. Then, they apply the Continuous Mapping Theorem together with Theorem 13.2.4. of \cite{whitt02} that provides a more 
general continuity result in $M_1$-topology and it is useful for proving the convergence in $M_1$-topology.  

By using Corollary 10.12 of \cite{Jacod}, we see that the limit process obtained in (\ref{limitcomp}), 
$X_{\alpha,\beta} (T) :=\{L_{\alpha}(D^{-1}_{\beta}(t))\}_{t\geqslant 0}$, is a semimartingale defined on the probability space 
$(\Omega,\mathcal{F} ,\mathbb{P})$. In fact, $t\rightarrow D^{-1}_{\beta}(t)$ is a non decreasing right-continuous process with 
left limits such that for each fixed $t$, the random variable $D^{-1}_{\beta}(t)$ is a stopping time with respect to the 
standard filtration $\{\F_t\}_{t\geqslant 0}$. Furthermore, $D^{-1}_{\beta}(t)$ is finite $\mathbb{P}$-a.s. for all $t\geqslant 0$ and 
that $D^{-1}_{\beta}(t) \to \infty$ as $t\to \infty$. Then, the process $\{D^{-1}_{\beta}(t)\}_{t\geqslant 0}$ defines a 
finite $\F_t$-time change. Then, it 
means that $X_{\alpha,\beta}$ is a time-changed L\'{e}vy process $\{L_{\alpha}(t)\}_{t\geqslant 0}$ with respect to the functional inverse 
of a $\beta$-stable subordinator, $\{D^{-1}_{\beta}(t)\}_{t\geqslant 0}$.

\begin{lemma}[Corollary 10.12, \cite{Jacod}]
Suppose that $T_t$ is a finite time-change with respect to the standard filtration $\{\F_t\}_{t\geqslant 0}$. 
Then, if $X$ is a semimartingale with respect to $\{\F_t\}_{t\geqslant 0}$, then  $X\circ T$ is a semimartingale 
with respect to the filtration $\{{\mathcal G}_t\}_{t\geqslant 0}$ where ${\mathcal G}_t\DEF {\mathcal F}_{T_t}$.  
\end{lemma}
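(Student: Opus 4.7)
My plan is to reduce the claim to the canonical semimartingale decomposition of $X$. I will write $X=X_{0}+M+A$, where $M$ is an $\F$-local martingale and $A$ is an $\F$-adapted process of locally finite variation, and then show separately that $M\circ T$ is a $\mathcal{G}$-local martingale and that $A\circ T$ is $\mathcal{G}$-adapted with locally finite variation. Before this reduction I would verify that the time-changed filtration behaves well: using the hypothesis that $t\mapsto T_{t}$ is non-decreasing and right-continuous, together with standard results on families of stopping times, one checks that $\mathcal{G}_{t}=\F_{T_{t}}$ is itself right-continuous and that $t\mapsto X_{T_{t}}$ is c\`adl\`ag and $\mathcal{G}$-adapted (any c\`adl\`ag $\F$-adapted process is $\F$-progressive, so $X_{T_{t}}$ is automatically $\F_{T_{t}}$-measurable for every $t$).

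The finite-variation part is immediate: because $T$ is non-decreasing, the total variation of $s\mapsto A_{T_{s}}$ on $[0,t]$ is bounded by the total variation of $A$ on $[0,T_{t}]$, which is finite by assumption, so $A\circ T$ inherits local finite variation. For the local-martingale part I would pick a localizing sequence $(\tau_{n})_{n\geqslant 1}$ of $\F$-stopping times with $\tau_{n}\uparrow\infty$ such that each $M^{\tau_{n}}$ is a uniformly integrable $\F$-martingale, and define the candidate localizers for $M\circ T$ by
\begin{equation*}
\sigma_{n}\DEF \inf\{s\geqslant 0:\; T_{s}\geqslant \tau_{n}\}.
\end{equation*}
Right-continuity of $T$ gives $\{\sigma_{n}\leqslant s\}=\{T_{s}\geqslant \tau_{n}\}\in\F_{T_{s}}=\mathcal{G}_{s}$, so each $\sigma_{n}$ is a $\mathcal{G}$-stopping time, and $\sigma_{n}\uparrow\infty$ follows from $T_{s}<\infty$ almost surely combined with $\tau_{n}\uparrow\infty$.

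The concluding step would be to apply Doob's optional sampling theorem to the uniformly integrable $\F$-martingale $M^{\tau_{n}}$ at the $\F$-stopping times $T_{s\wedge\sigma_{n}}$, which by construction are bounded above by $\tau_{n}$. This should give
\begin{equation*}
\E\bigl[M^{\tau_{n}}_{T_{u\wedge\sigma_{n}}}\mid \F_{T_{s\wedge\sigma_{n}}}\bigr]=M^{\tau_{n}}_{T_{s\wedge\sigma_{n}}}\qquad\text{for } 0\leqslant s\leqslant u,
\end{equation*}
which is exactly the $\mathcal{G}$-martingale property of $(M\circ T)^{\sigma_{n}}$. Sending $n\to\infty$ then upgrades this to the statement that $M\circ T$ is a $\mathcal{G}$-local martingale, and together with the finite-variation analysis of $A\circ T$ this shows that $X\circ T$ is a $\mathcal{G}$-semimartingale. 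I expect the main obstacle to be precisely this last verification: one must justify that $T_{s\wedge\sigma_{n}}$ is genuinely an $\F$-stopping time and that $M^{\tau_{n}}_{T_{s\wedge\sigma_{n}}}$ is integrable, both of which rely crucially on the bound $T_{s\wedge\sigma_{n}}\leqslant \tau_{n}$ engineered into the choice of $\sigma_{n}$.
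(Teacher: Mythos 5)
First, a point of reference: the paper gives no proof of this lemma at all --- it is quoted as Corollaire 10.12 of Jacod's book --- so there is no internal argument to compare yours against; what follows is an assessment of your proposal on its own terms.

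There is a genuine gap, and it sits exactly where you predicted: the bound $T_{s\wedge\sigma_{n}}\leqslant\tau_{n}$ is false. For $s<\sigma_{n}$ one indeed has $T_{s}<\tau_{n}$, but at $s=\sigma_{n}$ right-continuity only gives $T_{\sigma_{n}}\geqslant\tau_{n}$, and since a time change is in general discontinuous, $T_{\sigma_{n}}$ typically overshoots $\tau_{n}$ strictly. Consequently, optional sampling applied to the uniformly integrable martingale $M^{\tau_{n}}$ gives the $\mathcal{G}$-martingale property of $s\mapsto M_{\tau_{n}\wedge T_{s\wedge\sigma_{n}}}$, which is \emph{not} the stopped process $(M\circ T)^{\sigma_{n}}_{s}=M_{T_{s\wedge\sigma_{n}}}$; the two differ precisely on the overshoot event, where $M_{T_{\sigma_{n}}}$ need not even be integrable. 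This is not a repairable technicality but a failure of the whole strategy: time changes do not preserve the (local) martingale property, so the canonical decomposition of $X\circ T$ is not the time change of the canonical decomposition of $X$. A standard counterexample: let $M=B$ be a Brownian motion and $T_{s}=\inf\{u\geqslant 0:B_{u}>s\}$, which is a finite, non-decreasing, right-continuous family of $\F$-stopping times. Then $B_{T_{s}}=s$, so $M\circ T$ is deterministic and strictly increasing --- a $\mathcal{G}$-semimartingale, but certainly not a $\mathcal{G}$-local martingale, and its decomposition ($M$-part zero, $A$-part equal to $s$) bears no relation to that of $B$.

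The parts of your argument that do work are the adaptedness and right-continuity of $\mathcal{G}_{t}=\F_{T_{t}}$, the fact that $\sigma_{n}$ is a $\mathcal{G}$-stopping time with $\sigma_{n}\uparrow\infty$, and the finite-variation half: since $T$ is non-decreasing, $A\circ T$ does inherit locally finite variation. But the conclusion of the lemma cannot be reached by treating $M$ and $A$ separately. The actual proof (in Jacod, and in Kurtz--Protter-type treatments) establishes the stability of the class of semimartingales under finite time changes by a different route: one shows how elementary $\mathcal{G}$-predictable integrands against $X\circ T$ rewrite as $\F$-predictable integrands against $X$, so that stochastic boundedness of elementary integrals --- the Bichteler--Dellacherie characterization of semimartingales as good integrators --- is transferred from $X$ to $X\circ T$; alternatively one reduces to special semimartingales with bounded jumps and controls the compensator through $\F_{T_{t}}$. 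Either way, the key idea your proposal is missing is that one must work with a characterization of semimartingales that is manifestly invariant under monotone reparametrization of time, rather than with the local-martingale-plus-finite-variation decomposition, which is not.
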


Taking into account that $X_{\alpha,\beta} (t)$ is a semimartingale, it makes sense to consider a stochastic integral driven by that
process. The interested reader can also consult \cite{kobayashi2010}.

\subsection{Main theorem (case $\alpha\in(1,2]$)}\label{MTFCLT}

Our strategy to prove the convergence of the following sequence of stochastic integrals as a process in the Skorokhod space endowed with 
the $M_1$-topology and for the case of finite mean, that is $\alpha\in(1,2]$, can be stated as follows:
\begin{enumerate}[(I)]
\item We first check the $M_1$-compactness condition for the integral process $\{I_n(t)\}_{t\geqslant 0}$ defined in (\ref{defIn}).  
This result will be established in Lemma \ref{Lemmamodcont}.
\item Then, we prove the convergence in law for the family of processes $\{I_n(t)\}_{t\geqslant 0}$ when $n\to+\infty$.  
Indeed, we show that $\{X^{(n)}(t)\}_{t\geqslant 0}$ defined in (\ref{Xnn}) is uniformly tight or a \emph{good sequence} 
(see Definition \ref{good} and Lemma \ref{lemmaUT}) and, then, we can apply a combination of Theorems 7.2. and 7.4 by Kurtz and Protter 
given in \cite{KP96II} (see Proposition \ref{propconvlaw1}). 
Finally, applying the Continuous Mapping Theorem and taking the composition function as a continuous mapping, we obtain the desired convergence in law.
\item Taking into account the results proved in the previous steps, we apply Theorem \ref{thm: characM1} of Section \ref{Skortops} 
that provides a useful characterization for the $M_1$-convergence. Eventually, we obtain the main result of the paper stated as Theorem \ref{thm:MainTHM}.
\end{enumerate}

As stated in (I), in the following lemma, we can see that the second hypothesis of Theorem \ref{thm: characM1} is fulfilled.

\begin{lemma}\label{Lemmamodcont}
Let $f\in \mathcal{C}_b(\R)$ be a continuous bounded function on $\R$. Let $\{Y_i\}_{i\in\N}$ be i.i.d. symmetric 
random variables. 
Assume that $Y_1$ belongs to the domain of attraction (DOA) of an $\alpha$-stable random variable $S_{\alpha}$, with $\alpha\in (0,2]$. Let  $\{J_i\}_{i\in \N}$ be i.i.d. and positive such that $J_1$ belongs to the strict DOA of some stable random variables with 
index $\beta\in (0,1)$ and $T_n=\sum_{i=1}^n J_i$. 
Consider
\begin{equation}\label{defIn}
I_n(t):=\sum_{k=1}^{N_{\beta}(nt)}f\left(\frac{T_k}{n}\right)\frac{Y_k}{n^{\beta/\alpha}}.
\end{equation}
If \begin{equation}\label{condmodXn}
\lim_{\delta\to 0}\limsup_{n\to +\infty} w({X}_n,\delta)=0,
\end{equation}
where 
\begin{align*}
{X}_n(t):=\sum_{k=1}^{N_{\beta}(nt)}\frac{Y_k}{n^{\beta/\alpha}}=\sum_{k\geqslant 1}\frac{Y_k}{n^{\beta/\alpha}}\textbf{1}_{\{\tau_k\leqslant t\}},
\end{align*}
with $\tau_k=\inf \{t:N_{\beta}(nt)\geqslant k\}$.
Then, 
\begin{equation}\label{condmodtildeXn}
\lim_{\delta\to 0}\limsup_{n\to +\infty} w(I_n,\delta)=0.
\end{equation}
\end{lemma}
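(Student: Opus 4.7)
The plan is to peel off from $I_n$ the piece that behaves like a constant multiple of $X_n$ and to control the residual by the uniform continuity of $f$ on $[0,T]$. Fix $t\in A$ and a triple $(t-\delta)\vee 0\leq t_1<t_2<t_3\leq (t+\delta)\wedge T$. Setting $c:=f(t_1)$ and, for $j=2,3$,
\begin{equation*}
R_n^{(j)}\,:=\,\sum_{\tau_k\in(t_1,t_j]}\bigl(f(\tau_k)-c\bigr)\,\frac{Y_k}{n^{\beta/\alpha}},
\end{equation*}
one checks the elementary identity, valid for every $a\in[0,1]$,
\begin{equation*}
I_n(t_2)-a\,I_n(t_1)-(1-a)\,I_n(t_3)=c\bigl[X_n(t_2)-a\,X_n(t_1)-(1-a)\,X_n(t_3)\bigr]+R_n^{(2)}-(1-a)\,R_n^{(3)}.
\end{equation*}
Choosing $a$ to minimise the first bracket on the right, and using $|c|\leq\|f\|_\infty$, I would deduce the pathwise bound
\begin{equation*}
\bigl\|I_n(t_2)-[I_n(t_1),I_n(t_3)]\bigr\|\leq \|f\|_\infty\,\bigl\|X_n(t_2)-[X_n(t_1),X_n(t_3)]\bigr\|+|R_n^{(2)}|+|R_n^{(3)}|.
\end{equation*}

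Taking the supremum over admissible triples in the $2\delta$-window and then over $t\in A$ produces
\begin{equation*}
w(I_n,\delta)\leq \|f\|_\infty\,w(X_n,\delta)+2\,\Psi_n(\delta),
\end{equation*}
where $\Psi_n(\delta)$ denotes the supremum over $t\in A$ and admissible $t_1<s$ lying in a $2\delta$-window of $\bigl|\sum_{\tau_k\in(t_1,s]}(f(\tau_k)-f(t_1))Y_k/n^{\beta/\alpha}\bigr|$. The first term on the right vanishes under $\limsup_n$ followed by $\delta\to 0$ by the hypothesis (\ref{condmodXn}), so the whole problem reduces to proving the same for $\Psi_n(\delta)$.

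The main obstacle is controlling $\Psi_n(\delta)$. Uniform continuity of $f$ on $[0,T]$ gives $|f(\tau_k)-f(t_1)|\leq\omega_f(2\delta)\to 0$, but the naive pathwise majorisation $\omega_f(2\delta)\sum_k|Y_k|/n^{\beta/\alpha}$ is not sharp enough when $\alpha>1$, because the total variation of $X_n$ grows of order $n^{\beta(1-1/\alpha)}$. Instead I would use the distributional structure of the symmetric variables $Y_k$: conditionally on the jump-time sequence $(\tau_k)$, the weighted sum $\sum_{\tau_k\in(t_1,s]}g_k Y_k/n^{\beta/\alpha}$ with $|g_k|\leq\omega_f(2\delta)$ behaves, for large $n$, essentially like a symmetric $\alpha$-stable variable of scale at most $C\,\omega_f(2\delta)\bigl((N_\beta(ns)-N_\beta(nt_1))/n^\beta\bigr)^{1/\alpha}$. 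Combining a L\'evy-type maximal inequality for such sums with Corollary \ref{cor37}, which keeps $(N_\beta(n(t+\delta))-N_\beta(n(t-\delta)))/n^\beta$ tight of order $O(\delta)$, and then covering $[0,T]$ by $O(1/\delta)$ windows of length $\delta$ and applying a union bound, I would obtain a tail estimate roughly of the form $\mathbb{P}(\Psi_n(\delta)>\eta)\lesssim T\,\omega_f(2\delta)^\alpha/\eta^\alpha$. Letting $n\to\infty$ and then $\delta\to 0$ makes this vanish, which together with the hypothesis on $X_n$ yields (\ref{condmodtildeXn}).
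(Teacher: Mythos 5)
Your route is genuinely different from the paper's, and in fact more careful. The paper's proof disposes of the whole lemma with the single pathwise inequality $|I_n(t_j)-I_n(t_i)|\leqslant C_f\,|X_n(t_j)-X_n(t_i)|$, i.e.\ it pulls the bounded factor $f(T_k/n)$ out of the signed sum $\sum_k f(T_k/n)Y_k n^{-\beta/\alpha}\mathbf{1}_{\{t_i<\tau_k\leqslant t_j\}}$ and concludes $w(I_n,\delta)\leqslant C_f\,w(X_n,\delta)$ directly. That inequality is not valid in general (two jumps with $Y_1=1$, $Y_2=-1$ and $f$ taking values $1$ and $0$ at the two jump times give a nonzero left side and a zero right side), so the cancellation issue you isolate is a real one, and your decomposition $I_n(t_2)-aI_n(t_1)-(1-a)I_n(t_3)=c[X_n(t_2)-aX_n(t_1)-(1-a)X_n(t_3)]+R_n^{(2)}-(1-a)R_n^{(3)}$ with the coefficient frozen at $c=f(t_1)$ is exactly the right repair: it yields the legitimate bound $w(I_n,\delta)\leqslant \|f\|_\infty\,w(X_n,\delta)+2\Psi_n(\delta)$ and correctly transfers the hypothesis \eqref{condmodXn} to the first term. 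What your argument buys is a proof that can actually be closed; what it costs is the residual term $\Psi_n(\delta)$, which the paper's (flawed) shortcut never has to face. Two caveats on your treatment of $\Psi_n(\delta)$: first, the step ``behaves essentially like a symmetric $\alpha$-stable variable of scale $C\,\omega_f(2\delta)(\cdot)^{1/\alpha}$'' is a sketch --- to make it rigorous you should condition on the waiting times (legitimate, since the $Y_k$ are independent of the $J_k$), invoke the contraction principle for symmetric sequences to replace the weights $f(\tau_k)-f(t_1)$ by the constant $\omega_f(2\delta)$, and then use a L\'evy--Ottaviani maximal inequality together with a tail bound for the normalized partial sums that is uniform in $n$ (the $Y_k$ are only in the domain of attraction, not exactly stable), with the count of summands in each window controlled via Corollary \ref{cor37}; second, your conclusion is a tail estimate $\mathbb{P}(\Psi_n(\delta)>\eta)\lesssim T\,\omega_f(2\delta)^\alpha/\eta^\alpha$, so you are proving \eqref{condmodtildeXn} in probability rather than pathwise --- that is the form actually needed in \eqref{wslim}, but you should say so explicitly and read the hypothesis \eqref{condmodXn} in the same sense. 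Your observation that the naive majorisation by $\omega_f(2\delta)\sum_k|Y_k|/n^{\beta/\alpha}$ fails precisely for $\alpha>1$ (total variation of order $n^{\beta(1-1/\alpha)}$) is correct and worth keeping.
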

\begin{proof}
We can rewrite $I_n(t)$ as follows
\begin{align*}
I_n(t)=\sum_{k\geqslant 1}f\left(\frac{T_k}{n}\right)\frac{Y_k}{n^{\beta/\alpha}}\textbf{1}_{\{\tau_k\leqslant t\}},
\end{align*}
with $\tau_k=\inf \{t:N_{\beta}(nt)\geqslant k\}=\inf \{t:\frac{T_k}{n}\leqslant t\}$.

\noindent We want to compute $w(I_n,\delta)$. Recall that, by definition (see (\ref{wssup1}) and (\ref{wssup2})), 
\begin{equation}
w(I_n,\delta)=\sup_{t\in S}w(I_n,t,\delta),
\end{equation}
with $S\subset [0,+\infty)$ and 
\begin{equation}\label{wssup2}
w(I_n,t,\delta):=\sup_{0\vee (t-\delta)\leqslant t_1<t_2<t_3\leqslant (t+\delta) \wedge T}\{\|I_n(t_2)-[I_n(t_1),I_n(t_3)]\|\},
\end{equation}
where $\|\cdot\|$ denotes the norm defined in (\ref{normm1}).
In other words, $\|I_n(t_2)-[I_n(t_1),I_n(t_3)]\|$ denotes the distance from $I_{n}(t_2)$ to the segment $[I_n(t_1),I_n(t_3)]$; this is
given by
\[\|I_n(t_2)-[I_n(t_1),I_n(t_3)]\|=\left\{\begin{array}{cc}
0 & \mbox{if $I_n(t_2)\in [I_n(t_1),I_n(t_3)]$},\\
|I_n(t_1)-I_{n}(t_2)|\wedge|I_n(t_3)-I_{n}(t_2)| &\mbox{if $I_n(t_2)\notin [I_n(t_1),I_n(t_3)].$}
\end{array}\right.\]

\noindent Now, taking into account that the function $f$ is bounded and for $i<j$,
\begin{align*}
|I_n(t_j)-I_n(t_i)|&=\left|\sum_{k\geqslant 1}f\left(\frac{T_k}{n}\right)\frac{Y_k}{n^{\beta/\alpha}}\textbf{1}_{\{t_i<\tau_k\leqslant t_j\}}\right|
\\&\leqslant C_f\left|\sum_{k\geqslant 1}\frac{Y_k}{n^{\beta/\alpha}}\textbf{1}_{\{t_i<\tau_k\leqslant t_j\}}\right|
\\&=C_f\left|X_n(t_i)-X_n(t_j)\right|,
\end{align*}
where $C_f>0$ is a positive constant depending on the bounds of the function $f$ (remember that the sum has a finite number of terms).

\noindent For $0\vee (t-\delta)\leqslant t_1<t_2<t_3\leqslant (t+\delta) \wedge T$, we have that 
$$
0 \leqslant \|I_n(t_2)-[I_n(t_1),I_n(t_3)]\|\leqslant C_f\|{X}_n(t_2)-[{X}_n(t_1),{X}_n(t_3)]\|.
$$
This means that
\begin{equation}
0 \leqslant w(I_n,\delta)\leqslant C_f\;  w(X_n,\delta),
\end{equation}
and taking the limits on both sides
\begin{equation}
0 \leqslant \lim_{\delta\to 0}\limsup_{n\to +\infty} w(I_n,\delta)\leqslant C_f\;  \lim_{\delta\to 0}\limsup_{n\to +\infty} w(X_n,\delta)=0.
\end{equation}
Thus,  it follows that
\begin{equation*}
\lim_{\delta\to 0}\limsup_{n\to +\infty} w(I_n,\delta)=0.
\end{equation*}
\end{proof}

\vspace{1cm}

\noindent Now, to see the convergence in the $M_1$-topology it only remains to prove the following convergence of the finite-dimensional distributions:
\begin{align*}
\sum_{k=1}^{N_{\beta}(nt)}f\left(\frac{T_k}{n}\right)\frac{Y_k}{n^{\beta/\alpha}}\overset{\mathcal{L}}{\Rightarrow} \int_0^t f(s)dL_{\alpha}(D_{\beta}^{-1}(s)), \qquad n\to +\infty.
\end{align*}

A fundamental question is to know under what conditions the convergence in law of $(H^n,X^n)$ to $(H,X)$ implies that $X$ is a semimartingale and that $\int_0^tH^n(s-)dX_s^n
$ converges in law to $\int_0^tH(s-)dX_s$. In this framework, we introduce the concept of \textsl{good sequence}.

\begin{definition}[p.2, \cite{KP96I}]\label{good}
Let $(X^n)_{n\in \N}$ be an $\R^k$-valued process defined on probability space $(\Omega^n, \F^n, \mathbb{P}^n)$ 
such that it is $\F_t^n$-semimartingale. Let the sequence $(X^n)_{n\in \N}$ converge in distribution in the Skorokhod topology 
to a process $X$. The sequence $(X^n)_{n\in \N}$ is said to be good if for any sequence $(H^n)_{n\in \N}$ of $\M^{km}$-valued 
(which denotes the real-valued $k\times m$ matrices) c\`{a}dl\`{a}g processes, $H^n$ $\F_t^n$-adapted, such that $(H^n,X^n)$ converges in distribution in the Skorokhod topology on $D_{\M^{km}\times\R^m}([0,\infty))$ to a process $(H,X)$, there exists a filtration $\F_t$ such that $H$ is $\F_t$-adapted, $X$ is an $\F_t$-semimartingale, and 
\[\int_0^tH^n(s-)dX_s^n\overset{\mathcal{L}}{\Rightarrow}\int_0^tH(s-)dX_s,\]
when $n\to \infty$.
\end{definition}
Jakubowski, M\'{e}min and Pag\`{e}s \cite{JMP89} give a sufficient condition for a sequence $(X^n)_{n\in \N}$ to be good called 
\textsl{uniform tightness} (UT). This condition uses the characterization of a semimartingale as a good integrator 
(see Protter \cite{Protter}), and it holds uniformly in $n$.

\noindent In \cite{JMP89} and \cite{KP96I}, the following lemma was proved.
\begin{lemma}\label{UT1}
If $(X^n)_{n \in \N}$ is a sequence of local martingales and the following condition
\begin{equation}\label{condUT1}
\sup_{n}\E^n\left[\sup_{s\leqslant t}|\Delta X^n (s)|\right]<+\infty
\end{equation}
holds for each $t < + \infty$, where 
\begin{equation}\label{Deltan}
\Delta X^n (s):=X^n (s)-X^n (s-)
\end{equation}
denotes the jump of $X^n$ in $s$, then the sequence is uniformly tight.
\end{lemma}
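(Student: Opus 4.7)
The plan is to combine the Davis decomposition for local martingales with the standard sufficient condition of \cite{JMP89} for uniform tightness, namely that a sequence of semimartingales is UT as soon as it can be written as $M^n + A^n$ where $(M^n)$ is a sequence of local martingales with uniformly integrable jump suprema and $(A^n)$ is a sequence of predictable finite-variation processes with uniformly integrable total variation. Fix $t > 0$ and, for each $n$, apply the Davis decomposition on $[0,t]$ to write $X^n = M^n + A^n$, where $M^n$ is a local martingale satisfying $|\Delta M^n_s| \leqslant 2 \sup_{u \leqslant s} |\Delta X^n_u|$, and $A^n$ is a predictable finite-variation process whose total variation obeys
$$\E^n\bigl[\mathrm{Var}(A^n)_t\bigr] \leqslant C\, \E^n\Bigl[\sup_{s\leqslant t}|\Delta X^n_s|\Bigr]$$
for some universal constant $C$. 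Hypothesis (\ref{condUT1}) then turns into the two uniform estimates $\sup_n \E^n[\mathrm{Var}(A^n)_t] < +\infty$ and $\sup_n \E^n[\sup_{s\leqslant t}|\Delta M^n_s|] < +\infty$.

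For the finite-variation piece $A^n$, uniform tightness is immediate: if $H$ is a simple predictable process with $|H|\leqslant 1$, then $\left|\int_0^t H_{s-}\,dA^n_s\right|\leqslant \mathrm{Var}(A^n)_t$, and Markov's inequality combined with the uniform bound on its expectation gives tightness uniformly in $n$ and $H$. For the local martingale piece $M^n$, I would invoke Davis's $L^1$ form of the Burkholder--Davis--Gundy inequality, which bounds $\E^n[[M^n]_t^{1/2}]$ by a constant times $\E^n[\sup_{s\leqslant t}|\Delta M^n_s|]$, and then note that for any simple predictable $H$ with $|H|\leqslant 1$, the stochastic integral $H\cdot M^n$ is again a local martingale with $[H\cdot M^n]_t\leqslant [M^n]_t$, so that
$$\E^n\Bigl[\sup_{s\leqslant t}\bigl|(H\cdot M^n)_s\bigr|\Bigr]\leqslant C'\,\E^n\bigl[[M^n]_t^{1/2}\bigr]$$
is uniformly bounded in both $n$ and $H$. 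Another application of Markov yields the tightness required for UT of $(M^n)$. Since UT is preserved by finite sums, combining the two pieces yields UT of the sequence $(X^n)_{n\in \N}$.

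The main obstacle I expect is the sharp $L^1$ estimate for the martingale part: the hypothesis lives at the $L^1$ level, so the more familiar $L^2$ form of BDG is not directly applicable, and one must rely on Davis's finer inequality together with the precise control on $|\Delta M^n|$ provided by the Davis decomposition. Once this passage is carried out, everything else reduces to routine applications of Markov's inequality and the stability of UT under finite sums recorded in \cite{JMP89}.
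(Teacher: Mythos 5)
The paper does not actually prove this lemma---it only cites \cite{JMP89} and \cite{KP96I}---so your argument has to stand on its own, and it does not. The fatal step is the inequality you attribute to Davis's $L^1$ form of Burkholder--Davis--Gundy, namely that $\E^n\bigl[[M^n]_t^{1/2}\bigr]$ is bounded by a constant times $\E^n\bigl[\sup_{s\leqslant t}|\Delta M^n_s|\bigr]$. No such inequality exists: BDG compares $\E\bigl[[M]_t^{1/2}\bigr]$ with $\E\bigl[\sup_{s\leqslant t}|M_s|\bigr]$, the running supremum of the \emph{process}, not of its \emph{jumps}. A continuous local martingale has no jumps at all yet arbitrarily large quadratic variation, and the Davis decomposition gives no control whatever on the continuous martingale part of $M^n$. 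Indeed, read literally, the lemma is false without a further hypothesis: take $X^n=nB$ with $B$ a standard Brownian motion; then $\sup_{s\leqslant t}|\Delta X^n(s)|\equiv 0$, so condition (\ref{condUT1}) holds trivially, yet with $H^n\equiv 1$ the integrals $nB_t$ are not stochastically bounded, so $(X^n)$ is not uniformly tight. (A smaller slip: in the Davis decomposition the finite-variation part $A^n=X^n-M^n$ is itself a local martingale of integrable variation, not a predictable process---a predictable finite-variation local martingale is constant---though this does not affect your estimate for that piece, which is fine.)

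What the cited references actually prove carries the standing assumption that $(X^n)$ converges in distribution (which is how the lemma is used in this paper, where $X^{(n)}\Rightarrow L_\alpha$), and that assumption is exactly what your argument is missing. Convergence in law makes $\sup_{s\leqslant t}|X^n_s|$ stochastically bounded uniformly in $n$, so one localizes with $\tau_n^K:=\inf\{s:|X^n_s|\geqslant K\}$: the stopped process satisfies $\sup_{s\leqslant t\wedge\tau_n^K}|X^n_s|\leqslant K+\sup_{s\leqslant t}|\Delta X^n_s|$, whose expectation is bounded uniformly in $n$ by (\ref{condUT1}); the stopped processes are then true martingales with uniformly bounded $L^1$ suprema, BDG in its correct form bounds $\E^n\bigl[[X^n]_{t\wedge\tau_n^K}^{1/2}\bigr]$ and hence $\E^n\bigl[\sup_{s\leqslant t}|(H\cdot X^n)_{s\wedge\tau_n^K}|\bigr]$ uniformly in $n$ and in $|H|\leqslant 1$, and Markov's inequality together with $\mathbb{P}(\tau_n^K\leqslant t)\to 0$ as $K\to\infty$, uniformly in $n$, yields uniform tightness. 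Your treatment of the finite-variation piece is correct; it is the martingale piece that requires localization at the level of the running supremum of the process rather than of its jumps.
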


\noindent In the following lemma we show that the sequence $(X^{(n)})_{n\in \N}$ defined in (\ref{Xnn}) is uniformly tight.
In this case, we have to restrict on $\alpha\in (1,2]$ if we want to prove the UT property in terms of the definition given before because in this case $Y_i$ has finite moments. In the next section, we will see how to avoid this problem and we will extend the corresponding functional theorem to $\alpha\in(0,2]$.
\begin{lemma}\label{lemmaUT}
Assume that $(Y_i)_{i\in \N}$ be i.i.d. symmetric $\alpha$-stable random variables, with $\alpha\in (1,2]$. Let 
\begin{equation}\label{Xnn}
X^{(n)}(t):=\sum_{i=1}^{\lfloor n^{\beta} t\rfloor} \frac{Y_i}{n^{\beta/\alpha}}
\end{equation} 
be defined on the probability space $(\Omega^n, \F^n,\mathbb{P}^n)$. Then $X^{(n)}(t)$ is a $\F^n_t$-martingale 
(with respect the natural filtration of $X^{(n)}$) and
\[\sup_{n}\E^n\left[\sup_{s\leqslant t}|\Delta X^{(n)}(s)|\right]<+\infty,\] for each $t<+\infty$,
where, as before
$
\Delta X^{(n)}(s):=X^{(n)}(s)-X^{(n)}(s-)$. Moreover, the elements of the sequence are
$\F^n_t$-semimartingales and the limit is also a semimartingale. 
Therefore, the sequence \eqref{Xnn} is uniformly tight as a consequence
of Lemma \ref{UT1} and it is good.
\end{lemma}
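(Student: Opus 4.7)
The plan is to verify the three claims in sequence: the martingale property, the uniform bound on the expected largest jump, and the resulting uniform tightness and goodness.

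\emph{Martingale.} The process $X^{(n)}$ is piecewise constant and c\`{a}dl\`{a}g, with jumps only at the deterministic times $i/n^{\beta}$, $i\in\N$, of size $Y_i/n^{\beta/\alpha}$. Because $\alpha\in(1,2]$, the symmetric $\alpha$-stable variables $Y_i$ are integrable with zero mean, so for $s<t$ the new summands $Y_i$ with $\lfloor n^{\beta}s\rfloor<i\leqslant \lfloor n^{\beta}t\rfloor$ are independent of $\F^{n}_{s}$ and
$$\E^{n}\left[X^{(n)}(t)-X^{(n)}(s)\,\big|\,\F^{n}_{s}\right]=\frac{1}{n^{\beta/\alpha}}\sum_{i=\lfloor n^{\beta}s\rfloor+1}^{\lfloor n^{\beta}t\rfloor}\E^{n}[Y_i]=0.$$

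\emph{Maximal jump.} Since the jumps are concentrated at isolated deterministic times,
$$\sup_{s\leqslant t}\bigl|\Delta X^{(n)}(s)\bigr|=\frac{1}{n^{\beta/\alpha}}\max_{1\leqslant i\leqslant \lfloor n^{\beta}t\rfloor}|Y_i|.$$
Setting $N=\lfloor n^{\beta}t\rfloor$, the task reduces to showing $\E[\max_{i\leqslant N}|Y_i|]=O(N^{1/\alpha})$ uniformly in $N$. For $\alpha=2$ the $Y_i$ are Gaussian and the elementary bound $\E[\max_{i\leqslant N}|Y_i|]=O(\sqrt{\log N})$ is more than enough. For $\alpha\in(1,2)$, the symmetric $\alpha$-stable law has Pareto-type tails $\mathbb{P}(|Y_1|>x)\sim C x^{-\alpha}$, and splitting the integral $\E[\max_i |Y_i|]=\int_0^{+\infty}\bigl[1-(1-\mathbb{P}(|Y_1|>x))^{N}\bigr]\,dx$ at the threshold $x=N^{1/\alpha}$, using the integrand bounds $1$ below and $N\mathbb{P}(|Y_1|>x)$ above, yields $\E[\max_{i\leqslant N}|Y_i|]\leqslant C' N^{1/\alpha}$. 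Dividing by $n^{\beta/\alpha}$ produces a bound of the form $C''\,t^{1/\alpha}$, uniform in $n$.

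\emph{UT and goodness.} The two preceding steps place us in the hypotheses of Lemma \ref{UT1}: each $X^{(n)}$ is a genuine martingale, and $\sup_{n}\E^{n}[\sup_{s\leqslant t}|\Delta X^{(n)}(s)|]<+\infty$ for every fixed $t$. Hence $(X^{(n)})_{n\in\N}$ is uniformly tight. Since each $X^{(n)}$ and the limit process $L_\alpha$ are semimartingales, the Jakubowski--M\'{e}min--Pag\`{e}s criterion (cf.\ \cite{JMP89,KP96I}) upgrades uniform tightness to goodness in the sense of Definition \ref{good}.

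The principal obstacle is the tail estimate of the second step in the regime $\alpha\in(1,2)$: the heavy tails preclude the trivial $\max\leqslant\sum$ bound, so one must genuinely exploit the Pareto asymptotics. This is the point where the hypothesis $\alpha>1$ plays two roles simultaneously: it secures $Y_i\in L^1$ (hence the vanishing conditional expectation in the martingale step), while still leaving the tails heavy enough that $\max_{i\leqslant N}|Y_i|$ scales like $N^{1/\alpha}$ rather than like $N$, which is what makes the scaling $n^{\beta/\alpha}$ exactly cancel.
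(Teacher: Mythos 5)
Your proof is correct, and the martingale step and the final UT/goodness step coincide with the paper's. Where you genuinely diverge is in the key estimate $\sup_n\E^n[\sup_{s\leqslant t}|\Delta X^{(n)}(s)|]<+\infty$. The paper symmetrizes the sum with Rademacher signs, applies L\'evy's maximal inequality to dominate $\E^n[\max_{i\leqslant \lfloor n^{\beta}t\rfloor}|Y_i|/n^{\beta/\alpha}]$ by $2\,\E^n|\widetilde{S}(n^{\beta}t)/n^{\beta/\alpha}|$, and then invokes convergence of the first absolute moments of the normalized sums to $\E|S_\alpha|$ to get a bound uniform in large $n$ (with a crude triangle-inequality bound for the finitely many small $n$). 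You instead attack the maximum directly, writing $\E[\max_{i\leqslant N}|Y_i|]=\int_0^{\infty}\bigl[1-(1-\mathbb{P}(|Y_1|>x))^{N}\bigr]\,dx$ and splitting at $x=N^{1/\alpha}$ using the Pareto tail $\mathbb{P}(|Y_1|>x)\sim Cx^{-\alpha}$; the upper piece is controlled by $N\mathbb{P}(|Y_1|>x)$ and converges because $\alpha>1$, yielding $O(N^{1/\alpha})$ and hence the uniform bound $C''t^{1/\alpha}$ in one stroke. Your route is more elementary and self-contained for the stated hypothesis (the $Y_i$ are exactly symmetric $\alpha$-stable, so the tail asymptotics are available and no moment-convergence result needs to be imported), and it avoids the paper's two-regime case split in $n$. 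The paper's symmetrization argument, on the other hand, extends more readily to jumps that are merely in the domain of attraction of a stable law (the setting used elsewhere in the paper), where exact Pareto tails are replaced by regularly varying ones and the moment-convergence step does the work. The only cosmetic omission on your side is the explicit check that $X^{(n)}(t)\in L^1$, which is immediate from $\E|Y_1|<\infty$ and which the paper records before verifying the martingale property.
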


\begin{proof}
This proof is inspired by the proof of Theorem 2 in \cite{Burr2011}.

Let $\F^n_{t}=\sigma\{X^{(n)}(s),\; s\in[0,t]\}$, the natural filtration of $X^{(n)}$.  We will check that $X^{(n)}:=\{X^{(n)}(t)\}_{n\in\N}$ is an $\F^n_{t}$-martingale.  

Observe that $X^{(n)}(t)\in L^1$ for all $t\geqslant 0$,
\[\E^n\left|X^{(n)}(t)\right|=\E^n\left|\sum_{i=1}^{\lfloor n^{\beta} t\rfloor} \frac{Y_i}{n^{\beta/\alpha}}\right|\leqslant\sum_{i=1}^{\lfloor n^{\beta} t\rfloor }\E^n\left| \frac{Y_i}{n^{\beta/\alpha}}\right| =\frac{\lfloor n^{\beta} t\rfloor}{n^{\beta}}\E^n\left| \frac{Y_1}{n^{1/\alpha}}\right|<+\infty,\]
The process $X^{(n)}$ is $\F^n_{t}$-adapted. It follows that 
\begin{align*}
\E^n[ X^{(n)}(t)|\F^n_s]&=\E^n[ X^{(n)}(t)-X^{(n)}(s)|\F^n_s]+\E^n[ X^{(n)}(s)|\F^n_s]
\\&=\E^n\left[\sum_{i=\lfloor n^{\beta} s\rfloor+1}^{\lfloor n^{\beta} t\rfloor}\frac{Y_i}{n^{\beta/\alpha}} \big|\F^n_s\right]+X^{(n)}(s)
\\&=(\lfloor n^{\beta} t\rfloor - \lfloor n^{\beta}s\rfloor)\E^n\left[\frac{Y_1}{n^{\beta/\alpha}} \right]+X^{(n)}(s)
\\&=(\lfloor t\rfloor- \lfloor s\rfloor)\E^n\left[\frac{Y_1}{n^{1/\alpha}} \right]+X^{(n)}(s)=X^{(n)}(s),
\end{align*}
because $\E^n\left[\frac{Y_1}{n^{1/\alpha}} \right]=0$. Moreover, the process $X^{(n)} (t)$ is a pure jump process, therefore it is
a semimartingale and the limit for $n \to \infty$ of the sequence is an $\alpha$-stable L\'evy process which is also a semimartingale.
We shall return on this point in Proposition \ref{propconvlaw1}.

\noindent Now, we want to verify the following condition
 \[\sup_n \E^n[\sup_{s\leqslant t}|\Delta X^{(n)}(s)|]<\infty\] for each $t<\infty$.  
 
 Fix $t < \infty$.  Observe that $X^{(n)}(s)$ is a c\`{a}dl\`{a}g step process with jumps of size $|\frac{Y_i}{n^{\beta/\alpha}}|$ at 
times $i/n^{\beta}$, $i\in \N$.  Moreover, since $X^{(n)}(t)$ has finitely many jumps by time $t$, the supremum up to time $t$ can be 
replaced by a maximum up to $\lfloor n^{\beta}t \rfloor$.  So for each $t$ we need to find a uniform bound in $n$ for 
$\E^n\Big[\max_{1\leqslant i\leqslant \lfloor n^{\beta}t \rfloor}\big|\frac{Y_i}{n^{\beta/\alpha}}\big|\Big]$. 
 
We will symmetrize the sum $S(nt):=\sum_{i=1}^{\lfloor n^{\beta}t\rfloor} Y_i$. For this reason, let $(\epsilon_i)_{i\in \N}$ be a sequence of 
i.i.d. Rademacher random variables, independent of $(Y_i)_{i\in \N}$.  The Rademacher random variables are independent uniform random variables 
taking values in the set $\{-1,1\}$.Then  $|Y_i|=|\epsilon_iY_i|$ and the corresponding products $\epsilon_iY_i$ are i.i.d. in the domain of 
attraction of an $\alpha$-stable symmetric distribution.  Let $\displaystyle \widetilde{S}(nt)=\sum_{i=1}^{\lfloor n^{\beta}t \rfloor} \epsilon_iY_i$ 
be the symmetrized sum.  By using the fact that  $|Y_i|=|\epsilon_iY_i|$ and L\'{e}vy's Inequality, we have that
\begin{align}\label{expect1}
\E^n\Bigg[\max_{1\leqslant i\leqslant \lfloor n^{\beta}t \rfloor}\bigg|\frac{Y_i}{n^{\beta/\alpha}}\bigg|\Bigg]
&=\int_0^\infty \mathbb{P}^n\Bigg\{\max_{1\leqslant i\leqslant \lfloor n^{\beta}t \rfloor}\bigg|\frac{Y_i}{n^{\beta/\alpha}}\bigg| > x \Bigg\} \text{ }dx\nonumber\\
&=\int_0^\infty \mathbb{P}^n\Bigg\{\max_{1\leqslant i\leqslant \lfloor n^{\beta}t \rfloor}\bigg|\frac{\epsilon_iY_i}{n^{\beta/\alpha}}\bigg| > x \Bigg\} \text{ }dx\nonumber\\
&\leqslant \int_0^\infty 2\mathbb{P}^n\Bigg\{\bigg|\sum_{i=1}^{\lfloor n^{\beta}t \rfloor}\frac{\epsilon_iY_i}{n^{\beta/\alpha}}\bigg| > x\Bigg\} \text{ }dx\nonumber
\\&= 2\E^n\Bigg|\frac{\widetilde{S}(n^{\beta}t)}{n^{\beta/\alpha}} \Bigg|.
\end{align}

\noindent For each $n$, we have 
\begin{equation}\label{boundexpfin}
\E^n\bigg|\frac{\widetilde{S}(n^{\beta}t)}{n^{\beta/\alpha}} \bigg| \leqslant \sum_{i=1}^{\lfloor n^{\beta}t \rfloor}
\E^n\bigg|\frac{\epsilon_iY_i}{n^{\beta/\alpha}}\bigg| = \lfloor n^{\beta}t \rfloor \E^n\bigg|\frac{Y_1}{n^{\beta/\alpha}}\bigg| < \infty.
\end{equation}
Thus,  the expectation is bounded if $n$ is small.  It remains to show that the expectation is bounded in the case of large $n$.  Notice that
\begin{equation*}
\E^n\bigg|\frac{\widetilde{S}(n^{\beta}t)}{n^{\beta/\alpha}} \bigg| =
\bigg(\frac{\lfloor n^{\beta}t \rfloor}{n^{\beta}}\bigg)^{1/\alpha}
\E^n\bigg|\frac{\widetilde{S}(n^{\beta}t)}{\lfloor n^{\beta/\alpha}t\rfloor} \bigg|.
\end{equation*}

\noindent Taking into account that $\epsilon_iY_i$ are in the domain of attraction of some $\alpha$-stable law for $\alpha \in (1,2]$, then the following first moments converge: 
\begin{equation}
\E^n\Bigg|\frac{\widetilde{S}(n^{\beta}t)}{\lfloor  n^{\beta/\alpha}t \rfloor} \Bigg| \rightarrow \E|S_{\alpha}| \text{ as } n\rightarrow \infty.
\label{eqn:conv_moments}
\end{equation}

\noindent Then applying Equation (\ref{eqn:conv_moments}), there exists some $n_0 \in \N$ such that for all $n > n_0$,
\begin{equation}\label{boundexpinf}
\E^n\bigg|\frac{\widetilde{S}(n^{\beta}t)}{n^{\beta/\alpha}} \bigg| \leqslant ct^{1/\alpha}(\E|S_\alpha| + 1) < \infty ,
\end{equation}
for $c>1$, where $S_\alpha$ has a finite first moment since $\alpha\in (1,2]$.

\noindent So this means that  the desired condition holds. In fact, as a consequence of the supremum definition, we can write the following equality
\begin{align*}
\sup_n \E^n\bigg\{\sup_{s\leqslant t}|\Delta X^{(n)}(s)|\bigg\}\notag
&= \max\bigg(\max_{1\leqslant n\leqslant n_0} \E^n\Big\{\sup_{s\leqslant t}|\Delta X^{(n)}(s)|\Big\},
\sup_{n>n_0} \E^n\Big\{\sup_{s\leqslant t}|\Delta X^{(n)}(s)|\Big\}\bigg).
\end{align*}
Now, replacing (\ref{boundexpfin}) and (\ref{boundexpinf}) into (\ref{expect1}) we obtain the condition that we want to prove
\begin{align*}
\sup_n \E^n\bigg\{\sup_{s\leqslant t}|\Delta X^{(n)}(s)|\bigg\}\notag
&\leqslant \max_{1\leqslant n\leqslant n_0}2\lfloor t \rfloor \E^n\bigg|\frac{Y_1}{n^{1/\alpha}}\bigg|
+ 2ct^{1/\alpha}(\E|S_\alpha| + 1)< \infty.
\end{align*}
\end{proof}

Now, we prove the following result on convergence in law for the case $\alpha\in (1,2]$ . 
\begin{proposition}\label{propconvlaw1}
Let $f\in \mathcal{C}_b(\R)$ be a continuous bounded function on $\R$. Under the distributional assumptions and the scaling considered above  we have that 
\[\left\{\sum_{i=1}^{\lfloor n^{\beta}t\rfloor}f\left(\frac{T_i}{n}\right)\frac{Y_i}{n^{\beta/\alpha}}\right\}_{t\geqslant 0}\overset{J_1-top}{\Rightarrow}\left\{\int_0^t f(D_{\beta}(s))dL_{\alpha}(s)\right\}_{t\geqslant 0},\]
when $n\to +\infty$.
\end{proposition}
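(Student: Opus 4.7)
The plan is to rewrite the discrete sum as a stochastic integral $\int_0^{t}H^{(n)}(s-)\,dX^{(n)}(s)$, to establish joint Skorokhod $J_1$-convergence of the integrand-integrator pair to $(f\circ D_{\beta},L_{\alpha})$, and finally to invoke the Kurtz--Protter stability theorem for integrals against a good sequence, which is precisely what Lemma \ref{lemmaUT} delivers for $(X^{(n)})_{n\in\N}$.

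First, with $X^{(n)}$ as in (\ref{Xnn}) and the c\`adl\`ag integrand
\[
H^{(n)}(s)\DEF f\!\left(\frac{T_{\lfloor n^{\beta}s\rfloor+1}}{n}\right),\qquad s\geqslant 0,
\]
the process $X^{(n)}$ jumps at $s=i/n^{\beta}$ by $Y_i/n^{\beta/\alpha}$ while $H^{(n)}((i/n^{\beta})-)=f(T_i/n)$, and a pathwise computation gives
\[
\int_0^{t}H^{(n)}(s-)\,dX^{(n)}(s)=\sum_{i=1}^{\lfloor n^{\beta}t\rfloor}f\!\left(\frac{T_i}{n}\right)\frac{Y_i}{n^{\beta/\alpha}}.
\]
For the integrator, Corollary \ref{cor:3.2} applied with $c=n^{\beta}$ yields $X^{(n)}\overset{J_1-top}{\Rightarrow}L_{\alpha}$. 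For the integrand, (\ref{eqstablsubqv}) with $c=n^{\beta}$ gives $n^{-1}T_{\lfloor n^{\beta}\cdot\rfloor}\Rightarrow D_{\beta}(\cdot)$ in finite-dimensional distributions; since the approximants are non-decreasing and the $\beta$-stable subordinator has strictly increasing sample paths almost surely, this upgrades to $J_1$-convergence (in the spirit of the arguments in \cite{MS04,MS01}). Continuity of $f$ together with the continuous mapping theorem applied to the composition map (which is continuous in $J_1$ at non-decreasing limits) yields $H^{(n)}\overset{J_1-top}{\Rightarrow}f\circ D_{\beta}$.

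Independence of the families $\{Y_i\}_{i\in\N}$ and $\{J_i\}_{i\in\N}$ makes $X^{(n)}$ and $H^{(n)}$ independent for every $n$, and the limits $L_{\alpha}$ and $f\circ D_{\beta}$ independent as well; marginal convergence to independent limits therefore promotes to joint convergence
\[
(H^{(n)},X^{(n)})\overset{J_1-top}{\Rightarrow}(f\circ D_{\beta},L_{\alpha})
\]
in the Skorokhod space on the product. By Lemma \ref{lemmaUT}, $(X^{(n)})_{n\in\N}$ is uniformly tight, hence a good sequence in the sense of Definition \ref{good}; moreover, enlarging the filtration to include the $J_i$'s preserves the martingale property of $X^{(n)}$ (because the $Y_i$'s remain independent of this enlargement) and makes $H^{(n)}$ automatically adapted. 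Theorems 7.2 and 7.4 of \cite{KP96II} then apply and yield
\[
\int_0^{t}H^{(n)}(s-)\,dX^{(n)}(s)\;\overset{J_1-top}{\Rightarrow}\;\int_0^{t}f(D_{\beta}(s))\,dL_{\alpha}(s),
\]
which is the claim.

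The step I expect to require the most care is the functional ($J_1$) convergence of the integrand: lifting the finite-dimensional statement (\ref{eqstablsubqv}) to a genuine Skorokhod-$J_1$ convergence (via monotonicity of the approximants and strict monotonicity of $D_{\beta}$), and then combining the two marginal $J_1$-convergences into a joint one through independence so that the Kurtz--Protter hypotheses are met with an adapted integrand. Once these are secured, the remaining steps are straightforward applications of the continuity and stability results already in the literature.
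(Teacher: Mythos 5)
Your proposal is correct and follows essentially the same route as the paper's own proof: continuous mapping to get $J_1$-convergence of the integrand $f(T_{\cdot}/n)\Rightarrow f(D_\beta(\cdot))$, uniform tightness (goodness) of $(X^{(n)})_{n\in\N}$ from Lemma \ref{lemmaUT}, and the Kurtz--Protter Theorems 7.2 and 7.4 applied to the jointly converging pair. The additional bookkeeping you supply --- the explicit c\`adl\`ag integrand $H^{(n)}$, the pathwise identity expressing the sum as $\int_0^t H^{(n)}(s-)\,dX^{(n)}(s)$, the promotion of marginal to joint convergence via independence of $\{Y_i\}$ and $\{J_i\}$, and the filtration enlargement ensuring adaptedness --- merely makes explicit steps that the paper leaves implicit, and is welcome detail rather than a departure.
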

\begin{proof}
Taking into account that $f$ is a continuous function and applying the Continuous Mapping Theorem discussed in \cite{whitt02} we have that 
\[\left\{f\left(\frac{T_i}{n}\right)\right\}_{t\geqslant 0}\overset{J_1-top}{ \Rightarrow} \left\{f(D_{\beta}(t))\right\}_{t\geqslant 0},\]
when $n\to+\infty$.

\noindent Applying  Lemma \ref{lemmaUT} we have that $(X^{(n)})_{n\in\N}$ is uniformly tight (UT).  Combining Theorem 7.2 and Theorem 7.4 in \cite{KP96II} and taking into account that
 \[\left\{\left(f\left(\frac{T_i}{n}\right),X^{(n)}(t)\right)\right\}_{t\geqslant 0}\overset{J_1-top}{ \Rightarrow} \left\{(f(D_{\beta}(t)),L_{\alpha}(t))\right\}_{t\geqslant 0},\] as $n\to+\infty$, 
we conclude that $(X^{(n)})_{n\in\N}$ is a \textsl{good sequence}, as discussed in Lemma \ref{lemmaUT}. As we have seen in Definition \ref{good}, this means that 
\begin{equation*}
\left\{\left(f\left(\frac{T_i}{n}\right),X^{(n)}(t),
\sum_{i=1}^{\lfloor n^{\beta}t\rfloor}f\left(\frac{T_i}{n}\right)\frac{Y_i}{n^{\beta/\alpha}}\right)\right\}_{t\geqslant 0} \overset{J_1-top}{\Longrightarrow} \left\{\left(f(D_{\beta}(t)),
L_{\alpha}(t) ,\int_0^t f(D_{\beta}(s))dL_{\alpha}(s)\right)\right\}_{t\geqslant 0},
\end{equation*}
as $n\rightarrow \infty$.

\noindent In particular, we have that 
\[\left\{\sum_{i=1}^{\lfloor n^{\beta}t\rfloor}f\left(\frac{T_i}{n}\right)\frac{Y_i}{n^{\beta/\alpha}}\right\}_{t\geqslant 0}\overset{J_1-top}{\Rightarrow}\left\{\int_0^t f(D_{\beta}(s))dL_{\alpha}(s)\right\}_{t\geqslant 0},\]
when $n\to +\infty$.
\end{proof}

\begin{remark}
We have considered the following sum with a deterministic number of summands \[\sum_{i=1}^{\lfloor n^{\beta}t\rfloor}\frac{Y_i}{n^{\beta/\alpha}}\] because taking into account that $\frac{\lfloor n^{\beta}\rfloor}{n^{\beta}}\to 1$, when $n\to \infty$ we can replace  $n^{\beta}$  by $\lfloor n^{\beta}\rfloor$.
The reason to use the sum given by $\sum_{i=1}^{ n^{\beta}}\frac{Y_i}{n^{\beta/\alpha}}$
comes from the fact that the characteristic function of  $\sum_{i=1}^{n^{\beta}}\frac{Y_i}{n^{\beta/\alpha}}$ is
\[\E\left[e^{ik\left(\sum_{i=1}^{n^{\beta}}\frac{Y_i}{n^{\beta/\alpha}}\right)}\right]=(e^{-|k|^{\alpha}/n^{\beta}})^{n^{\beta}}=e^{-|k|^{\alpha}}.\]
\end{remark}

\begin{proposition}\label{propstocsumconv}
Let $f\in \mathcal{C}_b(\R)$ be a continuous bounded function on $\R$. Under the distributional assumptions and the scaling considered above, the integral $\left\{\sum_{i=1}^{N_{\beta}(nt)}f\left(\frac{T_i}{n}\right)Y_i\right\}_{t\geqslant 0}$ converges in law to $\left\{\int_0^{D_{\beta}^{-1}(t)}f(D_{\beta}(s))dL_{\alpha}(s)\right\}_{t\geqslant 0}$ in the Skorokhod topology as $n\to +\infty$, where 
\[\int_0^{D_{\beta}^{-1}(t)}f(D_{\beta}(s))dL_{\alpha}(s)\overset{a.s.}{=}\int_0^{t}f(s)dL_{\alpha}(D^{-1}_{\beta}(s)).\]
\end{proposition}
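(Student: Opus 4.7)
The plan is to realize $I_n(t) := \sum_{i=1}^{N_\beta(nt)} f(T_i/n)\, Y_i/n^{\beta/\alpha}$ as a composition of two convergent sequences and pass to the limit via a composition-continuity argument. Introduce
\[
W_n(t) := \sum_{i=1}^{\lfloor n^\beta t\rfloor} f\!\left(\tfrac{T_i}{n}\right) \frac{Y_i}{n^{\beta/\alpha}},
\qquad \tau_n(t) := \frac{N_\beta(nt)}{n^\beta}.
\]
Since $N_\beta(nt)$ is integer-valued, $\lfloor n^\beta \tau_n(t)\rfloor = N_\beta(nt)$, giving the pointwise identity $W_n(\tau_n(t)) = I_n(t)$. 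By Proposition \ref{propconvlaw1}, $W_n \Rightarrow W$ in $J_1$ with $W(t) := \int_0^t f(D_\beta(s))\,dL_\alpha(s)$, and by Corollary \ref{cor37} (combined with the natural rescaling between a renewal process and the inverse of its limiting subordinator) $\tau_n \Rightarrow D^{-1}_\beta$ in $J_1$; moreover $D^{-1}_\beta$ has a.s.\ continuous paths, so the latter convergence is locally uniform.

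First, I would upgrade these marginal convergences to joint convergence $(W_n, \tau_n) \Rightarrow (W, D^{-1}_\beta)$ in the product Skorokhod space, using tightness of each coordinate together with the a.s.\ continuity of the limit $D^{-1}_\beta$. Next, I would invoke Whitt's composition-continuity result (Theorem 13.2.4 of \cite{whitt02}) in the $M_1$-topology, whose hypotheses are satisfied because $D^{-1}_\beta$ is non-decreasing and continuous. The continuous mapping theorem then yields
\[
I_n(t) = W_n(\tau_n(t)) \;\Rightarrow\; W(D^{-1}_\beta(t)) = \int_0^{D^{-1}_\beta(t)} f(D_\beta(s))\, dL_\alpha(s)
\]
in the Skorokhod $M_1$-topology.

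Finally, the almost sure identity
\[
\int_0^{D^{-1}_\beta(t)} f(D_\beta(s))\, dL_\alpha(s) = \int_0^t f(u)\, dL_\alpha(D^{-1}_\beta(u))
\]
is a stochastic change of variables under $u = D_\beta(s)$, justified by the time-change formula for semimartingale stochastic integrals (Kobayashi \cite{kobayashi2010}) applied to the finite $\mathcal{F}_t$-time change $D^{-1}_\beta$. The main obstacle is the composition step: verifying that the jumps of $W$ (inherited from $L_\alpha$) do not fall on plateaux of $D^{-1}_\beta$, which correspond to the jump times of $D_\beta$, so that the hypotheses of Whitt's theorem are met almost surely. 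This follows from the independence of $L_\alpha$ and $D_\beta$ together with the a.s.\ mutually singular character of their jump supports, making the coincidence event null.
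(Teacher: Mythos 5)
Your proposal follows essentially the same route as the paper: both realize the randomly-indexed sum as the composition of the deterministically-indexed sum (convergent by Proposition \ref{propconvlaw1}) with the rescaled counting process, apply Whitt's composition-continuity result (Theorem 13.2.4 of \cite{whitt02}) via the continuous mapping theorem, and conclude with Kobayashi's time-change identity for the almost-sure equality of the two integrals. Your version is in fact somewhat more careful than the paper's, since you explicitly justify the joint convergence of the pair and the verification that the jumps of the limit integrand avoid the plateaux of $D^{-1}_{\beta}$, steps the paper asserts without comment.
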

\begin{proof}
As a consequence of Proposition \ref{propconvlaw1} we know that 
\[\left\{\left(\sum_{i=1}^{\lfloor n^{\beta}t \rfloor}f\left(\frac{T_i}{n}\right)\frac{Y_i}{n^{\beta/\alpha}}, 
N_{\beta}^{(n)}(t)\right)\right\}_{t\geqslant 0}\overset{J_1-top}{\Rightarrow}\left\{\left(\int_0^t f(D_{\beta}(s))dL_{\alpha}(s),D^{-1}_{\beta}(t)\right)\right\}_{t\geqslant 0},\] 
in the Skorokhod space $D([0,\infty),\R)$ when $n\to +\infty$.

\noindent Given $x\in D([0,\infty),\R)$, and $y \in D([0,\infty),\R_+)$ we consider the following map 
 \begin{align*}
\Phi_1:D([0,\infty),\R)\times D([0,\infty),\R_+)&\rightarrow D([0,\infty),\R) \\
(x,y)\qquad \qquad \qquad & \rightarrow \qquad x\circ y,
\end{align*}which is continuous by Theorem 13.2.4 of \cite{whitt02}.
Now, by applying the Continuous Mapping Theorem proved in \cite{whitt02} to the previous terms $\sum_{i=1}^{\lfloor n^{\beta}t \rfloor}f\left(\frac{T_i}{n}\right)\frac{Y_i}{n^{\beta/\alpha}}$ and $D^{-1}_{\beta}(t)$, we obtain the desired convergence in law, that is, 
\[\left\{\sum_{i=1}^{N_{\beta}(nt)}f\left(\frac{T_i}{n}\right)Y_i\right\}_{t\geqslant 0}\overset {J_1-top}{\Rightarrow}\left\{\int_0^{D_{\beta}^{-1}(t)}f(D_{\beta}(s))dL_{\alpha}(s)\right\}_{t\geqslant 0}.\]

\noindent Finally, following Theorem 3.1. in \cite{kobayashi2010}, we have the following equality with probability one:
\[\int_0^{D_{\beta}^{-1}(t)}f(D_{\beta}(s))dL_{\alpha}(s)\overset{a.s.}{=}\int_0^{t}f(s)dL_{\alpha}(D^{-1}_{\beta}(s)).\]
\end{proof}

\begin{theorem}\label{thm:MainTHM}
Let $f\in \mathcal{C}_b(\R)$ be a continuous bounded function on $\R$. Under the distributional assumptions and the scaling considered above, it follows that
\[\left\{\sum_{i=1}^{N_{\beta}(nt)}f\left(\frac{T_i}{n}\right)\frac{Y_i}{n^{\beta/\alpha}}\right\}_{t\geqslant 0}\overset{M_1-top}{\Rightarrow}\left\{\int_0^{t}f(s)dL_{\alpha}(D^{-1}_{\beta}(s))\right\}_{t\geqslant 0},\]
when $n\to+\infty$, in the Skorokhod space $D([0,+\infty),\R)$ endowed with the $M_1$-topology.
\end{theorem}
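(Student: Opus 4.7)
The plan is to apply the $M_1$-convergence criterion of Theorem \ref{thm: characM1} to the integral process $I_n(t)=\sum_{i=1}^{N_{\beta}(nt)} f(T_i/n)\,Y_i/n^{\beta/\alpha}$ with candidate limit $I(t)=\int_0^{t} f(s)\,dL_{\alpha}(D^{-1}_{\beta}(s))$. Two items must be checked: (i) convergence of the finite-dimensional distributions of $I_n$ to those of $I$ on a dense subset of $[0,+\infty)$ containing $0$, and (ii) the $M_1$-compactness condition $\lim_{\delta\to 0}\limsup_{n\to\infty} w(I_n,\delta)=0$.

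For (i), I would invoke Proposition \ref{propstocsumconv}, which yields the Skorokhod convergence of $I_n$ to $\int_0^{D^{-1}_{\beta}(t)} f(D_{\beta}(s))\,dL_{\alpha}(s)$, and the a.s.\ identity $\int_0^{D^{-1}_{\beta}(t)} f(D_{\beta}(s))\,dL_{\alpha}(s) = \int_0^{t} f(s)\,dL_{\alpha}(D^{-1}_{\beta}(s))$ (Theorem 3.1 of Kobayashi, already cited in Proposition \ref{propstocsumconv}) identifies the limit as $I$. Convergence in the Skorokhod space in particular implies convergence of the finite-dimensional distributions at points outside the at most countable discontinuity set of the limit, whose complement is almost surely dense in $[0,+\infty)$ and may be taken to contain $0$, thereby supplying hypothesis (i) of Theorem \ref{thm: characM1}.

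For (ii), the plan is to apply Lemma \ref{Lemmamodcont}, which reduces the modulus condition for $I_n$ to the analogous condition for the unweighted companion $X_n(t)=\sum_{i=1}^{N_{\beta}(nt)} Y_i/n^{\beta/\alpha}$, using only the boundedness of $f$. The modulus condition (\ref{condmodXn}) for $X_n$ is in turn a free byproduct of the Meerschaert--Scheffler functional limit theorem (\ref{limitcomp}): since $X_n$ converges in the $M_1$-topology to $L_{\alpha}(D^{-1}_{\beta}(t))$, the sequence $(X_n)$ is $M_1$-tight, and the modulus condition is precisely the standard $M_1$-tightness criterion on $D([0,+\infty),\R)$ used in the converse direction of Theorem \ref{thm: characM1}.

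Combining (i) and (ii) through Theorem \ref{thm: characM1} yields the asserted $M_1$-convergence $I_n\Rightarrow I$. The main conceptual hurdle is not the final assembly but the upstream Lemma \ref{lemmaUT}, which feeds into Proposition \ref{propstocsumconv} and hence into step (i): establishing uniform tightness of $X^{(n)}$ through a first-moment bound on the jumps $\Delta X^{(n)}$ genuinely requires $\alpha\in(1,2]$ so that $\E|Y_1|<+\infty$, which is precisely why this version of the theorem is restricted to the finite-mean range and why Section 5 must resort to an alternative route for $\alpha\in(0,1]$.
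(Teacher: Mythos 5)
Your proposal is correct and follows essentially the same route as the paper: verify the two hypotheses of Theorem \ref{thm: characM1}, obtaining the modulus condition from Lemma \ref{Lemmamodcont} and the finite-dimensional convergence from the Skorokhod convergence in Proposition \ref{propstocsumconv} together with Kobayashi's a.s.\ identity. You are in fact slightly more careful than the paper, which invokes Lemma \ref{Lemmamodcont} without explicitly checking its hypothesis \eqref{condmodXn}; your observation that this follows from the $M_1$-convergence of $X_n$ in the Meerschaert--Scheffler theorem \eqref{limitcomp} closes that small gap.
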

\begin{proof}
To prove this result we only need to check that the hypothesis of Theorem \ref{thm: characM1} are fullfilled. By applying Lemma \ref{Lemmamodcont} the first hypothesis is satisfied. Then, using Proposition \ref{propstocsumconv} we obtain the convergence in the $J_1$-topology and in particular, we get the second one. Thus, the convergence of 
\[\left\{\sum_{i=1}^{N_{\beta}(nt)}f\left(\frac{T_i}{n}\right)\frac{Y_i}{n^{\beta/\alpha}}\right\}_{t\geqslant 0}\overset{M_1-top}{\Rightarrow}\left\{\int_0^{t}f(s)dL_{\alpha}(D^{-1}_{\beta}(s))\right\}_{t\geqslant 0},\]
in the Skorohod space endowed with the $M_1$-topology is proved.
\end{proof}

\subsection{Generalization of the main theorem (case $\alpha\in(0,2]$)}
In this subsection we will extend the Functional Central Limit Theorem (FCLT) proved above to the case where $(Y_i)_{i\in\N}$ is a sequence of random variables in the domain of attraction of an $\alpha$-stable process with index $\alpha\in(0,2]$.  In particular, we are interested in the weak convergence in the Skorokhod $M_1$-topology of 
\begin{equation}
\left\{\sum_{i=1}^{N_{\beta}(nt)}f\left(\frac{T_i}{n}\right)\frac{Y_i}{n^{\beta/\alpha}}\right\}_{t\geqslant 0}\overset{M_1-top}{\Rightarrow}\left\{\int_0^tf(s)dL_{\alpha}(D_{\beta}^{-1}(s))\right\}_{t\geqslant 0},\; n\to\infty.
\end{equation}

The methodology used in the previous subsection to prove the FCLT for the case with index $\alpha\in (1,2]$ was based on the condition (\ref{condUT1}) of  Lemma \ref{UT1}. This condition is not useful for the case of jumps with $\alpha$-stable distribution with index $\alpha\in (0,1]$ because they have infinite moments.

To solve this problem, we will use another definition of uniformly tightness and a criterion given by Kurtz and Protter in \cite{KP91} which will be more convenient to use this result for our case.

\begin{definition}\label{defUT}

A sequence of $\F_t^n$-semimartingales $(X^n)_{n \in \N}$ is uniformly tight if for each $t > 0$, the set
$\{ \int_0^t H^n(s-) dX_s^n, \, H^n \in \mathbf{S}^n, \, |H^n| \leqslant 1, \, n \geqslant 1 \}$ is stochastically bounded, uniformly in $n$, where $(H^n)_{n\in \N}$ denotes a sequence of $\M^{km}$-valued  c\`{a}dl\`{a}g processes and $\F_t^n$-adapted and $\mathbf{S}^n$ denotes the class of simple predictable processes with the respect to the sequence of filtered probability spaces.
\end{definition}

This definition gives a theoretically criterion but it is not easy to verify in practice. In \cite{KP91} we find the following standard procedure in the theory of stochastic integration which is also a Skorokhod continuous procedure and consists on shrinking the large jumps to be no larger than a specified $\delta>0$.

We define $h_{\delta}:\R_+\rightarrow \R_{+}$ by $h_{\delta}(r):=(1-\delta/r)^+$, and $K_{\delta}:D(\R^d)\rightarrow D(\R^d)$ by 
\begin{equation*}
K_{\delta}(x)t:=\sum_{0<s\leqslant t}h_{\delta}(|\Delta x_s|)\Delta x_s.
\end{equation*}

For a semimartingale $X$, set $X^{\delta}:=X-K_{\delta}(X)$, and for a sequence $X^{(n)}$: $X^{(n),\delta}:=X^{(n)}-K_{\delta}(X^{(n)})$. Then the semimartingale $X^{\delta}$ has all of its jumps bounded by $\delta$. It is well-known that a semimartingale with bounded jumps has many good properties.

Considering the sequence of semimartingales $X^{(n),\delta}$ and $\delta>0$, for each $n$, $X^{(n),\delta}$ has a decomposition 
\begin{equation}
X^{(n),\delta}=M^{n,\delta}+A^{n,\delta},
\end{equation}
where $M^{n,\delta}$ is a local martingale and $A^{n,\delta}$ has paths of finite variation on compacts, and $M^{n,\delta}$ and $A^{n,\delta}$  both have bounded jumps.

The quadratic variation $[M^{n,\delta}, M^{n,\delta}]$ and $A^{n,\delta}$ are locally bounded and the total variation process of $A^{n,\delta}$, denoted by $TV(A^{n,\delta})$ is also locally bounded. That is there exist stopping times $\{\tau_{n}^{k}\}_{k\in\N}$ increasing to infinity a.s. in $k$  such that $[M^{n,\delta}, M^{n,\delta}]$ and $TV(A^{n,\delta})$  are bounded a.s.

The following result by Kurtz and Protter is similar to that proven by A. Jakubowski, J. M\'{e}min and G. Pag\`{e}s in \cite{JMP89} but the method of the proof is entirely different. In our case it is more convenient to use this result.

\begin{theorem}[Theorem 2.2, \cite{KP91}]\label{th22KP}
For each $n$, let $(H^n,X^n)$ be an $\F_t^n$-adapted process with sample paths in $D_{\M^{km}\times\R^m}([0,\infty))$ and let $X^n$  be an $\F_t^n$-semimartingale. Fix $\delta >0$ and define $X^{n,\delta}=X^n-K_{\delta}(X^n)$. (Note that $X^{n,\delta}$ will also be a semimartingale.) Let $X^{n,\delta}=M^{n,\delta}+A^{n,\delta}$ be a decomposition of $X^{n,\delta}$ into an $\F_t^n$-local martingale and a process with finite variation. Suppose 
\begin{itemize}
\item[(C1)] for each $\theta>0$, there exist stopping times $\tau_n^{\theta}$ such that $\mathbb{P}(\tau_n^{{\theta}}\leqslant \theta)\leqslant  \frac{1}{\theta}$ and furthermore
\begin{equation}\label{cond22iKP}
\sup_n\E\left[[M^{n,\delta},M^{n,\delta}]_{t\wedge\tau_n^{\theta}}+TV(A^{n,\delta},{t\wedge\tau_n^{\theta}})\right]<+\infty.
\end{equation}
\end{itemize} 
If $(H^n,X^n)\Rightarrow (H,X)$ in the Skorokhod topology on  $D_{\M^{km}\times\R^m}([0,\infty))$, then $X$ is a semimartingale with respect to a filtration to which $H$ and $X$ are adapted and $$\left(H^n,X^n,\int H^ndX^n\right)\Rightarrow \left(H,X,\int H dX\right),$$ in the Skorokhod topology on $D_{\M^{km}\times\R^m\times \R^k}([0,\infty))$.
\end{theorem}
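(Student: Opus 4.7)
The plan is to split $X^n$ into its ``big-jumps'' part $K_\delta(X^n)$ and its complement $X^{n,\delta}$, to localize $X^{n,\delta}$ via the stopping times $\tau_n^\theta$ so that the components in the decomposition $X^{n,\delta}=M^{n,\delta}+A^{n,\delta}$ become bounded, and then to approximate $H$ by simple predictable processes in order to identify the limit integral. Concretely, I would first write
\[
\int_0^\cdot H^n(s-)\, dX^n_s \;=\; \int_0^\cdot H^n(s-)\, dX^{n,\delta}_s \;+\; \sum_{0<s\leqslant \cdot} H^n(s-)\, h_\delta(|\Delta X^n_s|)\,\Delta X^n_s,
\]
and observe that only jumps of size larger than $\delta$ contribute to the second summand, so this summand depends continuously on $(H^n,X^n)$ in the Skorokhod topology. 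Its convergence therefore follows from the Continuous Mapping Theorem applied to the joint convergence $(H^n,X^n)\Rightarrow (H,X)$.

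For the first summand I would use condition (C1) crucially. Once stopped at $\tau_n^\theta$, the hypothesis provides a uniform-in-$n$ $L^1$ bound on $[M^{n,\delta},M^{n,\delta}]_{t\wedge\tau_n^\theta}$ and on $TV(A^{n,\delta},t\wedge\tau_n^\theta)$. Doob's $L^2$ inequality applied to the martingale $\int H^n\, dM^{n,\delta}$, whose quadratic variation is dominated by $\|H^n\|_\infty^2\,[M^{n,\delta},M^{n,\delta}]$, then yields tightness of the martingale part of the integral, while the pathwise inequality $|\int_0^t H^n\, dA^{n,\delta}|\leqslant \|H^n\|_\infty\,TV(A^{n,\delta})_t$ handles the finite-variation part. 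The requirement $\mathbb{P}(\tau_n^\theta\leqslant \theta)\leqslant 1/\theta$ is tailor-made to let me first let $n\to\infty$ and then $\theta\to\infty$, thereby removing the localization and obtaining tightness of $\int_0^\cdot H^n(s-)\, dX^{n,\delta}_s$ on every compact interval.

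The main obstacle, and the only genuinely nontrivial step, is identifying the weak limit of $\int H^n\, dX^n$ with a true stochastic integral $\int H\, dX$ and exhibiting a filtration in which $X$ is a semimartingale and $(H,X)$ is adapted. I would invoke the Skorokhod representation theorem to pass to almost-sure $J_1$-convergence on a common probability space, and then approximate $H$ by simple predictable processes $H^{(\varepsilon)}$ with respect to the right-continuous completion $\mathcal{G}_t$ of the filtration generated by $(H,X)$. For such simple $H^{(\varepsilon)}$ the stochastic integral against $X^n$ reduces to a finite Riemann--Stieltjes-type sum of increments, which converges to the analogous sum against $X$ by another Continuous Mapping argument in the $J_1$-topology. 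The uniform estimates provided by (C1) allow one to control $\int(H^n-H^{(\varepsilon),n})\, dX^n$ in probability uniformly in $n$, so that letting $\varepsilon\downarrow 0$ identifies the limit as $\int H\, dX$. Passing the decomposition $X^{n,\delta}=M^{n,\delta}+A^{n,\delta}$ to the limit, the bounds from (C1) together with standard lower-semicontinuity of quadratic variation yield a decomposition $X^\delta=M^\delta+A^\delta$ in the filtration $\mathcal{G}_t$; adding back the finite-variation pure-jump part $K_\delta(X)$ exhibits $X$ as a $\mathcal{G}_t$-semimartingale, completing the proof.
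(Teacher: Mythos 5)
The paper does not actually prove this statement: it is quoted verbatim as Theorem 2.2 of Kurtz and Protter \cite{KP91} and used as a black box, so there is no in-paper proof to compare against. Your sketch does follow the strategy of the original Kurtz--Protter argument: split off the jumps of magnitude larger than $\delta$ and treat them by continuous mapping, localize the small-jump part with the stopping times of (C1) to obtain uniform-in-$n$ $L^1$ control of $[M^{n,\delta},M^{n,\delta}]$ and of $TV(A^{n,\delta})$, and identify the limit by discretizing the integrand. Three points would need care in a full write-up. First, $\|H^n\|_\infty$ is not a finite constant: $H^n$ is merely a c\`adl\`ag adapted process, so before invoking Doob's inequality and the total-variation bound you must use the stochastic boundedness of $\sup_{s\leqslant t}|H^n(s)|$ (which follows from tightness of $\{H^n\}$) together with a truncation. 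Second, the map $(h,x)\mapsto\sum_{0<s\leqslant\cdot}h(s-)\,h_\delta(|\Delta x_s|)\,\Delta x_s$ is \emph{not} everywhere continuous in the Skorokhod topology; it is continuous only at paths $x$ having no jump of magnitude exactly $\delta$, and since at most countably many values of $\delta$ are bad for the limit $X$, one must choose $\delta$ outside that exceptional set --- Kurtz and Protter make this choice explicitly. Third, your claim that (C1) controls $\int(H^n-H^{(\varepsilon),n})\,dX^{n,\delta}$ uniformly in $n$ requires the discretization to satisfy $|H^n-H^{(\varepsilon),n}|\leqslant\varepsilon$ pathwise and uniformly; a fixed time grid does not guarantee this, which is why \cite{KP91} use the first-exit-time discretization $\tau_{k+1}=\inf\{t>\tau_k:|H(t)-H(\tau_k)|\geqslant\varepsilon\}$. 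With these repairs your outline reproduces the published proof.
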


In the next lemma, we will verify the condition (C1) of Theorem \ref{th22KP} for the stochastic process given by 
$$X^{(n)}(t):=\sum_{i=1}^{\lfloor n^{\beta} t\rfloor} \frac{Y_i}{n^{\beta/\alpha}},$$
where $(Y_i)_{i\in \N}$ are  i.i.d.  symmetric $\alpha$-stable random variables, with $\alpha\in (0,2]$.
\begin{lemma}\label{lemmaUT2}
Assume that $\{Y_i\}_{i\in \N}$ be i.i.d.  symmetric $\alpha$-stable random variables, with $\alpha\in (0,2]$. Let 
\begin{equation}\label{Xnn2}
X^{(n)}(t):=\sum_{i=1}^{\lfloor n^{\beta} t\rfloor} \frac{Y_i}{n^{\beta/\alpha}}
\end{equation} 
be defined on the probability space $(\Omega^n, \F^n,\mathbb{P}^n)$. Fix $\delta>0$ and define $X^{(n),\delta}=X^{(n)}-K_{\delta}(X^{(n)})$. Let $X^{(n),\delta}=M^{n,\delta}+A^{n,\delta}$ be a decomposition of $X^{(n),\delta}$ into a $\F^n_t$-martingale and a process with finite variation. 
Then $X^{(n),\delta}$ satisfies the condition (C1) of Theorem \ref{th22KP}.
\end{lemma}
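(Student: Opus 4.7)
The plan is to exploit the fact that after the shrinking operation $K_\delta$, every jump of $X^{(n),\delta}$ has modulus at most $\delta$, and to combine this with the symmetry of the $Y_i$ to obtain a trivial semimartingale decomposition with $A^{n,\delta}\equiv 0$. One can then take the stopping times to be identically $+\infty$, so that $\mathbb{P}^n(\tau_n^\theta\le\theta)=0\le 1/\theta$ is automatic, and verify (C1) by a direct second-moment estimate on the jumps.

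The first step is to identify the jumps of $X^{(n),\delta}$. From the definition of $h_\delta$, the $i$-th jump occurs at time $i/n^\beta$ and equals $Y_i/n^{\beta/\alpha}$ when $|Y_i|\le\delta n^{\beta/\alpha}$ and $\delta\,\mathrm{sign}(Y_i)$ otherwise; in either case it is bounded by $\delta$ in absolute value and, because $Y_i$ is symmetric and the truncation map is odd, the jump is itself symmetric. Boundedness yields integrability and symmetry yields zero mean, so these truncated increments $\widetilde{Z}^n_i$ are i.i.d.\ centered bounded variables. Consequently $X^{(n),\delta}(t)=\sum_{i=1}^{\lfloor n^\beta t\rfloor}\widetilde{Z}^n_i$ is already an $\F^n_t$-martingale, and we may take $M^{n,\delta}=X^{(n),\delta}$ and $A^{n,\delta}\equiv 0$; in particular $TV(A^{n,\delta})\equiv 0$.

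The second step is to bound the quadratic variation. Since $X^{(n),\delta}$ is pure jump, $[M^{n,\delta},M^{n,\delta}]_{t}=\sum_{i=1}^{\lfloor n^\beta t\rfloor}(\widetilde{Z}^n_i)^2$, so the problem reduces to estimating $\E^n[(\widetilde{Z}^n_1)^2]$. Splitting according to whether $|Y_1|$ exceeds $\delta n^{\beta/\alpha}$ gives
\[
\E^n[(\widetilde{Z}^n_1)^2]=n^{-2\beta/\alpha}\E^n\!\left[Y_1^2\mathbf{1}_{\{|Y_1|\le\delta n^{\beta/\alpha}\}}\right]+\delta^2\mathbb{P}^n(|Y_1|>\delta n^{\beta/\alpha}).
\]
Using the stable tail asymptotic $\mathbb{P}^n(|Y_1|>x)\sim C_\alpha x^{-\alpha}$ for $\alpha\in(0,2)$, a standard Fubini computation gives $\E^n[Y_1^2\mathbf{1}_{\{|Y_1|\le R\}}]\asymp R^{2-\alpha}$ as $R\to\infty$; substituting $R=\delta n^{\beta/\alpha}$ yields $\E^n[(\widetilde{Z}^n_1)^2]\le K(\alpha)\delta^{2-\alpha}/n^\beta$ for $n$ large, and the case $\alpha=2$ is even easier since $Y_1$ has finite variance. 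Summing over the $\lfloor n^\beta t\rfloor$ independent jumps yields $\sup_n\E^n[[M^{n,\delta},M^{n,\delta}]_t]\le K(\alpha)\,t\,\delta^{2-\alpha}<\infty$.

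Combined with $TV(A^{n,\delta})\equiv 0$, this is exactly condition (C1) with $\tau_n^\theta=+\infty$, and the lemma follows. The only delicate point is the range $\alpha\le 1$, where $Y_1$ is not integrable and $X^{(n)}$ is itself not a martingale; it is precisely the combination of symmetry with the deterministic boundedness-by-$\delta$ produced by $K_\delta$ that rescues a clean centering argument for $X^{(n),\delta}$. Beyond this observation the whole argument reduces to a routine tail estimate on symmetric $\alpha$-stable laws.
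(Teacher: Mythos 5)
Your argument is correct, but it uses the opposite semimartingale decomposition from the one in the paper, and the comparison is instructive. You place all of $X^{(n),\delta}$ in the martingale part: since the shrinking map $y\mapsto y\mathbf{1}_{\{|y|<\delta\}}+\delta\,\mathrm{sgn}(y)\mathbf{1}_{\{|y|\geqslant\delta\}}$ is odd and bounded and the $Y_i$ are symmetric, the truncated increments $\widetilde{Z}^n_i$ are i.i.d., bounded by $\delta$ and centered, so you may take $M^{n,\delta}=X^{(n),\delta}$, $A^{n,\delta}\equiv 0$ and $\tau_n^{\theta}\equiv+\infty$, and (C1) reduces to the uniform bound $\E^n\bigl[[M^{n,\delta},M^{n,\delta}]_t\bigr]=\lfloor n^{\beta}t\rfloor\,\E^n[(\widetilde{Z}^n_1)^2]\leqslant K(\alpha)\,t\,\delta^{2-\alpha}$, which follows from the tail estimate $\E[Y_1^2\mathbf{1}_{\{|Y_1|\leqslant R\}}]\asymp R^{2-\alpha}$ (plus a trivial finite maximum over the finitely many small $n$ for which the asymptotics have not yet kicked in). The paper instead sets $M^{n,\delta}=0$ and $A^{n,\delta}=X^{(n),\delta}$, notes that the truncated process has finite total variation pathwise, and localizes with $\tau_n^{d}=\inf\{t:TV(X^{(n),\delta},t)\geqslant d\}$ so that the stopped total variation is at most $d_{\theta}+2\delta$. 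Your route buys uniformity in $n$ over the whole range $\alpha\in(0,2]$: the paper's argument needs an $n$-independent level $d_{\theta}$ with $\mathbb{P}^n(TV(X^{(n),\delta},\theta)\geqslant d_{\theta})\leqslant 1/\theta$, i.e.\ tightness in $n$ of the truncated total variations, and the same first-moment computation you use shows $\E^n[TV(X^{(n),\delta},t)]\asymp t\,\delta^{1-\alpha}$ uniformly in $n$ only when $\alpha<1$; for $\alpha\geqslant 1$ the truncated total variation grows like $n^{\beta(1-1/\alpha)}$ (logarithmically at $\alpha=1$) and is not stochastically bounded in $n$. So your quadratic-variation bound is the one that actually delivers the lemma in the stated generality $\alpha\in(0,2]$ in a single argument; the price is that it genuinely uses the symmetry hypothesis on the $Y_i$, which is exactly what makes the centering legitimate when $\alpha\leqslant 1$.
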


\begin{proof}
This proof is inspired by the results obtained in Example 4.1 of \cite{Janicki96}.

Fix $\delta>0$, define $X^{(n),\delta}:=X^{(n)}-K_{\delta}(X^{(n)})$, where
\begin{equation*}
K_{\delta}(x):=\sum_{0<s\leqslant t}\left(1-\frac{\delta}{|x(s)-x(s-)|}\right)^+(x(s)-x(s-)).
\end{equation*}

Then the process $X^{(n),\delta}$ can be written as 
\begin{equation}
X^{(n),\delta}(t)=\sum_{k=1}^{\lfloor n^{\beta}t\rfloor}\left[\left(\frac{Y_k}{n^{\beta/\alpha}}\right)\mathbf{1}_{(-\delta,\delta)}\left(\frac{Y_k}{n^{\beta/\alpha}}\right)+\delta\,\text{sgn} \left(\frac{Y_k}{n^{\beta/\alpha}}\right)\mathbf{1}_{(-\infty,-\delta]\cup[\delta,\infty)}\left(\frac{Y_k}{n^{\beta/\alpha}}\right)\right].
\end{equation}

The total variation of the process $X^{(n),\delta}$ on the interval $[0,t]$ is defined with the following formula
\begin{equation}\label{TVproc}
TV(X^{(n),\delta},t):=\sup_{n}\sup_{0\leqslant t_0<t_1<\dots<t_n\leqslant t}\sum_{i=1}^n|X^{(n),\delta}_{t_{i}}-X^{(n),\delta}_{t_{i-1}}|
\end{equation}

We notice that in our case the total variation of the process
\begin{equation}\label{TVXndelta}
TV(X^{(n),\delta},t)=\sum_{k=1}^{\lfloor n^{\beta}t\rfloor}\left[\left|\frac{Y_k}{n^{\beta/\alpha}}\right|\mathbf{1}_{(-\delta,\delta)}\left(\frac{Y_k}{n^{\beta/\alpha}}\right)+\delta\mathbf{1}_{(-\infty,-\delta]\cup[\delta,\infty)}\left(\frac{Y_k}{n^{\beta/\alpha}}\right)\right],
\end{equation}
is finite. 

This means that we can take $A^{n,\delta}=X^{(n),\delta}$ and martingale part $M^{n,\delta}=0$.

Now, we define a stopping time $\tau_n^{d}=\inf\{t>0:TV(X^{(n),\delta},t)\geqslant d\}$.
Since $X^{(n),\delta}$ has finite variation, then $\tau_{n}^d<+\infty$. 

Then for $\theta>0$ there exists $d_{\theta}$ such that 
\begin{equation}
\mathbb{P}(\tau_n^{d_{\theta}}\leqslant \theta)=\mathbb{P}(TV(X^{(n),\delta},t)\geqslant d_{\theta})\leqslant \frac{1}{\theta}.
\end{equation} 

We have that
\begin{equation*}
TV(X^{(n),\delta},t \wedge \tau_n^{d_\theta})\leqslant d_{\theta}+2\delta
\end{equation*}

Now we get that 
\begin{equation*}
\sup_n\E[TV(X^{(n),\delta},t\wedge\tau_n^{d_\theta})]\leqslant d_{\theta}+2\delta<+\infty.
\end{equation*}

Hence, this implies the condition (C1) of Theorem \ref{th22KP}. That is, for each $\theta>0$, we have that 
\begin{align*}
\sup_n\E[[M^{n,\delta}, M^{n,\delta}]_{t\wedge\tau_n^{\theta}}+TV(A^{n,\delta},{t\wedge\tau_n^{\theta}})]=\sup_n\E[TV(X^{(n),\delta},t\wedge\tau_n^{d_\theta})]\leqslant d_{\theta}+2\delta<+\infty.\end{align*}
This concludes the proof.
\end{proof}

Now, we prove an extension of Proposition \ref{propconvlaw1} to the case of jumps in the domain of attraction of an $\alpha$-stable process with index $\alpha\in(0,2]$. 
\begin{proposition}\label{propconvlaw2}
Let $f\in \mathcal{C}_b(\R)$ be a continuous bounded function on $\R$. Let $\{Y_i\}_{i\in\N}$ be random variables in the domain of attraction of an $\alpha$-stable process with index $\alpha\in(0,2]$ and the waiting times $\{J_i\}_{i\in\N}$ be random variables in the domain of attraction of a $\beta$-stable process with index $\beta\in (0,1)$. Let $T_i=\sum_{l=1}^iJ_l$. Under these distributional assumptions and a suitable scaling we have that 
\[\left\{\sum_{i=1}^{\lfloor n^{\beta}t\rfloor}f\left(\frac{T_i}{n}\right)\frac{Y_i}{n^{\beta/\alpha}}\right\}_{t\geqslant 0}\overset{J_1-top}{\Rightarrow}\left\{\int_0^t f(D_{\beta}(s))dL_{\alpha}(s)\right\}_{t\geqslant 0},\]
when $n\to +\infty$.
\end{proposition}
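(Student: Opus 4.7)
I would mimic the strategy of Proposition \ref{propconvlaw1}, with the uniform tightness criterion of Lemma \ref{UT1}/Lemma \ref{lemmaUT} (which forced $\alpha\in(1,2]$ through the finiteness of $\E|Y_1|$) replaced by the truncation-based criterion (C1) of Theorem \ref{th22KP}, already verified for every $\alpha\in(0,2]$ in Lemma \ref{lemmaUT2}. The first preparatory step is to rewrite the partial sum as a genuine stochastic integral against $X^{(n)}$. Define the piecewise-constant, $\F^n_t$-adapted càdlàg process
\[
H^{(n)}(t):=f\!\left(\frac{T_{\lfloor n^{\beta}t\rfloor+1}}{n}\right),
\]
so that $H^{(n)}(i/n^{\beta}-)=f(T_i/n)$ and hence
\[
\int_0^t H^{(n)}(s-)\,dX^{(n)}(s)=\sum_{i=1}^{\lfloor n^{\beta}t\rfloor} f\!\left(\frac{T_i}{n}\right)\frac{Y_i}{n^{\beta/\alpha}}.
\]

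\textbf{Step 1: joint $J_1$-convergence of the integrand/integrator pair.} By \eqref{eqstablsubqv} with $c=n^{\beta}$ (promoted to $J_1$-convergence via Stone's theorem, as in Corollary \ref{cor37} and Corollary \ref{cor:3.2}, using the fact that $D_\beta$ is strictly increasing), $\{n^{-1}T_{\lfloor n^\beta t\rfloor}\}_{t\geqslant 0}\Rightarrow\{D_{\beta}(t)\}_{t\geqslant 0}$ in the $J_1$-topology. Since $f\in\mathcal{C}_b(\R)$, the Continuous Mapping Theorem on $D([0,\infty),\R)$ gives $H^{(n)}\Rightarrow f\circ D_\beta$ in $J_1$. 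By Corollary \ref{cor:3.2}, $X^{(n)}\Rightarrow L_\alpha$ in $J_1$. Because $\{Y_i\}_{i\in\N}$ and $\{J_i\}_{i\in\N}$ are independent, the two sequences of processes are independent, and the limits $f\circ D_\beta$ and $L_\alpha$ have almost surely disjoint discontinuity sets (the jump times of $L_\alpha$ and those of $D_\beta$ are independent totally inaccessible point processes). Consequently marginal $J_1$-convergence upgrades to joint $J_1$-convergence,
\[
(H^{(n)},X^{(n)})\stackrel{J_1-top}{\Rightarrow}(f\circ D_{\beta},L_{\alpha}),
\]
in $D_{\R^2}([0,\infty))$.

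\textbf{Step 2: apply the Kurtz--Protter theorem.} By Lemma \ref{lemmaUT2} the sequence $(X^{(n)})_{n\in\N}$ fulfils condition (C1) of Theorem \ref{th22KP} for every $\delta>0$. Since $H^{(n)}$ is bounded, $\F^n_t$-adapted and càdlàg, Theorem \ref{th22KP} applies to the pair $(H^{(n)},X^{(n)})$ and yields
\[
\Bigl(H^{(n)},X^{(n)},\int_0^\cdot H^{(n)}(s-)\,dX^{(n)}(s)\Bigr)\Rightarrow\Bigl(f\circ D_{\beta},L_{\alpha},\int_0^\cdot f(D_{\beta}(s))\,dL_{\alpha}(s)\Bigr)
\]
in the Skorokhod topology. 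Projection onto the third coordinate is the desired $J_1$-convergence of $\sum_{i=1}^{\lfloor n^\beta t\rfloor}f(T_i/n)Y_i/n^{\beta/\alpha}$ to $\int_0^t f(D_{\beta}(s))\,dL_{\alpha}(s)$.

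\textbf{Anticipated obstacle.} The genuinely delicate point is the joint $J_1$-convergence in Step 1: in the Skorokhod topology marginal convergence does not in general imply joint convergence, and one must rule out coinciding jumps in the limit. Independence of $\{Y_i\}$ and $\{J_i\}$ together with the a.s. disjointness of the discontinuity sets of $L_\alpha$ and $D_\beta$ is the precise input that makes this work; any rigorous write-up will need to cite (or verify) the standard lemma stating that $J_1$-continuity points carry over in this independent setting. The rest of the argument is essentially a bookkeeping exercise on top of Lemma \ref{lemmaUT2} and Theorem \ref{th22KP}.
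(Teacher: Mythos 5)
Your proof follows essentially the same route as the paper's: marginal $J_1$-convergence of the integrand $f(T_\cdot/n)$ and the integrator $X^{(n)}$, upgraded to joint convergence, followed by an application of Theorem \ref{th22KP} with condition (C1) supplied by Lemma \ref{lemmaUT2}. You are in fact more careful than the paper at the one delicate point: the paper simply asserts the joint $J_1$-convergence of the pair, whereas you correctly flag that it rests on the independence of $\{Y_i\}$ and $\{J_i\}$ together with the a.s. disjointness of the discontinuity sets of $L_\alpha$ and $D_\beta$.
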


\begin{proof}
Taking into account that $f$ is a continuous function and applying the Continuous Mapping Theorem discussed in \cite{whitt02} we have that 
\[\left\{f\left(\frac{T_i}{n}\right)\right\}_{t\geqslant 0}\overset{J_1-top}{ \Rightarrow} \left\{f(D_{\beta}(t))\right\}_{t\geqslant 0},\]
when $n\to+\infty$.
\newline Applying  Lemma \ref{lemmaUT2}, the condition (C1) of Theorem \ref{th22KP} is satisfied. Furthermore, since we have the convergence on the $J_1$-topology
 \[\left\{\left(f\left(\frac{T_i}{n}\right),X^{(n)}(t)\right)\right\}_{t\geqslant 0}\overset{J_1-top}{ \Rightarrow}\left\{ (f(D_{\beta}(t)),L_{\alpha}(t))\right\}_{t\geqslant 0},\] as $n\to+\infty$, we are under the assumptions of Theorem \ref{th22KP}. Thus, applying this theorem we conclude that
\begin{equation*}
\left\{\left(f\left(\frac{T_i}{n}\right),X^{(n)}(t),
\sum_{i=1}^{\lfloor n^{\beta}t\rfloor}f\left(\frac{T_i}{n}\right)\frac{Y_i}{n^{\beta/\alpha}}\right)\right\}_{t\geqslant 0} \overset{J_1-top}{\Longrightarrow} \left\{\left(f(D_{\beta}(t)),
L_{\alpha}(t) ,\int_0^t f(D_{\beta}(s))dL_{\alpha}(s)\right)\right\}_{t\geqslant 0},
\end{equation*}
as $n\rightarrow \infty$.
\newline In particular, we deduce that 
\[\left\{\sum_{i=1}^{\lfloor n^{\beta}t\rfloor}f\left(\frac{T_i}{n}\right)\frac{Y_i}{n^{\beta/\alpha}}\right\}_{t\geqslant 0}\overset{J_1-top}{\Rightarrow}\left\{\int_0^t f(D_{\beta}(s))dL_{\alpha}(s)\right\}_{t\geqslant 0},\]
when $n\to +\infty$.
\end{proof}

Next result is the natural extension of Proposition \ref{propstocsumconv} and Theorem \ref{thm:MainTHM} to the case $\alpha\in (0,2]$.
\begin{theorem}\label{propstocsumconvex2}
Let $f\in \mathcal{C}_b(\R)$ be a continuous bounded function on $\R$. Under the distributional assumptions and the scaling considered above, the integral $\left\{\sum_{i=1}^{N_{\beta}(nt)}f\left(\frac{T_i}{n}\right)Y_i\right\}_{t\geqslant 0}$ converges in law to $\left\{\int_0^{D_{\beta}^{-1}(t)}f(D_{\beta}(s))dL_{\alpha}(s)\right\}_{t\geqslant 0}$in the Skorokhod topology as $n\to +\infty$, where 
\[\int_0^{D_{\beta}^{-1}(t)}f(D_{\beta}(s))dL_{\alpha}(s)\overset{a.s.}{=}\int_0^{t}f(s)dL_{\alpha}(D^{-1}_{\beta}(s)).\]
Moreover, we have the convergence in the Skorokhod space $D([0,+\infty),\R)$ endowed with the $M_1$-topology
\begin{equation}
\left\{\sum_{i=1}^{N_{\beta}(nt)}f\left(\frac{T_i}{n}\right)\frac{Y_i}{n^{\beta/\alpha}}\right\}_{t\geqslant 0}\overset{M_1-top}{\Rightarrow}\left\{\int_0^t f(s)dL_{\alpha}(D^{-1}_{\beta}(s))\right\}_{t\geqslant 0},\quad n\to+\infty.
\end{equation}
\end{theorem}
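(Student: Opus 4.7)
The plan is to follow the same three-step template used in the proof of Theorem \ref{thm:MainTHM}, but with each ingredient that relied on $\alpha\in(1,2]$ replaced by its already-established $\alpha\in(0,2]$ counterpart from this subsection. Concretely, condition (C1) verified in Lemma \ref{lemmaUT2} plays the role of the $L^1$-jump bound \eqref{condUT1} used in the finite-mean case, and Proposition \ref{propconvlaw2} plays the role of Proposition \ref{propconvlaw1}. With these substitutions in place, the proof decomposes into three steps: (a) upgrade the deterministic-time $J_1$-convergence of Proposition \ref{propconvlaw2} to a convergence of the subordinated process; (b) identify the limit almost surely via the time-change identity of \cite{kobayashi2010}; and (c) verify the modulus-of-$M_1$-compactness hypothesis of Theorem \ref{thm: characM1} and conclude.

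For step (a), I would first combine Proposition \ref{propconvlaw2} with Corollary \ref{cor37} to obtain the joint $J_1$-convergence
\[
\left(\sum_{i=1}^{\lfloor n^{\beta}t\rfloor}f\!\left(\tfrac{T_i}{n}\right)\tfrac{Y_i}{n^{\beta/\alpha}},\ \tfrac{N_{\beta}(nt)}{n^{\beta}}\right)\ \overset{J_1-top}{\Rightarrow}\ \left(\int_0^{t}f(D_{\beta}(s))\,dL_{\alpha}(s),\ D_{\beta}^{-1}(t)\right),
\]
and then apply the continuous mapping theorem with the composition map $(x,y)\mapsto x\circ y$. Because $D_{\beta}^{-1}$ is continuous and nondecreasing but not strictly increasing, this composition is only continuous from the $J_1$-topology into the $M_1$-topology, which is exactly Theorem 13.2.4 of \cite{whitt02}, the same tool already invoked in Proposition \ref{propstocsumconv}. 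This yields convergence in law of the subordinated sum to $\int_0^{D_{\beta}^{-1}(t)} f(D_{\beta}(s))\,dL_{\alpha}(s)$. Step (b), identifying this limit almost surely with $\int_0^t f(s)\,dL_{\alpha}(D_{\beta}^{-1}(s))$, is a direct invocation of Theorem 3.1 of \cite{kobayashi2010}, applied verbatim as in the proof of Proposition \ref{propstocsumconv}.

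For step (c), I would use Lemma \ref{Lemmamodcont} to reduce the $M_1$-compactness of the integral process $I_n$ to that of $X_n(t)=\sum_{k=1}^{N_{\beta}(nt)} Y_k/n^{\beta/\alpha}$; the latter is automatic from the Meerschaert--Scheffler convergence \eqref{limitcomp} in the Skorokhod $M_1$-topology, which already holds for every $\alpha\in(0,2]$. Combined with the finite-dimensional convergence supplied by step (a), Theorem \ref{thm: characM1} then delivers the claimed $M_1$-convergence. The main obstacle is step (a): once $\alpha\leqslant 1$, the jumps $Y_i$ have infinite first moment, so the simple uniform tightness criterion of Lemma \ref{UT1} is unavailable, and one must instead truncate the large jumps and appeal to the Kurtz--Protter decomposition of Theorem \ref{th22KP}. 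Lemma \ref{lemmaUT2} is precisely the verification that the martingale and finite-variation parts of the truncated process remain controlled uniformly in $n$ along suitable localising stopping times, and it is the one genuinely new ingredient needed beyond the $\alpha\in(1,2]$ proof.
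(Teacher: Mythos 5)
Your proposal is essentially sound, and its steps (a) and (b) coincide with the paper's own argument: the paper likewise passes to the joint $J_1$-convergence of the pair (partial-sum integral, rescaled counting process), composes via the map $\Phi_1(x,y)=x\circ y$ using Theorem 13.2.4 of \cite{whitt02}, and identifies the limit almost surely through Theorem 3.1 of \cite{kobayashi2010}. The two substantive differences are these. First, your final step (c) is not what the paper does: the paper concludes the $M_1$-convergence \emph{directly} from the continuity of the composition map (together with Theorem 3.4.4 of \cite{whitt02}), without ever returning to the Silvestrov characterization (Theorem \ref{thm: characM1}) or to Lemma \ref{Lemmamodcont}. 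Your detour through the modulus of $M_1$-compactness is therefore redundant given your own step (a) — once composition delivers $M_1$-convergence of the subordinated process, there is nothing left to verify — though it is not wrong, and it has the virtue of making the argument uniform with the $\alpha\in(1,2]$ proof of Theorem \ref{thm:MainTHM}. Second, and more delicately, your step (a) obtains the \emph{joint} convergence of the pair by ``combining'' Proposition \ref{propconvlaw2} with Corollary \ref{cor37}; marginal weak convergence of two sequences does not by itself yield weak convergence of the pair. The paper avoids this by producing the second coordinate from the first: it applies the continuous map $\Phi_2(x,y)=(x,y^{-1})$ (Theorem 7.1 of \cite{whitt80}) to a single jointly convergent pair in which the time coordinate is the rescaled partial-sum process $T_{\lfloor n^{\beta}t\rfloor}/n\Rightarrow D_{\beta}$, so that $n^{-1/\beta}N_{\beta}(nt)\Rightarrow D_{\beta}^{-1}(t)$ comes out jointly with the integral process. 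You should either adopt that device or argue the joint convergence from the Kurtz--Protter triple in Proposition \ref{propconvlaw2}; as written, this is the one step of your outline that does not follow from what you cite.
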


\begin{proof}
From Proposition \ref{propconvlaw2} and applying Continuous Mapping Theorem on $D([0,+\infty),\R^d\times \R_+)$ taking $\Phi_2(x,y):=(x,y^{-1})$  as the continuous map (see Theorem 7.1. by \cite{whitt80}) we have that
\begin{equation*}
\left\{\left(\sum_{k=1}^{[n^\beta t]}f\left(\frac{T_k}{n}\right)\frac{Y_k}{n^{\beta/\alpha}}, n^{-1/\beta}N_{\beta}(nt)\right) \right\}_{t\geqslant 0}\overset{J_1-top}{\Rightarrow}\left\{\left(\int_0^tf(D_{\beta}(s))dL_{\alpha}(s), D^{-1}_{\beta}(t)\right) \right\}_{t\geqslant 0},
\end{equation*}
as $n\to\infty$, where $\{D^{-1}_{\beta}(t)\}_{t\geqslant 0}$ is the functional inverse of the subordinator $\{D_{\beta}(t)\}_{t\geqslant 0}$.
\newline We consider the composition map as $\Phi_1(x,y):=x\circ y$ which is continuous by Theorem 13.2.4 of \cite{whitt02}. Then, applying Continuous Mapping Theorem and Theorem 3.4.4. of \cite{whitt02}, we obtain the desired result
\begin{equation*}
\left\{\sum_{k=1}^{N_{\beta}(nt)}f\left(\frac{T_k}{n}\right)\frac{Y_k}{n^{\beta/\alpha}}\right\}_{t\geqslant 0} \overset{M_1-top}{\Rightarrow}\left\{\int_0^{D_{\beta}^{-1}(t)}f(D_{\beta}(s))dL_{\alpha}(s) \right\}_{t\geqslant 0},
\end{equation*}
as $n\to\infty$.
\noindent Now using the equality with probability one:
\[\int_0^{D_{\beta}^{-1}(t)}f(D_{\beta}(s))dL_{\alpha}(s)\overset{a.s.}{=}\int_0^{t}f(s)dL_{\alpha}(D^{-1}_{\beta}(s)),\]
we conclude that 
\begin{equation*}
\left\{\sum_{k=1}^{N_{\beta}(nt)}f\left(\frac{T_k}{n}\right)\frac{Y_k}{n^{\beta/\alpha}}\right\}_{t\geqslant 0} \overset{M_1-top}{\Rightarrow}\left\{\int_0^{t}f(s)dL_{\alpha}(D^{-1}_{\beta}(s)) \right\}_{t\geqslant 0},
\end{equation*}
when $n\to \infty$.
\end{proof}
 
As a remark, we think that it is possible to extend this considerations to more general subordinators by approximating these subordinators by compound Poisson processes but since we are interested in the stochastic differential equation given in (\ref{stringeq}), it makes sense to consider as a noise term, a formal derivative of an $\alpha$-stable L\'{e}vy process.

Finally, to conclude our work we will see the following result which is a corollary of Theorem \ref{propstocsumconvex2}.
\begin{corollary}
Let $G(t)$ be the Green function of equation (\ref{stringeqsolx}) defined in (\ref{Gt}) and $G_v(t)$ its derivative. 
Let $\{Y_i\}_{i\in\N}$ be i.i.d. symmetric $\alpha$-stable random variables that represent the particle jumps. Assume 
$Y_1$ belongs to the DOA of an $\alpha$-stable random variable $S_{\alpha}$, with $\alpha\in (0,2]$. Let  $\{J_i\}_{i\in \N}$ be 
the waiting times preceding the corresponding jumps. Assume that $\{J_i\}_{i\in \N}$ are i.i.d. such that $J_1$ belongs to the strict 
DOA of some stable random variables with index $\beta\in (0,1)$ and $T_n=\sum_{i=1}^n J_i$.  Then, under the scaling considered in Section \ref{SectionFLTSI},  it follows that
\begin{equation}\label{vcr}
v^n:=\left\{\sum_{i=1}^{N_{\beta}(nt)}G_v\left(t-\frac{T_i}{n}\right)\frac{Y_i}{n^{\beta/\alpha}}\right\}_{t\geqslant 0}\overset{M_1-top}{\Rightarrow}\left\{\int_0^{t}G_v(t-s)dL_{\alpha}(D^{-1}_{\beta}(s))\right\}_{t\geqslant 0}=:v,
\end{equation}
and
\begin{equation}\label{xcr}
x^n:=\left\{\sum_{i=1}^{N_{\beta}(nt)}G \left(t-\frac{T_i}{n}\right)\frac{Y_i}{n^{\beta/\alpha}}\right\}_{t\geqslant 0}\overset{M_1-top}{\Rightarrow}\left\{\int_0^{t}G (t-s)dL_{\alpha}(D^{-1}_{\beta}(s))\right\}_{t\geqslant 0}=:x,
\end{equation}
when $n\to+\infty$, in the Skorokhod space $D([0,+\infty),\R)$ endowed with the $M_1$-topology.
\end{corollary}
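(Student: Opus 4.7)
The integrands $G(t - T_i/n)$ and $G_v(t - T_i/n)$ in the two sums depend on $t$, so Theorem \ref{propstocsumconvex2} does not apply directly because there the integrand $f$ is $t$-independent. My plan is to exploit the $C^\infty$-smoothness of the Green functions visible in \eqref{Gt} and use Stieltjes integration by parts to transfer the $t$-dependence into a smooth deterministic factor, thereby reducing the problem to the already-established Meerschaert--Scheffler convergence \eqref{limitcomp} of the rescaled CTRW itself.

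Concretely, set $X^n(s) := \sum_{i=1}^{N_\beta(ns)} Y_i/n^{\beta/\alpha}$. Direct inspection of the three cases in \eqref{Gt} shows that $G, G_v \in C^1([0,+\infty))$ with $G(0) = 0$; pathwise Stieltjes integration by parts (legal because $X^n$ is a pure jump process and $G(t-\cdot),G_v(t-\cdot)$ have bounded variation) then gives
\begin{equation*}
x^n(t) = \int_0^t G'(t-s)\, X^n(s)\, ds,\qquad v^n(t) = G_v(0)\, X^n(t) + \int_0^t G_v'(t-s)\, X^n(s)\, ds,
\end{equation*}
and the analogous identities hold with $X^n$ replaced by $X_{\alpha,\beta}(s):=L_\alpha(D_\beta^{-1}(s))$, producing the candidate limits $x(t)$ and $v(t)$. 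By \eqref{limitcomp} we have $X^n \Rightarrow X_{\alpha,\beta}$ in the $M_1$-topology on $D([0,+\infty),\R)$, and by Skorokhod's representation theorem I may assume almost-sure $M_1$-convergence on each compact $[0,T]$.

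The crux is to show that, almost surely, the Lebesgue integrals in the decomposition converge uniformly in $t \in [0,T]$. The $M_1$-convergence of $X^n$ on $D([0,T],\R)$ delivers pointwise convergence at each continuity point of $X_{\alpha,\beta}$ (a set of full Lebesgue measure) together with a uniform bound on $\|X^n\|_{[0,T]}$; dominated convergence then yields pointwise convergence of the integrals, and the uniform continuity of $G'$ (respectively $G_v'$) on $[-T,T]$ produces equicontinuity in $t$, upgrading this to uniform convergence on $[0,T]$. For $x^n$ this already gives almost-sure uniform convergence on compacts to the continuous limit $x$, which is stronger than $M_1$-convergence. For $v^n = G_v(0) X^n + (\text{integral term})$, I combine the $M_1$-convergence of $G_v(0) X^n$ with the uniform convergence on compacts of the integral term (whose limit is continuous) by appealing to the continuity of addition in $M_1$ when one summand converges uniformly to a continuous function (Whitt \cite{whitt02}, Section 12.7). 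The two main obstacles I expect are (i) extracting the almost-sure uniform bound $\sup_n \|X^n\|_{[0,T]}<\infty$ from $M_1$-convergence in order to feed dominated convergence, and (ii) citing the correct addition-continuity statement in $M_1$, since the map $(x,y) \mapsto x+y$ is not globally continuous on $D \times D$ in the $M_1$-topology and requires the continuity of one summand.
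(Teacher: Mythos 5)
Your argument is correct, but it is a genuinely different route from the one taken in the paper. The paper's own proof stays inside the machinery of Section \ref{SectionFLTSI}: it applies the Continuous Mapping Theorem twice, first to the map $(x,y)\mapsto x-y$ to get $\{t-T_i/n\}\Rightarrow\{t-D_{\beta}(t)\}$ in the $J_1$-topology, then to the (bounded, continuous) Green function to get $\{G(t-T_i/n)\}\Rightarrow\{G(t-D_{\beta}(t))\}$, and finally invokes ``the steps of the proof of Theorem \ref{thm:MainTHM}'' (the good-sequence/UT argument, the time-change identity, and Lemma \ref{Lemmamodcont}) with this convergent integrand in place of $f(T_i/n)$. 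You instead eliminate the two-parameter difficulty you correctly identify at the outset --- that the integrand depends on the outer time $t$ as well as on the jump epochs --- by a pathwise Stieltjes integration by parts, using $G(0)=0$ and $G,G_v\in C^1$ to rewrite $x^n$ and $v^n$ as Lebesgue convolutions of the smooth kernels $G_v$, $G_v'$ against the rescaled CTRW $X^n$, plus the boundary term $G_v(0)X^n(t)$ for $v^n$. From there only the Meerschaert--Scheffler limit \eqref{limitcomp}, Skorokhod representation, dominated convergence with the locally uniform bound supplied by $M_1$-convergence, and Whitt's addition result (\cite{whitt02}, Corollary 12.7.1, applicable because the integral summand has a continuous limit) are needed; both obstacles you flag are resolvable exactly as you anticipate. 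What your approach buys is a self-contained and arguably tighter treatment of the $t$-dependence of the integrand, which the paper's proof handles only implicitly, and for $x^n$ it even yields almost-sure locally uniform convergence, which is stronger than $M_1$. What it costs is generality: it leans on the specific convolution structure, the smoothness of $G$ and $G_v$ from \eqref{Gt}, and the normalization $G(0)=0$, whereas the paper's route is the one that extends verbatim to other bounded continuous integrands.
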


\begin{proof}
Using the convergence result stated at (\ref{eqstablsubqv}) and the fact that $t$ is a deterministic function independent of $D_{\beta}$, we have that 
\begin{equation*}
\left\{\left(t,\frac{T_i}{n}\right)\right\}_{t\geqslant 0}\overset{J_1-top}{\Rightarrow} \left\{(t,D_{\beta})\right\}_{t\geqslant 0},
\end{equation*}
in the Skorokhod topology, when $n\to\infty$.
If we apply Continuous Mapping Theorem considering the continuous function (see Theorem 4.1. of \cite{whitt80}) defined given $x\in D([0,\infty),\R)$, and $y \in D([0,\infty),\R_+)$  as
 \begin{align*}
\Phi:D([0,\infty),\R)\times D([0,\infty),\R_+)&\rightarrow D([0,\infty),\R) \\
(x,y)\qquad \qquad \qquad & \rightarrow \qquad x-y,
\end{align*}
we have that 
\begin{equation*}
\left\{t-\frac{T_i}{n}\right\}_{t\geqslant 0}\overset{J_1-top}{\Rightarrow} \left\{t-D_{\beta}(t)\right\}_{t\geqslant 0}
\end{equation*}
Now, applying again Continuous Mapping Theorem but taking as a continuous map the Green function $G$, we obtain that 
\begin{equation*}
\left\{G\left(t-\frac{T_i}{n}\right)\right\}_{t\geqslant 0}\overset{J_1-top}{\Rightarrow} \left\{G\left(t-D_{\beta}(t)\right)\right\}_{t\geqslant 0}
\end{equation*}
We know that the Green function is also a bounded function. Hence, following the steps of the proof of Theorem \ref{thm:MainTHM}, we obtain the $M_1$-convergence results presented in (\ref{xcr}) and (\ref{vcr}).
 \end{proof}

\section{Summary and outlook}

In this paper, motivated by the stochastic differential equation \eqref{stringeq}, we have studied the convergence of a class of stochastic integrals 
with respect to the compound fractional Poisson process. It turns out that, under proper scaling hypotheses, these integrals converge to integrals 
with respect to a symmetric $\alpha$-stable process subordinated to the inverse $\beta$-stable subordinator. It is therefore possible to approximate 
some of the integrals discussed in \cite{kobayashi2010} by means of simple Monte Carlo simulations. This will be the subject of a forthcoming 
applied paper. Here, we have focused on stochastic integrals of 
bounded deterministic functions. The extension of this result to the integration of stochastic processes will be a natural development. It will be also of interest to extend the convergence of stochastic integrals with respect to more generic L\'{e}vy processes and more general subordinators.
Another possible extension would be studying the coupled case (the jumps and the waiting times are not necessarily independents) using the techniques described in the papers by \cite{BK2013} and \cite{SH2011}.

\bigskip
\noindent \textbf{Acknowledgment}

{E. S. acknowledges financial support from the Italian grant PRIN 2009 `Finitary and non-finitary probabilistic methods in economics' 2009H8WPX5\_002.
N. V. acknowledges  the partial support from by the Spanish grant MEC-FEDER Ref. MTM2009-08869 from the Direcci\'{o}n General de Investigaci\'{o}n, MEC (Spain).}




\end{document}